\documentclass[a4paper]{amsart}

\usepackage{latexsym}
\usepackage{a4wide}
\usepackage{amscd}
\usepackage{graphics}
\usepackage{amsmath}
\usepackage{amssymb}
\usepackage{mathrsfs}
\usepackage{enumerate}
\usepackage{hyperref}
\usepackage{pgf,tikz}
\usepackage[utf8]{inputenc}

\numberwithin{equation}{section} 

\newcommand{\R}{\mathbb R}

\newcommand{\N}{\mathbb N}

\newcommand{\eps}{\varepsilon}

\newcommand{\nor}[1]{\left\Vert #1 \right\Vert}

\newcommand{\dive }{\mathop{\rm div}}

\newtheorem{theorem}{Theorem}[section]
\newtheorem{corollary}[theorem]{Corollary}
\newtheorem{lemma}[theorem]{Lemma}
\newtheorem{proposition}[theorem]{Proposition}
 \theoremstyle{definition} 
 \newtheorem{definition}[theorem]{Definition}
\newtheorem{example}[theorem]{Example}

 \newtheorem{remark}[theorem]{Remark}

\definecolor{lilla}{rgb}{0.54, 0.17, 0.89}

\begin{document}

\title[]{On simple 
  eigenvalues of the fractional Laplacian under removal of small
  fractional capacity sets}

\author{Laura Abatangelo}
\address{Laura Abatangelo 
\newline \indent Dipartimento di Matematica e Applicazioni, 
Università degli Studi di Milano-Bicocca,
\newline \indent  Via Cozzi 55, 20125 Milano, Italy.}
\email{laura.abatangelo@unimib.it}

\author{Veronica Felli}
\address{Veronica Felli 
\newline \indent Dipartimento di Scienza dei Materiali, Università
degli Studi di Milano-Bicocca,
\newline \indent Via Cozzi 55, 20125 Milano, Italy.}
\email{veronica.felli@unimib.it}

\author{Benedetta Noris}
\address{Benedetta Noris
\newline \indent LAMFA: Laboratoire Amiénois de Mathématique
Fondamentale et Appliquée,
\newline \indent
UPJV Université de Picardie Jules Verne, 33 rue Saint-Leu, 80039 Amiens, France.}
\email{benedetta.noris@u-picardie.fr}

\date{February 19, 2019}

\thanks{L. Abatangelo and V. Felli are partially supported by the PRIN 2015
grant ``Variational methods, with applications to problems in
mathematical physics and geometry''  and the INDAM-GNAMPA
2018 grant ``Formula di monotonia e applicazioni: problemi frazionari
e stabilit\`a spettrale rispetto a perturbazioni del dominio''.
B. Noris is partially supported by the INDAM-GNAMPA group. 
All authors are partially supported by the project ERC Advanced Grant  2013 n. 339958: ``Complex Patterns for Strongly Interacting Dynamical Systems - COMPAT''
}

\begin{abstract}
  We consider the eigenvalue problem for the 
 restricted fractional Laplacian in a bounded domain with  homogeneous Dirichlet boundary conditions.  We introduce the notion of
  fractional capacity for compact subsets, with the property that the
  eigenvalues are not affected by the removal of zero fractional
  capacity sets.  Given a simple eigenvalue, we remove from the domain
  a family of compact sets which are concentrating to a set of zero
  fractional capacity and we detect the asymptotic expansion of the
  eigenvalue variation; this expansion depends on the eigenfunction
  associated to the limit eigenvalue.  Finally, we study the case in
  which the family of compact sets is concentrating to a point.
\end{abstract}

\maketitle

{\bf Keywords.} Fractional Laplacian; Asymptotics of eigenvalues; Fractional capacity.

\medskip 

{\bf MSC classification.} 31C15, 35P20, 35R11 
%
%

\section{Introduction}

In the present paper we consider the eigenvalue problem for the Dirichlet
fractional Laplacian in a bounded domain of $\R^N$. Our aim is to
provide asymptotic estimates of the eigenvalue variation when a small
vanishing 
set is removed.
In this context, the good notion of \emph{smallness}
  ensuring 
stability of the eigenvalue variation is related to the \emph{Gagliardo
fractional capacity}, which generalizes to the fractional setting the condenser capacity
appearing in the framework of the standard Laplace operator, see
Definition \ref{def:s-capacity} below.

In the classical setting of the Dirichlet Laplacian, Rauch and Taylor
\cite{rauch-taylor} observed that the spectrum does not change by
imposing homogeneous Dirichlet conditions on a compact polar subset,
i.e. on a subset of zero Newtonian capacity. Courtois \cite{courtois}
developed a perturbation theory for the Dirichlet spectrum of a
domain with small holes, with the capacity of holes
playing the role of a perturbation parameter. More precisely, in
\cite{courtois} it is proved that, if $K\subset \Omega$ is a compact
set, the $N$-th Dirichlet eigenvalue of the Laplacian in
$\Omega\setminus K$ is close to the $N$-th Dirichlet eigenvalue of the
Laplacian in $\Omega$ if (and only if) the capacity of the 
removed set $K$ in $\Omega$ is close to zero; furthermore, if the
capacity of $K$ is small, then the eigenvalue variation is even
differentiable with respect to the capacity of $K$ in $\Omega$. In
\cite{AFHL} asymptotic estimates for such eigenvalue variation were
obtained, highlighting a sharp relation between the order of vanishing
of an eigenfunction of the Dirichlet Laplacian at a point and the
leading term of the asymptotic expansion of the eigenvalue, as a
removed compact set concentrates at that point.  We also mention
\cite{Bertrand-Colbois,besson,Chavel-Feldman,flucher,Ozawa} for
related estimates of the eigenvalue variation for the Laplacian under removal of small
sets.

In order to formulate our problem, let us first introduce a suitable functional setting.
Let $\Omega\subset\R^N$, $N\geq1$, be an open set (bounded or unbounded). 
For $s\in(0,\min\{1,N/2\})$, 
we define the homogeneous fractional Sobolev space
  $\mathcal D^{s,2}(\Omega)$ as the completion of $C^\infty_c(\Omega)$
  with respect to the Gagliardo norm 
\[
[u]_{H^s(\R^N)}=\left(\int_{\R^N}\int_{\R^N}
  \frac{|u(x)-u(y)|^2}{|x-y|^{N+2s}} \,dx\,dy\right)^\frac{1}{2}. 
\]
We note that $\mathcal D^{s,2}(\Omega)\hookrightarrow \mathcal
D^{s,2}(\R^N)$ continuously by trivial extension. 
$\mathcal D^{s,2}(\Omega)$ is a Hilbert space with the scalar product 
\begin{equation}\label{eq:21}
  (u,v)_{\mathcal
    D^{s,2}(\Omega)}=\frac{C(N,s)}2\int_{\R^{N}}\int_{\R^{N}}\frac{(u(x)-u(y))(v(x)-v(y))}{|x-y|^{N+2s}}\,dx\,dy
  =\int_{\R^N}
  |\xi|^{2s}\overline{\widehat v(\xi) }\widehat u(\xi)\,d\xi,
\end{equation}
and the associated norm 
\[
\|u\|_{\mathcal D^{s,2}(\Omega)}=(u,u)_{\mathcal
    D^{s,2}(\Omega)}^{1/2}=\sqrt{\tfrac{C(N,s)}2}\,[u]_{H^s(\R^N)},
\]
where 
\begin{equation}\label{eq:14}
C(N,s)=\pi^{-\frac
  N2}2^{2s}\frac{\Gamma\big(\frac{N+2s}{2}\big)}{\Gamma(2-s)}s(1-s),
\end{equation}
$\Gamma$ is the Gamma function, and 
$\widehat{u}$ denotes the unitary Fourier transform of $u$.

We observe that, if $\Omega$ is bounded, then an equivalent norm on
$\mathcal D^{s,2}(\Omega)$ is 
\[
\|u\|_{L^2(\Omega)}+[u]_{H^s(\R^N)},
\]
see \cite[Corollary 5.2]{brasco-cinti}. As observed in
\cite{Brasco-Lindgren-Parini,brasco-salort}, in  general the space
$\mathcal D^{s,2}(\Omega)$ is smaller than the space $H^s_0(\Omega)$
defined as the closure of $C^\infty_c(\Omega)$
  with respect to the norm 
\[
\|u\|_{H^s(\Omega)}=
\|u\|_{L^2(\Omega)}
+[u]_{H^s(\Omega)}
\]
where 
\[
[u]_{H^s(\Omega)}=\left(
\int_\Omega\int_\Omega \frac{|u(x)-u(y)|^2}{|x-y|^{N+2s}} \,dx\,dy\right)^\frac{1}{2}.
\]
The two spaces $\mathcal D^{s,2}(\Omega)$ and $H^s_0(\Omega)$ 
coincide when $\Omega$ is a bounded Lipschitz open set and
$s\neq1/2$, see \cite[Proposition B.1]{Brasco-Lindgren-Parini}.
Furthermore, defining $H^s(\Omega)$ as the space $\left\{ u\in
  L^2(\Omega):[u]_{H^s(\Omega)}<+\infty\right\}$ endowed with the norm
$\|u\|_{H^s(\Omega)}=\|u\|_{L^2(\Omega)}+[u]_{H^s(\Omega)}$
and $\widetilde H^s(\Omega)$ as the space of  $H^s(\R^N)$-functions
that are zero in $\R^N\setminus\Omega$, it is known that, if $\Omega$ is
bounded and Lipschitz, then 
\[
H^s_0(\Omega)=\widetilde H^s(\Omega) \quad\text{if }s\neq\frac12
\]
and 
\[
H^s_0(\Omega)=\widetilde H^s(\Omega)=H^s(\Omega) \quad\text{if }s<\frac12,
\]
see \cite[Corollary 1.4.4.5]{grisvard}.

A key role in the perturbation theory we are going to develop for singularly perturbed
fractional eigenvalue problems is played by the \emph{Gagliardo
  fractional capacity}.

\begin{definition}\label{def:s-capacity}
Let $\Omega\subset\R^N$ be a bounded open set. Let $K\subset\Omega$ be
a compact set and let $\zeta_K\in C^\infty_c(\Omega)$ be such that
$\zeta_K=1$ in a neighborhood of $K$. For every $s\in
(0,\min\{1,N/2\})$, 
we define the \emph{Gagliardo $s$-fractional capacity} of $K$ in $\Omega$ as
\[
\mathop{\rm Cap}\nolimits^s_\Omega(K)=\inf \left\{
\|u\|_{\mathcal D^{s,2}(\Omega)}^2: \, u\in \mathcal D^{s,2}(\Omega)  \text{ and }  
u-\zeta_K\in \mathcal D^{s,2}(\Omega\setminus K) \right\}.
\]
\end{definition}
The Gagliardo $s$-capacity was introduced and studied in several
recent papers. We refer e.g. to \cite[Appendix A]{ritorto} for some
basic properties of the $s$-capacity; we also mention 
\cite{adams,Adams-Xiao,shi-xiao,Warma15} for some related notions of
fractional capacity.

From now on $\Omega\subset\R^N$ will denote a bounded open set.
We consider the following eigenvalue problem with
  homogeneous Dirichlet boundary conditions for the \emph{restricted fractional Laplacian}:
\begin{equation}\label{eq:eigenvalue_main}
\begin{cases}
(-\Delta)^s u=\lambda u, \quad& \text{in } \Omega, \\
u=0, &\text{in }\R^N\setminus\Omega.
\end{cases}
\end{equation}
We refer to Section \ref{sec:preliminaries} for a quick review of the definition and main properties of the fractional Laplacian $(-\Delta)^s$.
We say that $\lambda\in\R$ is an eigenvalue of problem
\eqref{eq:eigenvalue_main} 
if there exists some $u\in \mathcal
  D^{s,2}(\Omega)\setminus\{0\}$
 (called eigenfunction) such that
\[
 (u,v)_{\mathcal
    D^{s,2}(\Omega)}=\lambda \int_{\R^N} u(x)v(x) \,dx, \quad
\text{for all }  
v\in \mathcal D^{s,2}(\Omega).
\]
Since $(-\Delta)^s$ is a self-adjoint operator on
  $L^2(\Omega)$ with compact inverse, the Spectral Theorem implies
that the eigenvalues
 have finite multiplicity and form a diverging sequence
\[
0< \lambda_1^s(\Omega)\leq \lambda_2^s(\Omega) \leq \lambda_3^s(\Omega)\leq \ldots \to+\infty.
\]
We notice that, in contrast with the local case, a
  connectedness assumption on the domain $\Omega$ would lead to some
  loss of generality. Indeed, in the classical case the spectrum of the
  Dirichlet Laplacian in a disconnected domain is the union of the
  spectra on the connected components, whereas in the fractional case
  the spectrum is influenced by the mutual position of the connected
  components due to the nonlocal effects, see \cite[\S 2.3]{BP}.

We shall consider the eigenfunctions normalized as follows
\begin{equation}\label{eq:normalization}
\int_{\Omega}|u_j(x)|^2\,dx =1.
\end{equation}
Our first result
is the fractional counterpart of \cite[Theorem
  1.1]{courtois} and establishes
the continuity of the eigenvalue variation under
the removal of small fractional capacity sets. 
\begin{theorem}\label{thm:continuity}
Let $\Omega\subset\R^N$ be a bounded open set.
For $s\in (0,\min\{1,N/2\})$, $K\subset\Omega$ compact  and $k\in\N_*$, let
$\lambda_k^s(\Omega)$, respectively $\lambda_k^s(\Omega\setminus K)$, be the $k$-th eigenvalue of problem
\eqref{eq:eigenvalue_main} in
$\Omega$, respectively $\Omega\setminus K$. There exist $C>0$ and $\delta>0$
(independent of $K$) such that, if $\mathop{\rm
  Cap}\nolimits^s_\Omega(K)\leq\delta$, then 
\[
0\leq \lambda_k^s(\Omega\setminus K)-\lambda_k^s(\Omega)\leq C\left(\mathop{\rm Cap}\nolimits^s_\Omega(K)\right)^{1/2}.
\]
In particular we have that $\lambda_k^s(\Omega\setminus
K)\to\lambda_k^s(\Omega)$ as $\mathop{\rm Cap}\nolimits^s_\Omega(K)\to 0$.
\end{theorem}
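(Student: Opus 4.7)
The plan is to use the variational min–max characterisation
\[
\lambda_k^s(\mathcal O)=\min_{\substack{W\subset\mathcal D^{s,2}(\mathcal O)\\ \dim W=k}}\;\max_{u\in W\setminus\{0\}}\frac{\|u\|^2_{\mathcal D^{s,2}(\mathcal O)}}{\|u\|^2_{L^2(\mathcal O)}}
\]
together with a capacitary cut-off of the first $k$ eigenfunctions in $\Omega$, adapting Courtois's classical strategy \cite{courtois} to the nonlocal setting. The lower bound $\lambda_k^s(\Omega)\le\lambda_k^s(\Omega\setminus K)$ is immediate: extension by zero realises $\mathcal D^{s,2}(\Omega\setminus K)\hookrightarrow\mathcal D^{s,2}(\Omega)$ as an isometric embedding for the Gagliardo norm, and the minimum over a smaller family of admissible test subspaces can only grow.

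For the upper bound, let $u_1,\ldots,u_k$ denote the first $k$ eigenfunctions of \eqref{eq:eigenvalue_main} normalised by \eqref{eq:normalization}; by the boundary regularity theory for the restricted fractional Laplacian, $u_j\in L^\infty(\R^N)$. Pick a near-optimal capacitary potential $V\in\mathcal D^{s,2}(\Omega)$: by Definition~\ref{def:s-capacity}, together with a density argument and a $[0,1]$-truncation, it can be chosen with $0\le V\le 1$, $V\equiv 1$ in a neighbourhood of $K$, and $\|V\|_{\mathcal D^{s,2}(\Omega)}^2\le 2\mathop{\rm Cap}\nolimits^s_\Omega(K)$. Define $\tilde u_j:=u_j(1-V)\in\mathcal D^{s,2}(\Omega\setminus K)$; the latter inclusion follows from the boundedness of $u_j$ and $V$, the fractional product estimate $\|fg\|_{\mathcal D^{s,2}}^2\le C(\|f\|_\infty^2[g]_{H^s}^2+\|g\|_\infty^2[f]_{H^s}^2)$, and the fact that $1-V$ vanishes near $K$.

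The core of the proof is to estimate the Rayleigh quotient of a generic combination. For $c\in\mathbb S^{k-1}$ and $v:=\sum_j c_j u_j$, so that $\|v\|_{L^2(\Omega)}=1$ and $\|v\|_{\mathcal D^{s,2}(\Omega)}^2\le\lambda_k^s(\Omega)$, the denominator is bounded below by
\[
\|v(1-V)\|_{L^2(\Omega)}^2=1-2\int_\Omega v^2V+\int_\Omega v^2V^2\ge 1-C\sqrt{\mathop{\rm Cap}\nolimits^s_\Omega(K)}
\]
via the fractional Poincar\'e inequality $\|V\|_{L^2(\Omega)}\le C\|V\|_{\mathcal D^{s,2}(\Omega)}$ on the bounded $\Omega$. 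The numerator estimate comes from squaring and integrating, with kernel $|x-y|^{-N-2s}$, the pointwise identity
\[
v(1-V)(x)-v(1-V)(y)=(1-V(x))(v(x)-v(y))-v(y)(V(x)-V(y));
\]
of the three resulting contributions the first is $\le\|v\|_{\mathcal D^{s,2}(\Omega)}^2\le\lambda_k^s(\Omega)$ using $(1-V)^2\le 1$, the third is $\le\|v\|_\infty^2\|V\|_{\mathcal D^{s,2}(\Omega)}^2\le C\mathop{\rm Cap}\nolimits^s_\Omega(K)$, and the cross-term is controlled by Cauchy--Schwarz for the weighted double integral, giving $2\|v\|_\infty[v]_{H^s}[V]_{H^s}\le C\sqrt{\mathop{\rm Cap}\nolimits^s_\Omega(K)}$. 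An analogous Gram-matrix computation shows $\tilde u_1,\ldots,\tilde u_k$ are linearly independent for small capacity, so the min--max applied to $\mathrm{span}\{\tilde u_j\}$ yields $\lambda_k^s(\Omega\setminus K)\le\lambda_k^s(\Omega)+C\sqrt{\mathop{\rm Cap}\nolimits^s_\Omega(K)}$ as soon as $\mathop{\rm Cap}\nolimits^s_\Omega(K)\le\delta$ is small enough to keep $1-C\sqrt{\mathop{\rm Cap}\nolimits^s_\Omega(K)}$ bounded away from zero.

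The main obstacle is the nonlocal Leibniz manipulation above: in the local case $s=1$ the analogous cross-term can be reduced, via integration by parts using the equation $-\Delta V=0$ in $\Omega\setminus K$ satisfied by the capacitary potential, to a quantity linear in $\mathop{\rm Cap}\nolimits(K)$, yielding Courtois's sharper rate; in the fractional setting no such PDE-based manipulation is available on the off-diagonal double integral, so one must resort to Cauchy--Schwarz at the level of the Gagliardo seminorms, which precisely accounts for the square-root rate $\sqrt{\mathop{\rm Cap}\nolimits^s_\Omega(K)}$ in the statement. A secondary technicality is producing $V$ with the four combined properties (boundedness in $[0,1]$, $V\equiv 1$ near $K$, near-optimal $\mathcal D^{s,2}$-norm, and enough regularity for the product to lie in $\mathcal D^{s,2}(\Omega\setminus K)$); this is handled by approximating the capacitary potential by $\zeta_K+\varphi_n$ with $\varphi_n\in C^\infty_c(\Omega\setminus K)$ and re-truncating.
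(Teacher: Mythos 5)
Your proof is correct, and the overall strategy is the same as the paper's (and Courtois's): multiply the first $k$ eigenfunctions by $1-V$ with $V$ a $[0,1]$-valued capacitary potential, check that the two Gram matrices are $O\bigl((\mathop{\rm Cap}\nolimits^s_\Omega(K))^{1/2}\bigr)$-close to the unperturbed ones, and conclude by the min--max principle; the lower bound by inclusion of test spaces is also how the paper gets it. The technical execution differs, though: the paper transfers everything to the Caffarelli--Silvestre extension, works with the local weighted form $q(U,V)=\int_{\R^{N+1}_+}t^{1-2s}\nabla U\cdot\nabla V$, uses the classical Leibniz rule for gradients, and tests the weak equation for $U_j$ against $U_\ell(1-V_{\Omega,K})^2$ to isolate the diagonal term $\kappa_s\lambda_j^s(\Omega)\delta_{j\ell}$, estimating the remaining cross terms by Cauchy--Schwarz (so it too only reaches the $\sqrt{\mathop{\rm Cap}}$ rate, contrary to what your closing remark might suggest is special to the nonlocal form). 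You instead stay entirely at the level of the Gagliardo double integral, use the discrete product identity and bound the leading term by $(1-V)^2\le 1$ applied to the generic combination $v=\sum_jc_ju_j$, which bypasses the equation altogether; this is slightly more elementary and self-contained, at the price of having to justify directly that $u_j(1-V)\in\mathcal D^{s,2}(\Omega\setminus K)$ (your sketch via approximation of $V-\zeta_K$ by $C^\infty_c(\Omega\setminus K)$ functions and the product estimate is adequate, but should be written out, as should the $L^\infty$ bound on $u_j$, which the paper proves in Lemma~\ref{l:bk} rather than quoting regularity theory). Both routes are valid and yield the stated estimate.
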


Let us now consider a family of compact sets
concentrating to a set of zero capacity with the goal
  of detecting the leading term of the asymptotic expansion of the eigenvalue variation.

\begin{definition}\label{def:concentrates}
Let $\Omega\subset\R^N$ be a bounded open set.
Let $\{K_\varepsilon\}_{\varepsilon>0}$ be a family of compact sets contained in $\Omega$. 
We say that $K_\varepsilon$ is concentrating to a compact set $K\subset\Omega$ if for every open set $\omega$ such that $K\subset\omega\subseteq \Omega$ there exists $\varepsilon_\omega>0$ such that $K_\varepsilon\subset\omega$ for every $0<\varepsilon<\varepsilon_\omega$.
\end{definition}
We note that the limit set $K$ appearing in the previous definition
could be not unique. We comment on this definition in
Appendix \ref{sec:app_B}, where in particular we discuss the
relation between Definition \ref{def:concentrates} and the
classical notion of
convergence of sets in the sense of Mosco.

To state our main results in this direction, we need to introduce the
notion of fractional 
$u$-capacity for a function 
$u\in \mathcal D^{s,2}(\Omega)$.

\begin{definition}\label{def:U-capacity}
  Let $\Omega\subset\R^N$ be a bounded open set, $K\subset\Omega$ a
  compact set and $s\in (0,\min\{1,N/2\})$.  For every
  $u\in \mathcal D^{s,2}(\Omega)$, we define the \emph{$s$-fractional
    $u$-capacity} of $K$ in $\Omega$ as
\begin{equation}\label{eq:U_capacity_def}
\mathop{\rm Cap}\nolimits^s_\Omega(K,u)=
\inf \left\{
\|w\|_{\mathcal D^{s,2}(\Omega)}^2: \, w\in \mathcal D^{s,2}(\Omega)  \text{ and }  
w-u\in \mathcal D^{s,2}(\Omega\setminus K) \right\}.
\end{equation}
\end{definition}
More generally, we can define the fractional relative
$u$-capacity for every function $u\in
  H^s_{\rm loc}(\Omega)$. Indeed, letting $\zeta_K\in C^\infty_c(\Omega)$ be as
  in Definition \ref{def:s-capacity}, we have that $\zeta_Ku\in
  \mathcal{D}^{s,2}(\Omega)$, so that we can define
\begin{equation*}
\mathop{\rm Cap}\nolimits^s_\Omega(K,u)=
\inf \left\{
\|w\|_{\mathcal D^{s,2}(\Omega)}^2: \, w\in \mathcal D^{s,2}(\Omega)  \text{ and }  
w-\zeta_K u\in \mathcal D^{s,2}(\Omega\setminus K) \right\}.
\end{equation*}

The following theorem provides 
a sharp asymptotic expansion of the eigenvalue variation under removing
of a family
of compact sets concentrating to a zero fractional capacity set. 
In the classical setting of the Dirichlet Laplacian an analogous
result can be found in \cite[Theorem 1.4]{AFHL}, see also the proof of
\cite[Theorem 1.2]{courtois}.

\begin{theorem}\label{thm:expansion}
Let $\Omega\subset\R^N$ be a bounded open set.
For $s\in (0,\min\{1,N/2\})$ and $j\in\N_*$, let 
$\lambda_j^s(\Omega)$ be the $j$-th eigenvalue of
\eqref{eq:eigenvalue_main}.
Let $\{K_\varepsilon\}_{\varepsilon>0}$ be a family of compact sets contained in $\Omega$ concentrating to a compact set $K\subset\Omega$ in the sense of Definition \ref{def:concentrates}. If
\begin{equation*}
\lambda_j^s(\Omega)\text{ is simple} \qquad\text{and}\qquad 
\mathop{\rm Cap}\nolimits^s_\Omega(K)=0
\end{equation*}
then
\begin{equation}\label{eq:expansion_eigenv}
\lambda_j^s(\Omega\setminus K_\varepsilon)-\lambda_j^s(\Omega)=
\mathop{\rm Cap}\nolimits^s_\Omega(K_\varepsilon,u_j)
+o(\mathop{\rm Cap}\nolimits^s_\Omega(K_\varepsilon,u_j)),
\end{equation}
as $\varepsilon\to0^+$, where  $u_j \in \mathcal D^{s,2}(\Omega)$ is
an eigenfunction associated  to $\lambda_j^s(\Omega)$ normalized as in
\eqref{eq:normalization}. 
\end{theorem}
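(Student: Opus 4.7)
The plan is to establish matching upper and lower bounds for $\lambda_j^s(\Omega\setminus K_\varepsilon)-\lambda_j^s(\Omega)$ with common leading term $\mathop{\rm Cap}\nolimits^s_\Omega(K_\varepsilon,u_j)$. The central object throughout is the capacitary potential $V_\varepsilon\in \mathcal{D}^{s,2}(\Omega)$ attaining the infimum in \eqref{eq:U_capacity_def}. Its Euler--Lagrange characterization, $V_\varepsilon-u_j\in \mathcal{D}^{s,2}(\Omega\setminus K_\varepsilon)$ and $(V_\varepsilon,\phi)_{\mathcal{D}^{s,2}(\Omega)}=0$ for every $\phi\in \mathcal{D}^{s,2}(\Omega\setminus K_\varepsilon)$, makes $u_j=V_\varepsilon+(u_j-V_\varepsilon)$ a $\mathcal{D}^{s,2}$-orthogonal splitting; in particular $\|V_\varepsilon\|^2_{\mathcal{D}^{s,2}(\Omega)}=\mathop{\rm Cap}\nolimits^s_\Omega(K_\varepsilon,u_j)$, and testing the eigenfunction equation for $u_j$ against $V_\varepsilon$ yields the key identity $(u_j,V_\varepsilon)_{L^2(\Omega)}=\mathop{\rm Cap}\nolimits^s_\Omega(K_\varepsilon,u_j)/\lambda_j^s(\Omega)$.

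The major technical step, which I expect to be the hardest part, is the $L^2$-smallness estimate
\[
\|V_\varepsilon\|^2_{L^2(\Omega)}=o\bigl(\mathop{\rm Cap}\nolimits^s_\Omega(K_\varepsilon,u_j)\bigr)\quad\text{as }\varepsilon\to 0^+.
\]
I would prove it by a blow-up/compactness argument: the renormalized potential $W_\varepsilon=V_\varepsilon/\|V_\varepsilon\|_{\mathcal{D}^{s,2}(\Omega)}$ is bounded in $\mathcal{D}^{s,2}(\Omega)$, hence, up to a subsequence, converges weakly in $\mathcal{D}^{s,2}(\Omega)$ and strongly in $L^2(\Omega)$ (by compactness of the Sobolev embedding on the bounded set $\Omega$) to some $W$. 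For every $\phi\in C^\infty_c(\Omega\setminus K)$, Definition \ref{def:concentrates} gives $\mathrm{supp}\,\phi\cap K_\varepsilon=\emptyset$ for small $\varepsilon$, hence $\phi\in \mathcal{D}^{s,2}(\Omega\setminus K_\varepsilon)$ and $(W_\varepsilon,\phi)_{\mathcal{D}^{s,2}(\Omega)}=0$ by orthogonality; passing to the limit, $(W,\phi)_{\mathcal{D}^{s,2}(\Omega)}=0$ for every such $\phi$. The hypothesis $\mathop{\rm Cap}\nolimits^s_\Omega(K)=0$ enters here as a removability/density statement: $\mathcal{D}^{s,2}(\Omega\setminus K)$ is dense in $\mathcal{D}^{s,2}(\Omega)$. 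This is the fractional analogue of the classical polar-set removability, and establishing it carefully in the nonlocal setting is the main obstacle. Granted this density, the orthogonality extends to all of $\mathcal{D}^{s,2}(\Omega)$, and testing against $W$ itself forces $W=0$; the strong $L^2$ convergence then gives the claim.

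For the upper bound I would apply Courant--Fischer to $\lambda_j^s(\Omega\setminus K_\varepsilon)$ with the $j$-dimensional test subspace $\mathrm{span}(u_i-V_\varepsilon^i)_{i=1}^j\subset \mathcal{D}^{s,2}(\Omega\setminus K_\varepsilon)$, where $V_\varepsilon^i$ is the capacitary potential associated with $u_i$. Using the identity $(u_i,V_\varepsilon^k)_{L^2(\Omega)}=\lambda_i^{-1}(V_\varepsilon^i,V_\varepsilon^k)_{\mathcal{D}^{s,2}(\Omega)}$ (derived by the same testing trick) one computes explicitly the Gram matrices
\[
A_{ik}=\lambda_i\delta_{ik}-(V_\varepsilon^i,V_\varepsilon^k)_{\mathcal{D}^{s,2}(\Omega)},\qquad B_{ik}=\delta_{ik}-(\lambda_i^{-1}+\lambda_k^{-1})(V_\varepsilon^i,V_\varepsilon^k)_{\mathcal{D}^{s,2}(\Omega)}+(V_\varepsilon^i,V_\varepsilon^k)_{L^2(\Omega)}.
\]
A Taylor expansion of the diagonal Rayleigh ratio $A_{jj}/B_{jj}$, combined with the $L^2$-smallness from the previous step, gives $\lambda_j^s(\Omega)+\mathop{\rm Cap}\nolimits^s_\Omega(K_\varepsilon,u_j)+o(\mathop{\rm Cap}\nolimits^s_\Omega(K_\varepsilon,u_j))$; the off-diagonal contributions to the largest generalized eigenvalue of $(A,B)$ are of order $o(\mathop{\rm Cap}\nolimits^s_\Omega(K_\varepsilon,u_j))$ by a standard matrix perturbation estimate, since the spectral gap is positive thanks to the simplicity of $\lambda_j^s(\Omega)$.

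For the lower bound, let $u_\varepsilon$ be the $L^2$-normalized $j$-th eigenfunction on $\Omega\setminus K_\varepsilon$; by Theorem \ref{thm:continuity} and analytic perturbation theory for the simple eigenvalue $\lambda_j^s(\Omega)$, up to a sign $u_\varepsilon\to u_j$ in $\mathcal{D}^{s,2}(\Omega)$ with quantitative rate $\|u_\varepsilon-u_j\|_{L^2(\Omega)}=O\bigl(\lambda_j^s(\Omega\setminus K_\varepsilon)-\lambda_j^s(\Omega)\bigr)=O\bigl(\mathop{\rm Cap}\nolimits^s_\Omega(K_\varepsilon,u_j)\bigr)$ once the upper bound is in hand. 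Testing the eigenfunction equation for $u_\varepsilon$ against $u_j-V_\varepsilon\in \mathcal{D}^{s,2}(\Omega\setminus K_\varepsilon)$ and using the eigenfunction equation for $u_j$ together with $(u_\varepsilon,V_\varepsilon)_{\mathcal{D}^{s,2}(\Omega)}=0$ yields the algebraic identity
\[
\bigl(\lambda_j^s(\Omega\setminus K_\varepsilon)-\lambda_j^s(\Omega)\bigr)(u_\varepsilon,u_j)_{L^2(\Omega)}=\lambda_j^s(\Omega\setminus K_\varepsilon)\,(u_\varepsilon,V_\varepsilon)_{L^2(\Omega)}.
\]
Writing $u_\varepsilon=u_j+\psi_\varepsilon$ and using $(u_j,V_\varepsilon)_{L^2(\Omega)}=\mathop{\rm Cap}\nolimits^s_\Omega(K_\varepsilon,u_j)/\lambda_j^s(\Omega)$ decomposes $(u_\varepsilon,V_\varepsilon)_{L^2(\Omega)}=\mathop{\rm Cap}\nolimits^s_\Omega(K_\varepsilon,u_j)/\lambda_j^s(\Omega)+(\psi_\varepsilon,V_\varepsilon)_{L^2(\Omega)}$, and Cauchy--Schwarz combined with the $L^2$-smallness of the second step and the rate for $\|\psi_\varepsilon\|_{L^2}$ makes $(\psi_\varepsilon,V_\varepsilon)_{L^2(\Omega)}=o(\mathop{\rm Cap}\nolimits^s_\Omega(K_\varepsilon,u_j))$. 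Dividing by $(u_\varepsilon,u_j)_{L^2(\Omega)}=1+o(1)$ produces the lower bound matching the upper one and concludes \eqref{eq:expansion_eigenv}.
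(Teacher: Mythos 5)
Your architecture is sound and close in spirit to the paper's: the key identity $(u_j,V_\varepsilon)_{L^2(\Omega)}=\lambda_j^s(\Omega)^{-1}\mathop{\rm Cap}\nolimits^s_\Omega(K_\varepsilon,u_j)$ is exactly the paper's \eqref{eq:cap_equiv}, your $L^2$-smallness estimate is the paper's Lemma \ref{lem:trace_o_capacity} (proved by the same contradiction argument, with the density statement $\mathcal D^{s,2}(\Omega\setminus K)=\mathcal D^{s,2}(\Omega)$ being Proposition \ref{prop:cap0}, already available), and your ``lower bound'' identity $(\lambda_{j,\varepsilon}-\lambda_j^s(\Omega))(u_\varepsilon,u_j)_{L^2}=\lambda_{j,\varepsilon}(u_\varepsilon,V_\varepsilon)_{L^2}$ is the paper's Step~3. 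Your upper bound via Courant--Fischer with the test space ${\rm span}(u_i-V_\varepsilon^i)$ is a genuinely different (and more classical, \`a la Courtois/AFHL) route than the paper's abstract spectral-theorem argument; it can be made to work, but note that the ``standard matrix perturbation estimate'' must exploit the structural bound $|(V_\varepsilon^i,V_\varepsilon^k)_{\mathcal D^{s,2}}|\leq\sqrt{c_ic_k}$ with $c_i=\mathop{\rm Cap}\nolimits^s_\Omega(K_\varepsilon,u_i)$: the capacities $c_i$ for $i<j$ may be of much larger order than $c_j$, so a bound on the perturbation only in operator norm gives errors $O(\max_i c_i^2)$, which need not be $o(c_j)$; you must track that every entry in row and column $j$ of the perturbation carries a factor $\sqrt{c_j}$.

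The genuine gap is in the lower bound, and it is precisely the part that constitutes the bulk of the paper's proof (Steps 1 and 2). You assert $\|u_\varepsilon-u_j\|_{L^2(\Omega)}=O\bigl(\lambda_j^s(\Omega\setminus K_\varepsilon)-\lambda_j^s(\Omega)\bigr)=O(c_j)$ ``by analytic perturbation theory''. There is no analytic family here --- the perturbation is a singular domain perturbation --- and the claimed rate is almost certainly false: the eigenfunction moves by $\|V_\varepsilon\|_{\mathcal D^{s,2}}\sim\sqrt{c_j}$ in energy norm and by $o(\sqrt{c_j})$ in $L^2$, which is in general much larger than $c_j$. What your final Cauchy--Schwarz step actually needs is $\|u_\varepsilon-u_j\|_{L^2(\Omega)}=O(\sqrt{c_j})$ (so that, combined with $\|V_\varepsilon\|_{L^2}=o(\sqrt{c_j})$, the cross term is $o(c_j)$), and \emph{this} is a nontrivial quantitative statement: it is obtained in the paper by showing that the residual $\|A_\varepsilon\psi_\varepsilon-\mu_{j,\varepsilon}\psi_\varepsilon\|$ of the approximate eigenfunction $\psi_\varepsilon=u_j-V_\varepsilon$ is $o(\sqrt{c_j})$ and by using a spectral gap, uniform in $\varepsilon$, around the simple eigenvalue $\mu_{j,\varepsilon}$ of the compact operator $A_\varepsilon$ (this uniformity itself requires Theorem \ref{thm:continuity} applied to neighbouring eigenvalues). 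Without this resolvent/spectral-gap argument your proof of the matching lower bound is incomplete; with it, the Courant--Fischer upper bound becomes redundant, since the identity you derive is an equality and yields the expansion in both directions at once.
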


We can estimate the asymptotic behavior of the $s$-fractional
$u_j$-capacity as the family of compact sets $K_\varepsilon$
concentrates to a point, by exploiting some of the results in
\cite{FallFelli14}. Without loss of generality, we can assume that the
limit point is the origin, hence in the following we suppose that
$0\in\Omega$, with $\Omega$ being a bounded open set in $\R^N$. We
study the asymptotic behaviour of the quantity
$\mathop{\rm Cap}\nolimits^s_\Omega(K_\varepsilon,u_j)$ when
$K_\eps=\eps K$ for a given compact set $K\subset\R^N$ and
$\varepsilon\to0^+$. We observe that the family of compact sets
$\{\eps K\}_{\eps>0}$ concentrates (in the sense of Definition
\ref{def:concentrates}) to the singleton $\{0\}$, which has zero
$s$-capacity in $\Omega$ (see Example \ref{ex:singleton} ahead).

For $s\in (0,\min\{1,N/2\})$ and $j\in\N_*$, let $\lambda_j^s(\Omega)$
be the $j$-th eigenvalue of problem \eqref{eq:eigenvalue_main} and let
$u_j\in \mathcal D^{s,2}(\Omega)$ be 
an eigenfunction associated  to $\lambda_j^s(\Omega)$ normalized as in
\eqref{eq:normalization}.
In view of \cite{FallFelli14}, the asymptotic behavior of $u_j$
 at $0$
can be described in terms of 
the eigenvalues and the eigenfunctions of
 the following eigenvalue problem
\begin{align}\label{eq:eigsp}
  \begin{cases}
    -\dive\nolimits_{{\mathbb S}^{N}}(\theta_{N+1}^{1-2s}\nabla_{{\mathbb
        S}^{N}}\psi)=\mu\,
    \theta_{N+1}^{1-2s}\psi, &\text{in }{\mathbb S}^{N}_+,\\[5pt]
-\lim_{\theta_{N+1}\to 0^+} \theta_{N+1}^{1-2s}\nabla_{{\mathbb
    S}^{N}}\psi\cdot {\mathbf
  e}_{N+1}=0,&\text{on }\partial {\mathbb S}^{N}_+,
  \end{cases}
\end{align}
where ${\mathbb S}^{N}_+$ is the $N$-dimensional half-sphere
\begin{align*}
  &{\mathbb S}^{N}_+=\{(\theta_1,\theta_2,\dots, \theta_{N+1})\in
  {\mathbb S}^{N}:\theta_{N+1}>0\}=\left\{\tfrac{z}{|z|}:z\in \R^{N+1},\ z\cdot {\mathbf e}_{N+1}>0\right\},
\end{align*}
with ${\mathbf e}_{N+1}=(0,\dots,0,1)\in\R^{N+1}$. From classical
spectral theory,
problem \eqref{eq:eigsp} admits a diverging sequence of real eigenvalues
with finite multiplicity
\begin{equation*}
  \mu_1^s\leq\mu_2^s\leq\cdots\leq\mu_k^s\leq\cdots
\end{equation*}
Moreover $\mu_1^s=0$ and it is simple, i.e. $\mu_1^s<\mu_2^s$. We note
that, for $s=\frac12$, by reflection eigenfunctions of
\eqref{eq:eigsp} are spherical harmonics; then
$\{\mu_k^{1/2}:k\geq1\}=\{(N+k-2)(k-1):k\geq1\}$ and eigenfunctions
associated to the eigenvalue $(N+k-2)(k-1)$ are spherical harmonics of
degree $k-1$.

From \cite[Theorem 4.1 and Lemma 4.2]{FallFelli14} there exist $k_0\geq1$ and
$\psi\not\equiv0$ eigenfunction of problem \eqref{eq:eigsp} associated
to the eigenvalue $\mu_{k_0}^s$ such that, letting
\begin{equation}\label{eq:gamma_s_def}
\gamma_s=-\frac{N-2s}{2}+\sqrt{\left(\frac{N-2s}{2}\right)^2+\mu_{k_0}^s},
\end{equation}
it holds
\begin{equation}\label{eq:17}
\tilde u_\varepsilon(x):=\varepsilon^{-\gamma_s} u_j(\varepsilon x)\to 
\hat\psi(x)\quad\text{in } H^s(B'_R) \quad\text{as $\varepsilon\to 0^+$,}
\end{equation}
 for every $R>0$, where $B_R'=\{x\in \R^{N}: \, |x|<R\}$ and 
\begin{equation}\label{eq:hat-psi}
\hat\psi(x):=|x|^{\gamma_s}\psi\bigg(\frac{x}{|x|},0\bigg).
\end{equation}
We note that $\hat\psi\not\equiv 0$, see Section \ref{sec:restr-fract-lapl}.

\begin{theorem}\label{thm:cap_K_eps}
Let $\Omega\subset\R^N$ be a bounded open set with $0\in\Omega$ and $K\subset\Omega$ compact. For every $\varepsilon>0$ let $K_\varepsilon=\varepsilon K$.
For $s\in (0,\min\{1,N/2\})$ and $j\in\N_*$, let $\lambda_j^s(\Omega)$
be the $j$-th eigenvalue of problem \eqref{eq:eigenvalue_main} and let
$u_j\in \mathcal D^{s,2}(\Omega)$ be 
an eigenfunction associated  to $\lambda_j^s(\Omega)$ normalized as in
\eqref{eq:normalization}. 
Then, as $\varepsilon\to0^+$, it holds
\begin{equation}\label{eq:cap_K_eps}
  \mathop{\rm Cap}\nolimits^s_\Omega(K_\varepsilon, u_j)
=\varepsilon^{N+2(\gamma_s-s)}\left\{
\mathop{\rm Cap}\nolimits^s_{\R^N}(K,\hat\psi)
+o(1)\right\},
\end{equation}
with $\gamma_s$ and $\hat\psi$ as in \eqref{eq:gamma_s_def} and \eqref{eq:hat-psi} respectively.
\end{theorem}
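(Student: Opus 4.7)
The plan is to exploit the scale invariance of the Gagliardo seminorm to recast the statement as a limit of capacities on the ``blown up'' domain $\varepsilon^{-1}\Omega$, and then bound this limit from above and below separately.

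For the scaling step, observe that the dilation $V\mapsto\tilde V(y):=\varepsilon^{-\gamma_s}V(\varepsilon y)$ is a bijection from $\mathcal{D}^{s,2}(\Omega)$ onto $\mathcal{D}^{s,2}(\varepsilon^{-1}\Omega)$ which sends the admissible class for $\mathop{\rm Cap}\nolimits^s_\Omega(K_\varepsilon,u_j)$ onto the admissible class for $\mathop{\rm Cap}\nolimits^s_{\varepsilon^{-1}\Omega}(K,\tilde u_\varepsilon)$, since $\varepsilon^{-1}K_\varepsilon=K$. A change of variables $x=\varepsilon y$ in the Gagliardo double integral shows $\|V\|^2_{\mathcal{D}^{s,2}(\Omega)}=\varepsilon^{N+2(\gamma_s-s)}\|\tilde V\|^2_{\mathcal{D}^{s,2}(\varepsilon^{-1}\Omega)}$, so that
\[
\mathop{\rm Cap}\nolimits^s_\Omega(K_\varepsilon,u_j)=\varepsilon^{N+2(\gamma_s-s)}\mathop{\rm Cap}\nolimits^s_{\varepsilon^{-1}\Omega}(K,\tilde u_\varepsilon),
\]
and the theorem reduces to proving $\mathop{\rm Cap}\nolimits^s_{\varepsilon^{-1}\Omega}(K,\tilde u_\varepsilon)\to\mathop{\rm Cap}\nolimits^s_{\R^N}(K,\hat\psi)$ as $\varepsilon\to 0^+$.

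For the upper bound ($\limsup$), I fix a cutoff $\zeta\in C^\infty_c(\R^N)$ with $\zeta\equiv 1$ in a neighborhood of $K$ and, given $\delta>0$, approximate a minimizer of $\mathop{\rm Cap}\nolimits^s_{\R^N}(K,\hat\psi)$ by $W=\zeta\hat\psi+\phi$ with $\phi\in C^\infty_c(\R^N\setminus K)$, which is possible by density of $C^\infty_c(\R^N\setminus K)$ in $\mathcal{D}^{s,2}(\R^N\setminus K)$. The natural competitor is then $w_\varepsilon:=\zeta\tilde u_\varepsilon+\phi$: the compact supports of $\zeta$ and $\phi$ lie inside $\varepsilon^{-1}\Omega$ for small $\varepsilon$, and $w_\varepsilon-\tilde u_\varepsilon=\phi-(1-\zeta)\tilde u_\varepsilon$ lies in $\mathcal{D}^{s,2}(\varepsilon^{-1}\Omega\setminus K)$ because $\phi$ is supported away from $K$ and $(1-\zeta)\tilde u_\varepsilon$ vanishes in a neighborhood of $K$. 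The local convergence \eqref{eq:17} together with continuity of multiplication by the fixed compactly supported cutoff $\zeta$ gives $\zeta\tilde u_\varepsilon\to\zeta\hat\psi$ strongly in $\mathcal{D}^{s,2}(\R^N)$, hence $\|w_\varepsilon\|^2_{\mathcal{D}^{s,2}}\to\|W\|^2_{\mathcal{D}^{s,2}}\leq \mathop{\rm Cap}\nolimits^s_{\R^N}(K,\hat\psi)+O(\delta)$.

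For the lower bound ($\liminf$), already the competitor $\zeta\tilde u_\varepsilon$ shows $\mathop{\rm Cap}\nolimits^s_{\varepsilon^{-1}\Omega}(K,\tilde u_\varepsilon)=O(1)$, so the unique minimizer $\tilde V_\varepsilon$ (extended by $0$ outside $\varepsilon^{-1}\Omega$) is bounded in $\mathcal{D}^{s,2}(\R^N)$ and, up to a subsequence, $\tilde V_\varepsilon\rightharpoonup V_\ast$ weakly. To verify admissibility of $V_\ast$ for $\mathop{\rm Cap}\nolimits^s_{\R^N}(K,\hat\psi)$ I decompose
\[
\tilde V_\varepsilon-\zeta\tilde u_\varepsilon=(\tilde V_\varepsilon-\tilde u_\varepsilon)+(1-\zeta)\tilde u_\varepsilon,
\]
where both summands belong to $\mathcal{D}^{s,2}(\varepsilon^{-1}\Omega\setminus K)$, hence to the weakly closed subspace $\mathcal{D}^{s,2}(\R^N\setminus K)$ of $\mathcal{D}^{s,2}(\R^N)$; passing to the weak limit (using the strong convergence $\zeta\tilde u_\varepsilon\to\zeta\hat\psi$) yields $V_\ast-\zeta\hat\psi\in\mathcal{D}^{s,2}(\R^N\setminus K)$. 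Weak lower semicontinuity of the norm then gives $\mathop{\rm Cap}\nolimits^s_{\R^N}(K,\hat\psi)\leq \|V_\ast\|^2_{\mathcal{D}^{s,2}}\leq\liminf_{\varepsilon\to0^+}\mathop{\rm Cap}\nolimits^s_{\varepsilon^{-1}\Omega}(K,\tilde u_\varepsilon)$, closing the loop.

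The main technical obstacle is that the profile $\hat\psi$ lies only in $H^s_{\rm loc}(\R^N)$ and not in $\mathcal{D}^{s,2}(\R^N)$ (being homogeneous of degree $\gamma_s$), so one cannot subtract $\hat\psi$ from $\tilde u_\varepsilon$ directly as a $\mathcal{D}^{s,2}$ element. Every comparison must be mediated through the compactly supported truncation $\zeta\hat\psi$, which is exactly what the extended $u$-capacity introduced after Definition \ref{def:U-capacity} is designed for; this mediation is what allows the purely local blow-up \eqref{eq:17} to be upgraded to a global $\mathcal{D}^{s,2}(\R^N)$ convergence of the admissibility correction, on which the whole argument hinges.
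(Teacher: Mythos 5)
Your argument is correct, and it takes a genuinely different route from the paper's. The paper proves this theorem entirely in the Caffarelli--Silvestre extension: it rescales the capacitary potential $V_{\Omega,K_\varepsilon,u_j}$ in $\mathcal{D}^{1,2}(\R^{N+1}_+;t^{1-2s})$, obtains an a priori bound via monotonicity of the capacity with respect to the domain ($\Omega/\varepsilon\supset B_{r_0}'$) combined with Lemma \ref{lem:cap_f_n}, identifies the weak limit $\tilde V$ as the capacitary potential of $(K,\hat\psi)$ in $\R^N$ by passing to the limit in the Euler--Lagrange equation \eqref{eq:tilde_V}, and then upgrades weak to norm convergence through the weak--strong pairing $\|\tilde V_{\varepsilon_n}\|^2=q(\tilde V_{\varepsilon_n},\tilde\eta_K\tilde U_{\varepsilon_n})\to q(\tilde V,\tilde\eta_K\tilde\psi)$, where only minimality (i.e.\ orthogonality of $\tilde V_\varepsilon$ to the subspace of functions vanishing on $K$) is used. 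You instead stay in the Gagliardo formulation on $\R^N$ and run a $\Gamma$-convergence-style scheme: the sharp upper bound comes from the explicit competitor $\zeta\tilde u_\varepsilon+\phi$ built from a near-minimizer of the limit capacity, and the lower bound from weak lower semicontinuity together with admissibility of the weak limit, without ever identifying that limit as the minimizer or invoking the equation. Both routes hinge on the same two ingredients --- the scaling identity producing the exponent $N+2(\gamma_s-s)$ and the strong convergence of the cutoff blow-up $\zeta\tilde u_\varepsilon\to\zeta\hat\psi$ deduced from \eqref{eq:17} --- and your use of the cutoff to mediate between the merely local object $\hat\psi$ and global admissibility is exactly the role $\tilde\eta_K$ plays in the paper. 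What the paper's approach buys in addition is strong (norm) convergence of the rescaled potentials themselves, which is not needed for the statement; what yours buys is a more elementary and self-contained argument, at the price of verifying directly that multiplication by a fixed $C^\infty_c$ cutoff is bounded from $H^s(B_R')$ into $\mathcal{D}^{s,2}(\R^N)$ --- a routine splitting of the double integral that the paper also uses implicitly (in Lemma \ref{lem:cap_f_n}) and that you should spell out in a full write-up, together with the equally routine fact that a $\mathcal{D}^{s,2}$-function vanishing in a neighborhood of $K$ belongs to $\mathcal{D}^{s,2}$ of the complement of $K$.
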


As a consequence of Theorems \ref{thm:expansion} and \ref{thm:cap_K_eps}, we deduce the following.

\begin{theorem}\label{thm:main}
Let $\Omega\subset\R^N$ be a bounded open set with $0\in\Omega$ and $K\subset\Omega$ compact. For every $\varepsilon>0$ let $K_\varepsilon=\varepsilon K$.
For $s\in (0,\min\{1,N/2\})$ and $j\in\N_*$, 
 let $\lambda_j^s(\Omega)$
be the $j$-th eigenvalue of problem \eqref{eq:eigenvalue_main} and let
$u_j\in \mathcal D^{s,2}(\Omega)$ be 
an associated eigenfunction satisfying \eqref{eq:normalization}.
If $\lambda_j^s(\Omega)$ is simple, then, as $\varepsilon\to0^+$, it holds
\begin{equation}\label{eq:main_thm}
\lambda_j^s(\Omega\setminus K_\varepsilon)-\lambda_j^s(\Omega)
=\varepsilon^{N+2(\gamma_s-s)} 
\mathop{\rm Cap}\nolimits^s_{\R^N}(K,\hat\psi)
 + o(\varepsilon^{N+2(\gamma_s-s)}),
\end{equation}
with $\gamma_s$ and $\hat\psi$ as in \eqref{eq:gamma_s_def} and \eqref{eq:hat-psi} respectively.
\end{theorem}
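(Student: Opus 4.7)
The plan is to derive Theorem \ref{thm:main} as a direct combination of Theorems \ref{thm:expansion} and \ref{thm:cap_K_eps}. First I would verify that the hypotheses of Theorem \ref{thm:expansion} are met by the family $K_\varepsilon=\varepsilon K$. Since $K$ is compact, $K\subset B'_R$ for some $R>0$, hence $K_\varepsilon\subset B'_{R\varepsilon}$; given any open neighborhood $\omega$ of $\{0\}$ in $\Omega$, we have $B'_{R\varepsilon}\subset\omega$ for $\varepsilon$ small, so $K_\varepsilon$ concentrates (in the sense of Definition \ref{def:concentrates}) to the singleton $\{0\}$. Moreover $\{0\}$ has zero $s$-fractional capacity in $\Omega$ (this is Example \ref{ex:singleton} referenced in the text, which follows from the fact that the singleton is removable when $s<N/2$). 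Hence Theorem \ref{thm:expansion} applies and yields
\begin{equation}\label{eq:plan1}
\lambda_j^s(\Omega\setminus K_\varepsilon)-\lambda_j^s(\Omega)=
\mathop{\rm Cap}\nolimits^s_\Omega(K_\varepsilon,u_j)
+o\!\left(\mathop{\rm Cap}\nolimits^s_\Omega(K_\varepsilon,u_j)\right),\quad \text{as }\varepsilon\to 0^+.
\end{equation}

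Next I would substitute into \eqref{eq:plan1} the asymptotic expansion of $\mathop{\rm Cap}\nolimits^s_\Omega(K_\varepsilon,u_j)$ provided by Theorem \ref{thm:cap_K_eps}, namely
\[
\mathop{\rm Cap}\nolimits^s_\Omega(K_\varepsilon,u_j)=\varepsilon^{N+2(\gamma_s-s)}\left\{\mathop{\rm Cap}\nolimits^s_{\R^N}(K,\hat\psi)+o(1)\right\}.
\]
In particular $\mathop{\rm Cap}\nolimits^s_\Omega(K_\varepsilon,u_j)=O(\varepsilon^{N+2(\gamma_s-s)})$, so the remainder term in \eqref{eq:plan1} satisfies $o(\mathop{\rm Cap}\nolimits^s_\Omega(K_\varepsilon,u_j))=o(\varepsilon^{N+2(\gamma_s-s)})$. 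Plugging both pieces into \eqref{eq:plan1} and collecting the $o(\varepsilon^{N+2(\gamma_s-s)})$ contributions produces exactly \eqref{eq:main_thm}.

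There is no real obstacle here, since all the analytic work has been done in Theorems \ref{thm:expansion} and \ref{thm:cap_K_eps}: what remains is only the bookkeeping of the two little-$o$ terms and a routine check that the concentration hypothesis holds for the rescaled family $\varepsilon K$. The only minor point to watch is that, when $\mathop{\rm Cap}\nolimits^s_{\R^N}(K,\hat\psi)=0$, the statement degenerates to the purely asymptotic information $\lambda_j^s(\Omega\setminus K_\varepsilon)-\lambda_j^s(\Omega)=o(\varepsilon^{N+2(\gamma_s-s)})$, which is still a valid reading of \eqref{eq:main_thm}; this case requires no separate argument.
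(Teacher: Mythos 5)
Your proposal is correct and follows exactly the route the paper takes: verify that $\{\varepsilon K\}_{\varepsilon>0}$ concentrates to $\{0\}$, which has zero $s$-capacity by Example \ref{ex:singleton}, apply Theorem \ref{thm:expansion}, and substitute the expansion of $\mathop{\rm Cap}\nolimits^s_\Omega(K_\varepsilon,u_j)$ from Theorem \ref{thm:cap_K_eps}. The little-$o$ bookkeeping and the remark on the degenerate case $\mathop{\rm Cap}\nolimits^s_{\R^N}(K,\hat\psi)=0$ are both handled correctly.
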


The asymptotic expansion \eqref{eq:main_thm} is sharp 
  whenever $\mathop{\rm Cap}\nolimits^s_{\R^N}(K,\hat\psi)\neq0$,
for example when $K$ has nonzero Lebesgue measure in $\R^N$, 
as observed in Corollary \ref{cor:sharp_asymp} below. 
 We mention that the fractional capacity
  $\mathop{\rm Cap}\nolimits^s_{\R^N}(K,\hat\psi)$ on the whole
  $\R^N$ appearing in the leading term of the expansion
  \eqref{eq:main_thm} is related to the weighted capacity of $K$ in
  $\R^{N+1}$ with respect to the Muckenhoupt weight $|t|^{1-2s}$, see
  Remark \ref{rem:heinonen}; we refer to \cite[Chapter
  2]{heinonen2012nonlinear} for a discussion on the properties
of such capacity.

\begin{corollary}\label{cor:sharp_asymp}
Under the same assumptions as in Theorem \ref{thm:main}, suppose moreover that the $N$-dimensional Lebesgue measure of $K$ is strictly positive. Then
\begin{equation}
\lim_{\varepsilon\to0^+}\frac{\lambda_j^s(\Omega\setminus K_\varepsilon)-\lambda_j^s(\Omega)}{\varepsilon^{N+2(\gamma_s-s)}}
= \mathop{\rm Cap}\nolimits^s_{\R^N}(K,\hat\psi)>0.
\end{equation}
\end{corollary}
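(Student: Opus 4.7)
The plan is to apply Theorem~\ref{thm:main} directly and then reduce the statement to a positivity assertion for the fractional $u$-capacity. Dividing the expansion \eqref{eq:main_thm} by $\varepsilon^{N+2(\gamma_s-s)}$ and letting $\varepsilon\to 0^+$ gives
\[
\lim_{\varepsilon\to 0^+}\frac{\lambda_j^s(\Omega\setminus K_\varepsilon)-\lambda_j^s(\Omega)}{\varepsilon^{N+2(\gamma_s-s)}}
=\mathop{\rm Cap}\nolimits^s_{\R^N}(K,\hat\psi),
\]
so the entire content of the corollary reduces to showing that $\mathop{\rm Cap}\nolimits^s_{\R^N}(K,\hat\psi)>0$ whenever the Lebesgue measure of $K$ is strictly positive.

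I would prove this positivity by contradiction. Assume $\mathop{\rm Cap}\nolimits^s_{\R^N}(K,\hat\psi)=0$ and, using the alternative formulation of Definition~\ref{def:U-capacity} via the cutoff $\zeta_K$, pick a minimizing sequence $\{w_n\}\subset \mathcal D^{s,2}(\R^N)$ with $w_n-\zeta_K\hat\psi\in\mathcal D^{s,2}(\R^N\setminus K)$ and $\|w_n\|_{\mathcal D^{s,2}(\R^N)}\to 0$. The Sobolev embedding $\mathcal D^{s,2}(\R^N)\hookrightarrow L^{2N/(N-2s)}(\R^N)$ then forces $w_n\to 0$ in $L^{2N/(N-2s)}(\R^N)$, hence $w_n\to 0$ a.e.\ on $K$ along a subsequence. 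On the other hand, functions in $\mathcal D^{s,2}(\R^N\setminus K)$, being $L^{2N/(N-2s)}$-limits of smooth functions compactly supported in $\R^N\setminus K$, vanish a.e.\ on $K$ when viewed inside $\mathcal D^{s,2}(\R^N)$ via trivial extension. Combining these two facts (and using $\zeta_K\equiv 1$ near $K$) yields $\hat\psi=0$ a.e.\ on $K$.

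To reach a contradiction it remains to show that $\hat\psi$ cannot vanish on a set of positive Lebesgue measure. Since $\hat\psi(x)=|x|^{\gamma_s}\psi(x/|x|,0)$ is $\gamma_s$-homogeneous, by a polar-coordinates computation its zero set has positive $N$-dimensional measure in $\R^N$ if and only if the set $\{\theta\in\mathbb S^{N-1}:\psi(\theta,0)=0\}$ has positive $(N-1)$-dimensional Hausdorff measure on the equator of $\partial\mathbb S^N_+$. This is ruled out by strong unique continuation for the degenerate elliptic problem \eqref{eq:eigsp} with Muckenhoupt weight $\theta_{N+1}^{1-2s}$, equivalently by the strong unique continuation property for the fractional Laplacian via the Caffarelli--Silvestre extension (which is already invoked in \cite{FallFelli14} in the analysis leading to the profile $\hat\psi$): a nontrivial $\psi$ cannot have its trace vanish on a positive-measure subset of the equator. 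This unique-continuation step is the only nontrivial ingredient; everything else is an immediate consequence of Theorem~\ref{thm:main} and routine functional-analytic manipulations.
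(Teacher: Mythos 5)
Your proposal is correct in substance and reaches the same contradiction as the paper, but the two intermediate steps are organized differently. For the reduction to ``$\hat\psi=0$ a.e.\ on $K$'' you use a minimizing sequence $w_n\to 0$ in $\mathcal D^{s,2}(\R^N)$, the Sobolev embedding into $L^{2^*(s)}$, and a.e.\ convergence along a subsequence; this works (elements of $\mathcal D^{s,2}(\R^N\setminus K)$ do vanish a.e.\ on $K$, being $L^{2^*(s)}$-limits of functions supported away from $K$). The paper instead applies the Hardy--trace inequality \eqref{eq:half_space_hardy} to the optimal capacitary potential $V_K$, which yields the quantitative lower bound $\mathop{\rm Cap}\nolimits^s_{\R^N}(K,\hat\psi)\geq \Lambda_{N,s}\int_K|x|^{-2s}|\hat\psi|^2\,dx$ in one line; your route is softer but equally valid.

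The one place where you should be more careful is the final unique-continuation step. What is actually needed is \emph{unique continuation from sets of positive Lebesgue measure} for weak solutions of $(-\Delta)^s u=0$, which is precisely \cite[Theorem 1.4]{FallFelli14}; the \emph{strong} unique continuation property (vanishing to infinite order at a point) that you invoke does not by itself imply this, and passing from SUCP to positive-measure sets is a nontrivial theorem, not a reformulation. Moreover, your detour through the equator is unnecessary and slightly circular: the natural route is to observe that $\hat\psi$ itself is a local weak solution of $(-\Delta)^s\hat\psi=0$ in $\R^N$ (being the trace of the homogeneous extension $\tilde\psi$, which solves the degenerate equation with zero weighted Neumann datum), apply \cite[Theorem 1.4]{FallFelli14} directly to conclude $\hat\psi\equiv 0$ from $\hat\psi=0$ a.e.\ on $K$ with $|K|>0$, and then contradict Remark \ref{rem:hatpsinonzero}, which guarantees $\hat\psi\not\equiv 0$; the statement you assert about traces of eigenfunctions of \eqref{eq:eigsp} on the equator is true but is itself proved by exactly this argument in $\R^N$, so it cannot serve as an independent black box. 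With the citation corrected and the nontriviality of $\hat\psi$ made explicit, your proof is complete.
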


\begin{remark} It is worth mentioning that in the
    literature, besides the notion of \emph{restricted fractional
      Laplacian} treated in the present paper, also the so called
    \emph{spectral fractional Laplacian} (defined as the power of
    $-\Delta$ obtained by using its spectral decomposition) is often
    taken into consideration. The restricted and the spectral
    fractional Laplacians on bounded domains are different operators,
    as observed in \cite{musina-nazarov} and \cite{SV}. The problem of spectral stability
    investigated in the present paper turns out to be much simpler for
    the spectral fractional Laplacian than for the restricted one,
    since the eigenvalues of the spectral fractional $s$-Laplacian are
    just the $s$-power of the eigenvalues of the classical Dirichlet
    Laplacian; hence the asymptotics of eigenvalues under removal of
    small sets can be easily deduced from the classical case treated
    in \cite{AFHL}.

Denoting as $\{\lambda_j(\Omega)\}_{j=1}^\infty$  the eigenvalues the Laplacian
in a bounded open set $\Omega\subset\R^N$ with homogeneous boundary
conditions and by  $\varphi_j$ the eigenfunction associated to
$\lambda_j(\Omega)$ normalized with respect to the $L^2(\Omega)$-norm,
 the spectral fractional Laplacian with homogeneous Dirichlet boundary
 conditions can be defined, for all $s\in(0,1)$, as
\[
(-\Delta_{\rm spectral})^s u(x)=\sum_{j=1}^{+\infty} (\lambda_j(\Omega))^s \left(\int_\Omega u\varphi_j\,dx\right) \varphi_j(x), \quad x\in \Omega.
\]
The eigenvalues and the eigenfunctions of $(-\Delta_{\rm spectral})^s$ 
are, respectively, $\nu_j^s(\Omega):=(\lambda_j(\Omega))^s$ and $\varphi_j$.
Then, from \cite[Theorem 1.4]{AFHL} it follows easily that, 
if $\lambda_j(\Omega)$ is simple and 
$\{K_\varepsilon\}_{\varepsilon>0}$ is a family of compact sets
contained in $\Omega$ concentrating to a null capacity compact set,
then 
\begin{equation*}
\nu_j^s(\Omega\setminus K_\varepsilon)-\nu_j^s(\Omega)=s
(\lambda_j(\Omega))^{s-1}\mathop{\rm Cap}\nolimits_\Omega(K_\varepsilon,\varphi_j)
+o(\mathop{\rm Cap}\nolimits_\Omega(K_\varepsilon,\varphi_j)),
\end{equation*}
as $\varepsilon\to0^+$, where
$\mbox{\rm Cap}_{\Omega}(K_\varepsilon,\varphi_j)=
\inf\left\{\int_{\Omega}|\nabla f|^2:\ f\in H^1_0(\Omega)\text{ and
  }f-\varphi_j\in H^1_0(\Omega\setminus K_\varepsilon) \right\}$.
Asymptotic expansions of
$\mbox{\rm Cap}_{\Omega}(K_\varepsilon,\varphi_j)$ are obtained in
\cite{AFHL} in several situations.  

Comparing the above asymptotic expansion 
for the spectral fractional Laplacian with the expansion derived in
Theorem \ref{thm:main}, we note that only in the case of the restricted
fractional Laplacian the vanishing order of the eigenvalue variation
depends on the power $s$; hence the eigenvalues of the two operators exhibit quite
different asymptotic behaviours under removal of small sets.
\end{remark}

The paper is organized as follows. In Section \ref{sec:preliminaries}
we collect some preliminary results.  In Sections \ref{sec:continuity}
and \ref{sec:expansion} we prove respectively Theorems
\ref{thm:continuity} and \ref{thm:expansion}.  In Section
\ref{sec:scaling} we present the proofs of Theorems
\ref{thm:cap_K_eps}, \ref{thm:main} and of Corollary \ref{cor:sharp_asymp}.
Finally, in Appendix \ref{sec:app_A} we prove an $L^\infty$ bound for
eigenfunctions which is needed in Section \ref{sec:continuity} and in
Appendix \ref{sec:app_B} we discuss the Definition
\ref{def:concentrates} of concentrating compact sets.

\section{Preliminaries}\label{sec:preliminaries}
In this section we recall some known facts and present some  preliminary results.

\subsection{Restricted fractional
      Laplacian and Caffarelli-Silvestre extension}\label{sec:restr-fract-lapl}
The fractional Laplacian $(-\Delta)^s$ can be defined over the space $C^\infty_c(\R^N)$ by the principal value integral
\[
(-\Delta)^s u(x)=C(N,s) \lim_{\eps\to0^+} \int_{\R^N\setminus B_\eps(x)} 
\frac{u(x)-u(y)}{|x-y|^{N+2s}} \,dy,
\]
where $C(N,s)$ is given in \eqref{eq:14}, 
or equivalently through the Fourier transform:
\[
\mathcal{F}((-\Delta)^s u)(\xi)=|\xi|^{2s} \mathcal{F}u(\xi), \qquad \xi\in \R^N.
\] 
The scalar product of $\mathcal D^{s,2}(\R^N)$ defined in
\eqref{eq:21} is  naturally associated to $(-\Delta)^s$, in the sense
that $(-\Delta)^s$ can be extended to a bounded linear operator from 
$\mathcal D^{s,2}(\R^N)$ to its dual $(\mathcal{D}^{s,2}(\R^N) )^*$,
which actually coincides with the Riesz isomorphism of
$\mathcal{D}^{s,2}(\R^N)$ with respect to the scalar product
\eqref{eq:21}, i.e. 
\[
 \phantom{a}_{( \mathcal{D}^{s,2}(\R^N) )^*}\langle(-\Delta)^s u, v\rangle_{ \mathcal{D}^{s,2}(\R^N) } = (u,v)_{\mathcal{D}^{s,2}(\R^N)} 
\]
for all $u,v\in \mathcal{D}^{s,2}(\R^N)$.

In \cite{CaffarelliSilvestre07} Caffarelli and
  Silvestre proved that $(-\Delta)^s$ can be realized as a
  Dirichlet-to-Neumann operator, i.e. as an operator mapping a
  Dirichlet boundary condition to a Neumann condition via an extension
  problem on the half space
\[  
\R^{N+1}_+=\{ (x,t)\in\R^{N+1}:\, x\in\R^N, t>0 \}.
\]
For every
  $U,V\in C^\infty_c(\overline{\R^{N+1}_+})$, let
\begin{equation*}
q(U,V)=\int_{\R^{N+1}_+}t^{1-2s}\nabla U(x,t)\cdot\nabla V(x,t) \,dx\,dt.
\end{equation*}
We define $\mathcal{D}^{1,2}(\R^{N+1}_+;t^{1-2s})$ as the completion of $C^\infty_c(\overline{\R^{N+1}_+})$ with respect to the norm
\[
\|U\|_{\mathcal{D}^{1,2}(\R^{N+1}_+;t^{1-2s})}= \sqrt{q(U,U)}.
\]
There exists a well-defined continuous trace map 
\begin{equation}\label{eq:trace}
\mathop{\rm Tr}: \mathcal{D}^{1,2}(\R^{N+1}_+;t^{1-2s}) \to \mathcal{D}^{s,2}(\R^N)
\end{equation}
which is onto (see for example \cite{brandle2013concave}).
By the Caffarelli-Silvestre extension theorem
\cite{CaffarelliSilvestre07}, given $u\in \mathcal D^{s,2}(\R^N)$, the minimization problem
\[
\min\left\{ q(W,W) : \, W\in \mathcal{D}^{1,2}(\R^{N+1}_+;t^{1-2s}),\, \mathop{\rm Tr}W=u\right\}
\]
admits a unique minimizer $U=\mathcal H(u) \in
\mathcal{D}^{1,2}(\R^{N+1}_+;t^{1-2s})$, which moreover
satisfies 
\begin{equation}\label{eq:13}
q(\mathcal H(u),W)=\kappa_s(u,\mathop{\rm Tr}W)_{\mathcal
  D^{s,2}(\R^N)},\quad\text{for all }\varphi \in
\mathcal{D}^{1,2}(\R^{N+1}_+;t^{1-2s}),
\end{equation}
where
\[
\kappa_s=\frac{\Gamma(1-s)}{2^{2s-1}\Gamma(s)},
\]
i.e. $U=\mathcal H(u)$
 weakly solves
\begin{equation*}
\begin{cases}
-\text{div}(t^{1-2s} \nabla U)=0, \quad&\text{in } \R^{N+1}_+, \\
\lim_{t\to0^+}\left(-t^{1-2s} \partial_t U \right)= \kappa_s
(-\Delta)^su, &\text{in } \R^N\times\{0\}.
\end{cases}
\end{equation*}
From \eqref{eq:13} it follows that
\begin{equation}\label{eq:4}
  \|U\|_{\mathcal{D}^{1,2}(\R^{N+1}_+;t^{1-2s})}^2=\kappa_s\|u\|_{\mathcal D^{s,2}(\R^N)}^2.
\end{equation}
As a consequence, if $\lambda_j^s(\Omega)$ is an eigenvalue of
\eqref{eq:eigenvalue_main} for a certain $j\in\N_*=\N\setminus\{0\}$
and $u_j \in \mathcal D^{s,2}(\Omega)$ is an associated eigenfunction, the extension $U_j=\mathcal H(u_j)$ satisfies $\mathop{\rm Tr} U_j=u_j$ and
\begin{equation}\label{eq:realization_s}
\begin{cases}
-\text{div}(t^{1-2s} \nabla U_j)=0, \quad&\text{in } \R^{N+1}_+, \\
\lim_{t\to0^+}\left(-t^{1-2s} \partial_t U_j
\right)=\lambda_j^s(\Omega) \kappa_s \mathop{\rm Tr} U_j,
 &\text{in } \Omega\times\{0\}, \\
U_j=0, &\text{in } (\R^N\setminus \Omega)\times\{0\},
\end{cases}
\end{equation}
in a weak sense, that is
\begin{equation}\label{eq:realization_s_w}
\begin{cases}
U_j\in
  \mathcal{D}^{1,2}_{\Omega^c}(\R^{N+1}_+;t^{1-2s}),\\
{\displaystyle{ q(U_j,\phi)
=\lambda_j^s(\Omega) \kappa_s\int_\Omega \mathop{\rm Tr}U_j\mathop{\rm Tr}\phi\,dx}}\quad\text{for every }\phi\in
\mathcal{D}^{1,2}_{\Omega^c}(\R^{N+1}_+;t^{1-2s}).
\end{cases}
\end{equation}
Here, the space $\mathcal{D}^{1,2}_{\Omega^c}(\R^{N+1}_+;t^{1-2s})$ is
defined as the
closure of $C^\infty_c(\R^{N+1}_+\cup\Omega)$ in $\mathcal{D}^{1,2}(\R^{N+1}_+;t^{1-2s})$;
we also have the equivalent characterization
\begin{equation}\label{eq:15}
\mathcal{D}^{1,2}_{\Omega^c}(\R^{N+1}_+;t^{1-2s})
=\{U\in \mathcal{D}^{1,2}(\R^{N+1}_+;t^{1-2s}): \, 
\mathop{\rm Tr}U\in \mathcal D^{s,2}(\Omega)\}.
\end{equation}
We can consider equivalently either \eqref{eq:realization_s_w} or \eqref{eq:eigenvalue_main} with $\lambda=\lambda_j^s(\Omega)$.
In this extended setting, the eigenvalues admit the following Courant-Fisher minimax characterization 
\begin{equation}\label{eq:minimax}
\lambda_j^s(\Omega)=\min_{\mathcal U\in\mathcal S_j}\, 
 \max_{\substack{U\in \mathcal{U} \\
      \|\mathop{\rm Tr}U\|_{L^2(\Omega)}\neq0}} \!\!\!
\mathcal{R}(U)
\end{equation}
where $\mathcal S_j$ denotes the family of all $j$-dimensional 
subspaces of $\mathcal{D}^{1,2}_{\Omega^c}(\R^{N+1}_+;t^{1-2s})$ and 
$\mathcal R$ is the Rayleigh type quotient defined as
\begin{equation}\label{eq:rayleigh_s}
\mathcal{R}(U)=\frac{q(U,U)}{\kappa_s \int_{\Omega} |\mathop{\rm Tr}U(x)|^2\,dx}.
\end{equation}

\begin{remark}\label{rem:capacity}
If $\Omega\subset\R^N$ is  bounded and open  and $K\subset\Omega$ is a
compact subset, in view of the Caffarelli-Silvestre 
extension result described above and, in particular, of \eqref{eq:4},
we can characterize the  
Gagliardo $s$-fractional capacity introduced in Definition
\ref{def:s-capacity} as 
\begin{equation*}
\mathop{\rm Cap}\nolimits^s_\Omega(K)=\frac1{\kappa_s}\inf \left\{
q(W,W): \, W\in \mathcal{D}^{1,2}_{\Omega^c}(\R^{N+1}_+;t^{1-2s}) \text{ and } 
W-\eta_K \in \mathcal{D}^{1,2}_{\Omega^c\cup K}(\R^{N+1}_+;t^{1-2s}) 
\right\},
\end{equation*}
where $\eta_K \in C^\infty_c(\R^{N+1}_+\cup\Omega)$ is any fixed
function such that $\eta_K=1$ in a neighborhood of $K$.

Correspondingly, for any $u\in \mathcal D^{s,2}(\Omega)$, we can characterize the  $s$-fractional
$u$-capacity of $K$ in $\Omega$  introduced in Definition \ref{def:U-capacity}
as 
\begin{equation}\label{eq:16}
\mathop{\rm Cap}\nolimits^s_\Omega(K,u)=\tfrac1{\kappa_s}\inf \left\{
q(W,W):  W\in \mathcal{D}^{1,2}_{\Omega^c}(\R^{N+1}_+;t^{1-2s}),\,
W-U \in \mathcal{D}^{1,2}_{\Omega^c\cup K}(\R^{N+1}_+;t^{1-2s}) 
\right\}\!\!
\end{equation}
where $U\in  \mathcal{D}^{1,2}_{\Omega^c}(\R^{N+1}_+;t^{1-2s})$ is
such that $\mathop{\rm Tr}U=u$.
\end{remark}

\subsection{Local asymptotic behaviour of eigenfunctions and their extension}\label{sec:local-asympt-behav}
For  $j\in\N_*$ and $s\in (0,\min\{1,\frac N2\})$, let $\lambda_j^s(\Omega)$
be the $j$-th eigenvalue of problem \eqref{eq:eigenvalue_main} and let
$U_j\in
 \mathcal{D}^{1,2}_{\Omega^c}(\R^{N+1}_+;t^{1-2s})$ be a solution to 
 \eqref{eq:realization_s} such that its
 trace $u_j=\mathop{\rm Tr}U_j$ satisfies the normalization condition 
\eqref{eq:normalization}.
In \cite{FallFelli14}, the asymptotic behavior of $U_j$ (and
consequently of its trace $u_j$) at $0$ has been described in terms of 
the eigenvalues and the eigenfunctions of problem
\eqref{eq:eigsp}. More precisely, in \cite[Theorem 4.1 and Lemma
4.2]{FallFelli14}
it has been proved that there exist $k_0\geq1$ and
$\psi\not\equiv0$ eigenfunction of problem \eqref{eq:eigsp} associated
to the eigenvalue $\mu_{k_0}^s$ such that
\begin{equation}\label{eq:12}
\tilde U_\varepsilon(z):=\varepsilon^{-\gamma_s} U_j(\varepsilon z)\to 
\tilde\psi(z):= |z|^{\gamma_s}
\psi\left(\frac{z}{|z|}\right)\quad\text{in } H^1(B_R^+;t^{1-2s}) 
\quad\text{as $\varepsilon\to 0^+$,}
\end{equation}
 for every $R>0$, where
  $B_R^+=\{z=(x,t)\in \R^{N+1}_+: \, |z|<R\}$,
 $\gamma_s$ is given in \eqref{eq:gamma_s_def}, and the space $H^1(B_R^+;t^{1-2s})$
is defined in Section \ref{sec:sobolev-hardy-type} below.

 The
convergence \eqref{eq:17} stated in the introduction follows from 
\eqref{eq:12} by passing to the traces.
\begin{remark}\label{rem:hatpsinonzero}
We note that the limit profile $\hat\psi:=\mathop{\rm Tr}\tilde\psi$
appearing in \eqref{eq:17} is not identically null; indeed 
$\tilde\psi$ and $t^{1-2s} \partial_t
\tilde\psi$ can not both vanish on $\partial \R^{N+1}_+$, because
otherwise $\tilde\psi$  would be
    a weak solution to the equation $\text{div}(t^{1-2s} \nabla
    \tilde\psi)=0$ satisfying both Dirichlet and weighted Neumann
    homogeneous boundary conditions and its trivial extension in
    $\R^{N+1}$ would violate the  unique continuation
    principle for elliptic equations with Muckenhoupt weights proved
    in \cite{tao-zhang} (see also \cite{GL}, and
    \cite[Proposition 2.2]{Ruland}).
  \end{remark}

\subsection{Sobolev and Hardy-type inequalities}\label{sec:sobolev-hardy-type}
For every $s\in \big(0,\min\{1,\frac N2\}\big)$ (so that $N-2s>0$), let
\begin{equation}\label{eq:2*s}
2^*\!(s)=\frac{2N}{N-2s}.
\end{equation}
The following Sobolev inequalities and compactness results can be found for example in \cite{DNPV12}. 

\begin{theorem}[{\cite[Theorems 6.5 and 6.7, Corollary 7.2]{DNPV12}}]\label{thm:immersions}
Let $\Omega\subset\R^N$, $N\geq 1$, be a bounded, open set of class $C^{0,1}$ and let $s\in (0,\min\{1,N/2\})$.
\begin{itemize}
\item[(i)] There exists a positive constant $S_{N,s}$ such that 
\[
S_{N,s}\|u\|_{L^{2^*(s)}(\R^N)}\leq \|u\|_{\mathcal D^{s,2}(\R^N)}
\quad\text{for all }u\in \mathcal D^{s,2}(\R^N).
\]
\item[(ii)] There exists a positive constant $C=C(N,s,\Omega)$ such that for every $u\in H^s(\Omega)$ and for every $q\in [1,2^*\!(s)]$ it holds
\[
\|u\|_{L^q(\Omega)} \leq C\|u\|_{H^s(\Omega)}.
\]
\item[(iii)] If $\mathcal{I}$ is a bounded subset of $H^s(\Omega)$, then $\mathcal{I}$ is pre-compact in $L^q(\Omega)$ for every $q\in [1,2^*\!(s))$.
\end{itemize}
\end{theorem}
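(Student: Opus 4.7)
The result collects the fractional Sobolev embedding and the Rellich--Kondrachov compactness theorem for $H^s(\Omega)$, both classical and proved in detail in \cite{DNPV12}. Since (ii) and (iii) reduce to (i) via an extension argument, my plan is to prove (i) first on the whole space and then use a bounded linear extension operator $E \colon H^s(\Omega) \to H^s(\R^N)$, available because $\Omega$ is of class $C^{0,1}$, to transfer the estimates to $\Omega$.

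For (i) the cleanest route is via the Fourier characterization \eqref{eq:21}. By density it suffices to argue on $u \in C^\infty_c(\R^N)$. Writing $u = I_s((-\Delta)^{s/2} u)$, where $I_s$ is the Riesz potential of order $s$ (convolution with $c_{N,s}|x|^{s-N}$), the Hardy--Littlewood--Sobolev inequality yields
\[
\|u\|_{L^{2^*(s)}(\R^N)} = \bigl\|I_s\bigl((-\Delta)^{s/2} u\bigr)\bigr\|_{L^{2^*(s)}(\R^N)} \leq C \bigl\|(-\Delta)^{s/2} u\bigr\|_{L^2(\R^N)} = C \|u\|_{\mathcal{D}^{s,2}(\R^N)}.
\]
For (ii) I would extend $u \in H^s(\Omega)$ to $Eu \in H^s(\R^N)$ and note that $\|Eu\|_{\mathcal{D}^{s,2}(\R^N)} \leq \|Eu\|_{H^s(\R^N)} \leq C \|u\|_{H^s(\Omega)}$; applying (i) and restricting to $\Omega$ gives the embedding into $L^{2^*(s)}(\Omega)$, and Hölder's inequality (using $|\Omega| < \infty$) then covers every $q \in [1, 2^*(s)]$.

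For (iii) I would apply the Fréchet--Kolmogorov--Riesz compactness criterion to the extended family $\{Eu : u \in \mathcal{I}\}$, which is supported in a fixed bounded set. Tightness is then automatic, and equi-integrability in $L^q$ for $q < 2^*(s)$ is inherited from the uniform $L^{2^*(s)}$ bound granted by (ii). The decisive input is uniform translation-continuity: from $\widehat{\tau_h u - u}(\xi) = (e^{-ih\cdot\xi} - 1)\widehat{u}(\xi)$ and the elementary pointwise bound $|e^{-ih\cdot\xi}-1|^2 \leq C \min\{1,\, |h|^{2s}|\xi|^{2s}\}$, Plancherel gives
\[
\|\tau_h u - u\|_{L^2(\R^N)}^2 \leq C |h|^{2s}\, \|u\|_{\mathcal{D}^{s,2}(\R^N)}^2,
\]
so that, interpolating between $L^2$ and $L^{2^*(s)}$, one obtains uniform translation-continuity in $L^q$ for every $q \in [1, 2^*(s))$, which closes the compactness argument.

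The two non-elementary inputs are the Hardy--Littlewood--Sobolev inequality, which drives (i), and the existence of a bounded extension operator on Lipschitz domains, which is what makes (ii) and (iii) available; everything else is interpolation and the standard compactness criterion. The strictness $q < 2^*(s)$ in (iii) is essential, since translation-continuity in $L^{2^*(s)}$ itself can fail for a general bounded family in $\mathcal{D}^{s,2}(\R^N)$, so this is the step in which the interpolation gap must be carefully exploited.
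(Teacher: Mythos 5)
Your proposal is correct, but note that the paper does not prove this statement at all: it is quoted verbatim from Di Nezza--Palatucci--Valdinoci \cite{DNPV12} (Theorems 6.5, 6.7 and Corollary 7.2), so there is no internal proof to compare against. Your route also differs from the one in the cited source. For the Sobolev inequality (i), \cite{DNPV12} deliberately gives an elementary real-variable proof (via a covering/rearrangement argument) precisely to avoid the Riesz potential and the Hardy--Littlewood--Sobolev inequality, whereas you invoke $u=I_s((-\Delta)^{s/2}u)$ together with HLS and Plancherel; both are classical and valid, yours being shorter at the price of importing HLS as a black box. For (ii) the reduction via a bounded extension operator on $C^{0,1}$ domains plus H\"older on the bounded set $\Omega$ is exactly the standard mechanism. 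For (iii), \cite{DNPV12} estimates translations directly from the Gagliardo seminorm, while you use the Fourier-side bound $\|\tau_h u-u\|_{L^2}^2\leq C|h|^{2s}\|u\|_{\mathcal D^{s,2}(\R^N)}^2$ and interpolate with the uniform $L^{2^*(s)}$ bound; this is correct, and your remark that the gap $q<2^*(s)$ is what makes the interpolation exponent nontrivial is the right point to flag. Two small details worth making explicit: the extended functions $Eu$ are not automatically compactly supported for an arbitrary extension operator, so one should either use an operator that builds in a cutoff (as the one in \cite{DNPV12} does) or multiply by a fixed $C^\infty_c$ function equal to $1$ on $\Omega$ before applying Fr\'echet--Kolmogorov; and in (ii) one should observe that $H^s(\R^N)\subset \mathcal D^{s,2}(\R^N)$ (which uses $N>2s$) so that (i) indeed applies to $Eu$.
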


Let us recall some fractional  Hardy-type inequalities. 
For any $s\in(0,1)$, the following Hardy-type inequality for
$\mathcal{D}^{s,2}(\R^N)$-functions was established in \cite{Herbst77}:
\begin{equation}\label{eq:Herbst}
\Lambda_{N,s} \int_{\R^N} \frac{u^2(x)}{|x|^{2s}}\,dx \leq  \|u\|_{\mathcal{D}^{s,2}(\R^N)}^2
\qquad \text{for all } u \in \mathcal{D}^{s,2}(\R^N),
\end{equation}
where
\[
\Lambda_{N,s}=2^{2s} \frac{\Gamma^2\left(\frac{N+2s}{4}\right)}{\Gamma^2\left(\frac{N-2s}{4}\right)}.
\]
By combining the \eqref{eq:Herbst} and
 \eqref{eq:4}, we obtain the following Hardy-trace inequality: 
\begin{align}\label{eq:half_space_hardy}
   \Lambda_{N,s}\kappa_s\int_{\R^N}
   \frac{|\mathop{\rm Tr}U|^2}{|x|^{2s}}\,dx\leq \int_{\R^{N+1}_+}t^{1-2s}|\nabla
   U|^2\,dx\,dt,\quad\text{for all }U\in{\mathcal
     D}^{1,2}({\R^{N+1}_+};t^{1-2s}).
 \end{align}
Relation \eqref{eq:half_space_hardy} implies in particular that, if $\Omega$ is bounded,
\begin{equation}\label{eq:hardy_bdd}
   \int_{\Omega}
|\mathop{\rm Tr}U|^2\,dx\leq \frac{\text{diam}(\Omega)^{2s}}{\Lambda_{N,s}\kappa_s} \|U\|^2_{\mathcal{D}^{1,2}({\R^{N+1}_+};t^{1-2s})},
   \quad\text{for all }U\in{\mathcal
     D}^{1,2}_{\Omega^c}({\R^{N+1}_+};t^{1-2s}),
\end{equation}
where $\text{diam}(\Omega)$ is the diameter of $\Omega$.

For $r>0$, let $B_r^+=\{z=(x,t)\in \R^{N+1}_+: \, |z|<r\}$. We define $H^1(B_r^+;t^{1-2s})$ as the completion of $C^\infty(\overline{B_r^+})$ with respect to
\[
\|U\|_{H^1(B_r^+;t^{1-2s})} =\left( \int_{B_r^+} t^{1-2s} (|\nabla U|^2+U^2) \,dx\,dt \right)^{1/2}.
\]
The following Hardy type inequality with boundary terms was proved in \cite{FallFelli14}.

\begin{lemma}[{\cite[Lemma 2.4]{FallFelli14}}]\label{lem:Hardy}
Let $s\in (0,\min\{1,N/2\})$.
For all $r > 0$ and $U \in H^1(B_r^+;t^{1-2s})$, the following holds
\begin{equation*}
\left(\frac{N-2s}{2}\right)^2 \int_{B_r^+} t^{1-2s} \frac{U^2(z)}{|z|^2} \,dz \leq
\int_{B_r^+} t^{1-2s} \left( \nabla U(z)\cdot \frac{z}{|z|} \right)^2 \,dz+
\left(\frac{N-2s}{2r}\right) \int_{S_r^+} t^{1-2s} U^2(z) dS,
\end{equation*}
where $S_r^+=\{z=(x,t)\in \R^{N+1}_+: \, |z|=r\}$ and $dS$ denotes the volume element on $S_r^+$.
\end{lemma}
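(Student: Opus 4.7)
The plan is to reduce by density to the case of smooth $U \in C^\infty(\overline{B_r^+})$ and then derive the inequality from a single integration-by-parts identity involving the weighted radial vector field $V(z) = t^{1-2s}\, z/|z|^2$. A direct computation gives
\[
\mathop{\rm div}\!\left(t^{1-2s}\frac{z}{|z|^2}\right) = (N-2s)\,\frac{t^{1-2s}}{|z|^2}\quad\text{in }\R^{N+1}_+,
\]
since $\mathop{\rm div}(z)=N+1$ and $z\cdot\nabla(t^{1-2s}/|z|^2) = (1-2s)t^{1-2s}/|z|^2 - 2\,t^{1-2s}/|z|^2$.

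Next, I would apply the divergence theorem to the vector field $t^{1-2s}(z/|z|^2)\,U^2$ on $B_r^+$. The boundary of $B_r^+$ splits into the spherical part $S_r^+$ (with outer unit normal $z/|z|$, on which $V\cdot\nu = 1/r$) and the flat part $\{t=0\}\cap B_r'$ (with outer unit normal $-\mathbf{e}_{N+1}$, on which $V\cdot\nu = -t^{2-2s}/|z|^2$ vanishes as $t\to 0^+$ since $s<1$). This yields the key identity
\[
(N-2s)\int_{B_r^+}\! t^{1-2s}\frac{U^2}{|z|^2}\,dz \;+\; 2\int_{B_r^+}\! t^{1-2s}\,\frac{U\,(z\cdot\nabla U)}{|z|^2}\,dz \;=\; \frac{1}{r}\int_{S_r^+} t^{1-2s} U^2\,dS.
\]

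To conclude, for a free parameter $\beta\in\R$ I would expand the trivial inequality
\[
0\le \int_{B_r^+} t^{1-2s}\left(\frac{z\cdot\nabla U}{|z|} + \beta\,\frac{U}{|z|}\right)^{\!2}\,dz
\]
and substitute the above identity to eliminate the cross term $\int t^{1-2s} U(z\cdot\nabla U)/|z|^2$. This produces
\[
\bigl(\beta(N-2s)-\beta^2\bigr)\int_{B_r^+}\!\! t^{1-2s}\frac{U^2}{|z|^2}\,dz \;\le\; \int_{B_r^+}\!\! t^{1-2s}\!\left(\frac{z\cdot\nabla U}{|z|}\right)^{\!2}\!dz + \frac{\beta}{r}\int_{S_r^+}\! t^{1-2s} U^2\,dS.
\]
Optimizing over $\beta$ gives the choice $\beta=(N-2s)/2$, for which $\beta(N-2s)-\beta^2 = \bigl((N-2s)/2\bigr)^2$ and $\beta/r = (N-2s)/(2r)$, yielding exactly the stated inequality.

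The main technical obstacle is justifying the integration by parts, because the vector field $V$ is singular at the origin and the weight $t^{1-2s}$ is singular on $\{t=0\}$ when $s>1/2$. I would handle this by first carrying out the computation on the truncated region $B_r^+\setminus\overline{B_\eps^+}\cap\{t>\eps\}$ and letting $\eps\to 0^+$: the inner spherical boundary term is controlled by $\eps^{N-2s}\sup_{B_\eps^+}U^2\to 0$, while the contribution on $\{t=\eps\}$ carries the extra factor $\eps^{2-2s}\to 0$. Finally, the extension from smooth $U$ to general $U\in H^1(B_r^+;t^{1-2s})$ follows from the density of $C^\infty(\overline{B_r^+})$ and the continuity of the trace on $S_r^+$ in the weighted setting.
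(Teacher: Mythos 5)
Your argument is correct. The paper does not prove this lemma at all --- it is quoted verbatim from \cite[Lemma 2.4]{FallFelli14} --- and your derivation (the divergence identity for $t^{1-2s}z/|z|^2$, the Rellich-type integration by parts, and the completion of the square optimized at $\beta=(N-2s)/2$) is the standard proof and essentially the one in the cited reference. Two small points worth a word in a write-up: in the density step the left-hand side is not controlled by the $H^1(B_r^+;t^{1-2s})$-norm, so you should pass to the limit via Fatou along an a.e.\ convergent subsequence (the right-hand side converges by norm and weighted-trace continuity); and in low dimensions the flat-boundary term on $\{t=\eps\}$ is $\eps^{2-2s}\int U^2/(|x|^2+\eps^2)\,dx$, whose integral itself may blow up (like $\eps^{-1}$ for $N=1$), so its vanishing uses the full hypothesis $s<N/2$, not just $s<1$.
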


As a particular case of the inequality stated in Lemma
\ref{lem:Hardy}, we obtain the following
\begin{equation}\label{eq:8}
\left(\frac{N-2s}{2}\right)^2 \int_{\R^{N+1}_+} t^{1-2s} \frac{U^2(z)}{|z|^2} \,dz \leq
\int_{\R^{N+1}_+}t^{1-2s} | \nabla U(z)|^2 \,dz,
\end{equation}
for all $U\in \mathcal{D}^{1,2}(\R^{N+1}_+;t^{1-2s})$ and $s\in (0,\min\{1,N/2\})$. 

\subsection{Fractional capacities and capacitary potentials}
We observe that, by Stampacchia's Theorem, the infimum in Remark
\ref{rem:capacity} is achieved by a unique function
$V_{\Omega,K} \in
\mathcal{D}^{1,2}_{\Omega^c}(\R^{N+1}_+;t^{1-2s}) $,
with
$V_{\Omega,K}-\eta_K \in
\mathcal{D}^{1,2}_{\Omega^c\cup K}(\R^{N+1}_+;t^{1-2s}) $, so that
\begin{equation}\label{eq:grad_V_k}
\mathop{\rm Cap}\nolimits^s_\Omega(K)=\frac1{\kappa_s} q(V_{\Omega,K} ,V_{\Omega,K});
\end{equation}
moreover $V_{\Omega,K}$ satisfies
\[
q(V_{\Omega,K},v-V_{\Omega,K})\geq 0
\]
for all $v\in \mathcal{D}^{1,2}_{\Omega^c}(\R^{N+1}_+;t^{1-2s})$ with $v-\eta_K \in \mathcal{D}^{1,2}_{\Omega^c\cup K}(\R^{N+1}_+;t^{1-2s})$.
Equivalently, we have that $V_{\Omega,K}\in \mathcal{D}^{1,2}_{\Omega^c}(\R^{N+1}_+;t^{1-2s})$ is the unique function such that $V_{\Omega,K}-\eta_K \in \mathcal{D}^{1,2}_{\Omega^c\cup K}(\R^{N+1}_+;t^{1-2s})$ and
\begin{equation}\label{eq:V_K_weak}
q(V_{\Omega,K},\phi )=0 \qquad\text{for all } \phi\in \mathcal{D}^{1,2}_{\Omega^c\cup K}(\R^{N+1}_+;t^{1-2s}),
\end{equation}
that is to say, $V_{\Omega,K}$ is the unique weak solution of
\begin{equation}\label{eq:V_K}
\begin{cases}
-\text{div}(t^{1-2s}\nabla V_{\Omega,K})=0, \quad&\text{in } \R^{N+1}_+, \\
\lim_{t\to0^+}\left(-t^{1-2s} \partial_t V_{\Omega,K} \right)=0, &\text{in }
(\Omega\setminus K)\times\{0\}, \\
V_{\Omega,K}=0, &\text{in } (\R^N\setminus\Omega)\times\{0\}, \\
V_{\Omega,K}=1, &\text{in } K\times\{0\}.
\end{cases}
\end{equation}
We also observe that $\mathop{\rm Tr}V_{\Omega,K}$
  attains the infimum in Definition \ref{def:s-capacity}.

Since $V_{\Omega,K}^-$ and $(V_{\Omega,K}-1)^+$ belong to
  $\mathcal{D}^{1,2}_{\Omega^c\cup K}(\R^{N+1}_+;t^{1-2s})$, we can
  choose $\phi=V_{\Omega,K}^-$ and $\phi=(V_{\Omega,K}-1)^+$ in \eqref{eq:V_K_weak}; in
  this way we obtain that $V_{\Omega,K}^-=(V_{\Omega,K}-1)^+\equiv0$, that is
\begin{equation}\label{eq:10}
0\leq V_{\Omega,K}\leq 1\quad\text{a.e. in }\R^{N+1}_+.
\end{equation}

  \begin{example}[\bf Capacity of a point]\label{ex:singleton}
    If $\Omega\subset\R^N$ is an open set, $s\in
    (0,\min\{1,N/2\})$, and $P\in\Omega$, then  
 \begin{equation}\label{eq:11}
\mathop{\rm Cap}\nolimits^s_\Omega(\{P\})=0.
\end{equation}
Indeed, for every $n\in\N_*$, let $W_n\in
C^\infty(\R^{N+1})$ be such that 
$W_n(z)=1$ for $|z-P|\leq\frac1n$, 
$W_n(z)=0$ for $|z-P|\geq\frac2n$, and $|\nabla W_n(z)|\leq 2n$ for all
$z\in\R^{N+1}$.  
Then, for $n$ sufficiently large,  the restriction
$W_n\big|_{\R^{N+1}_+}$ belongs to
$\mathcal{D}^{1,2}_{\Omega^c}(\R^{N+1}_+;t^{1-2s})$ and is equal to
$1$ in a neighborhood of $\{P\}$. Moreover
\[
q(W_n,W_n)\leq {\rm const\,}n^2\int_{1/n}^{2/n}r^{N+1-2s}\,dr
=O(n^{2s-N})=o(1)\quad\text{as }n\to+\infty,
\]
thus proving  \eqref{eq:11}.
\end{example}

In order to prove that the spectrum of restricted fractional $s$-Laplacian
in $\Omega$  does not change by
removing a subset of zero fractional $s$-capacity, the following
result is needed.

\begin{proposition}\label{prop:cap0}
Let $\Omega\subset\R^N$ be an open set, $K\subset\Omega$  compact and $s\in (0,\min\{1,N/2\})$.
The following three assertions are equivalent:
\begin{itemize}
\item[(i)] $\mathop{\rm Cap}\nolimits^s_\Omega(K)=0$;
\item[(ii)] $\mathcal{D}^{1,2}_{\Omega^c}(\R^{N+1}_+;t^{1-2s})=\mathcal{D}^{1,2}_{\Omega^c\cup K}(\R^{N+1}_+;t^{1-2s})$;
\item[(iii)]
  $\mathcal D^{s,2}(\Omega)=\mathcal D^{s,2}(\Omega\setminus K)$.
\end{itemize}
\end{proposition}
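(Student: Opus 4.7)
My plan is to establish the trivial-looking equivalence (ii)$\iff$(iii) from the trace characterization \eqref{eq:15}, to derive (iii)$\Rightarrow$(i) directly from the definition of capacity using the test function $\zeta_K$, and to devote the main effort to (i)$\Rightarrow$(ii) via a cut-off procedure on a minimizing sequence in the extended formulation given by Remark~\ref{rem:capacity}.

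For (ii)$\iff$(iii), the crucial observation is that the characterization \eqref{eq:15} is a statement about an arbitrary bounded open set; applied to the open set $\Omega\setminus K$, it yields
\[
\mathcal D^{1,2}_{\Omega^c\cup K}(\R^{N+1}_+;t^{1-2s})=\{U\in\mathcal D^{1,2}(\R^{N+1}_+;t^{1-2s}):\mathop{\rm Tr}U\in\mathcal D^{s,2}(\Omega\setminus K)\}.
\]
Together with \eqref{eq:15} itself and the surjectivity of the trace \eqref{eq:trace}, this makes the equality of the two extension spaces equivalent to the equality of the two trace spaces. For (iii)$\Rightarrow$(i), since $\zeta_K\in C^\infty_c(\Omega)\subset\mathcal D^{s,2}(\Omega)=\mathcal D^{s,2}(\Omega\setminus K)$, I approximate $\zeta_K$ in the $\mathcal D^{s,2}$-norm by some $\varphi_n\in C^\infty_c(\Omega\setminus K)$; setting $u_n:=\zeta_K-\varphi_n$ gives $u_n-\zeta_K=-\varphi_n\in\mathcal D^{s,2}(\Omega\setminus K)$ and $\|u_n\|_{\mathcal D^{s,2}(\Omega)}\to 0$, so that $\mathop{\rm Cap}\nolimits^s_\Omega(K)=0$.

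For (i)$\Rightarrow$(ii), by Remark~\ref{rem:capacity} I pick $V_n\in\mathcal D^{1,2}_{\Omega^c}(\R^{N+1}_+;t^{1-2s})$ with $V_n-\eta_K\in\mathcal D^{1,2}_{\Omega^c\cup K}(\R^{N+1}_+;t^{1-2s})$ and $q(V_n,V_n)\to 0$; a lattice truncation analogous to the one giving \eqref{eq:10} allows me to assume $0\le V_n\le 1$. The Hardy inequality \eqref{eq:8} then gives $V_n\to 0$ in $L^2(\R^{N+1}_+;t^{1-2s}|z|^{-2})$, so along a subsequence $V_n\to 0$ almost everywhere. For arbitrary $\phi\in C^\infty_c(\R^{N+1}_+\cup\Omega)$ I write $\phi=(1-V_n)\phi+V_n\phi$. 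The first summand equals $(1-\eta_K)\phi+(\eta_K-V_n)\phi$: the first piece belongs to $C^\infty_c(\R^{N+1}_+\cup(\Omega\setminus K))$ because $\eta_K\equiv 1$ near $K$, while the second piece lies in $\mathcal D^{1,2}_{\Omega^c\cup K}$ because multiplication by $\phi$ is continuous on $\mathcal D^{1,2}(\R^{N+1}_+;t^{1-2s})$ and preserves the subspace (approximate $\eta_K-V_n$ by $\psi_m\in C^\infty_c(\R^{N+1}_+\cup(\Omega\setminus K))$ and observe $\phi\psi_m$ stays in the same test-function space and converges in $\mathcal D^{1,2}$ to $\phi(\eta_K-V_n)$). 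For the remainder, the product rule gives
\[
q(V_n\phi,V_n\phi)\leq 2\|\phi\|_\infty^2\,q(V_n,V_n)+2\int_{\R^{N+1}_+}t^{1-2s}V_n^2|\nabla\phi|^2\,dx\,dt,
\]
where the first term vanishes by hypothesis and the second by dominated convergence, using $0\le V_n\le 1$, $V_n\to 0$ a.e., and $t^{1-2s}|\nabla\phi|^2\in L^1(\R^{N+1}_+)$ (since $\phi$ is compactly supported and $1-2s>-1$). Closedness of $\mathcal D^{1,2}_{\Omega^c\cup K}$ then gives $\phi\in\mathcal D^{1,2}_{\Omega^c\cup K}$, and density yields $\mathcal D^{1,2}_{\Omega^c}\subseteq\mathcal D^{1,2}_{\Omega^c\cup K}$, the reverse inclusion being trivial.

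The main obstacle is the two \emph{rigidity} steps for Sobolev-regular functions: the lattice truncation producing $0\le V_n\le 1$ while keeping $V_n-\eta_K$ in the smaller space (which one verifies by working with the approximating sequence defining $V_n-\eta_K\in\mathcal D^{1,2}_{\Omega^c\cup K}$ and using that this class is stable under the compositions $u\mapsto u^-$ and $u\mapsto (u-1)^+$ as in the derivation of \eqref{eq:10}), and the continuity of multiplication by smooth compactly supported functions on the weighted Sobolev space, which together with the Hardy-type bound \eqref{eq:8} ensures preservation of $\mathcal D^{1,2}_{\Omega^c\cup K}$.
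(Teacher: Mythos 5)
Your proof is correct and, at its core, follows the same strategy as the paper: reduce everything to the extension spaces via \eqref{eq:15} and the surjectivity of the trace map \eqref{eq:trace}, and prove the substantive implication (i)$\Rightarrow$(ii) by multiplying an arbitrary element by one minus a small-energy function equal to $1$ near $K$, controlling the error through the Hardy inequality \eqref{eq:8}. The differences are in the execution. For (i)$\Rightarrow$(ii) the paper takes the cutoffs directly from Definition \ref{def:s-capacity} and Remark \ref{rem:capacity} as \emph{smooth} functions $\eta_n\in C^\infty_c(\R^{N+1}_+\cup\Omega)$ with $\eta_n\equiv1$ near $K$ and energy $<1/n$, and first approximates $u$ by smooth $u_\eps$, so that all products are classical and the error is bounded explicitly by $\tfrac1n$ times quantities depending on $u_\eps$; you instead use the (truncated) minimizing sequence $V_n$, which costs you the lattice-truncation step $0\le V_n\le1$ (asserted, like the paper's own \eqref{eq:10}, without detailed proof) plus an a.e.-convergence and dominated-convergence argument, but lets you keep the test function $\phi$ smooth and conclude by density; both routes hinge on the same Hardy bound \eqref{eq:8} to make the multiplication operator continuous. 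For the easy direction the paper goes (ii)$\Rightarrow$(i) by testing \eqref{eq:V_K_weak} with the capacitary potential, while your direct (iii)$\Rightarrow$(i) from the definition of capacity is equally valid and arguably more elementary. A small simplification of your Step (i)$\Rightarrow$(ii): since $\mathop{\rm Cap}\nolimits^s_\Omega(K)=0$ forces the minimizer $V_{\Omega,K}$ to vanish in $\mathcal{D}^{1,2}(\R^{N+1}_+;t^{1-2s})$, one gets $\eta_K\in\mathcal{D}^{1,2}_{\Omega^c\cup K}(\R^{N+1}_+;t^{1-2s})$ for free, and the decomposition $\phi=(1-\eta_K)\phi+\eta_K\phi$ together with your multiplication-continuity observation already concludes, avoiding both the truncation and the dominated-convergence steps.
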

\begin{proof}
  It will be sufficient to prove that (i) is equivalent to (ii), since
  then the equivalence of (iii) follows from the fact that the
  restriction to $\Omega$ of the trace map $\mathop{\rm Tr}$ defined
  in \eqref{eq:trace} is onto and the characterization
    of spaces given in \eqref{eq:15}.

Suppose first that $\mathcal{D}^{1,2}_{\Omega^c}(\R^{N+1}_+;t^{1-2s})=\mathcal{D}^{1,2}_{\Omega^c\cup K}(\R^{N+1}_+;t^{1-2s})$. 
Then we can take $\phi=V_{\Omega,K}$ as a test function in \eqref{eq:V_K_weak}, so that
\[
\mathop{\rm Cap}\nolimits^s_\Omega(K)=q(V_{\Omega,K},V_{\Omega,K})=0.
\]

Now suppose that $\mathop{\rm Cap}\nolimits^s_\Omega(K)=0$.  We have
to show
$\mathcal{D}^{1,2}_{\Omega^c}(\R^{N+1}_+;t^{1-2s})\subset
\mathcal{D}^{1,2}_{\Omega^c\cup K}(\R^{N+1}_+;t^{1-2s})$,
the other inclusion being evident.  
 To this aim, let
$u\in \mathcal{D}^{1,2}_{\Omega^c}(\R^{N+1}_+;t^{1-2s})$.
By the assumption that
    $\mathop{\rm Cap}\nolimits^s_\Omega(K)=0$, for any $n\in\N$ there
  exists 
  $\eta_n\in C^\infty_c(\R^{N+1}_+\cup\Omega)$ such
  that $\eta_n \equiv 1$ in a neighborhood of $K$ and
\[
 \int_{\R^{N+1}_+} t^{1-2s} |\nabla \eta_n|^2 \,dx\,dt< \frac1n. 
\]
On the other hand, by density of
  $C^\infty_c(\R^{N+1}_+\cup \Omega)$ in
  $\mathcal{D}^{1,2}_{\Omega^c}(\R^{N+1}_+;t^{1-2s})$, for any
$\eps>0$ there 
exists $u_\eps\in C^\infty_c(\R^{N+1}_+\cup \Omega)$ such that
\[
 \|u_\eps-u\|^2_{\mathcal{D}^{1,2}(\R^{N+1}_+;t^{1-2s})}<\eps.
\]
In this way, the function $u_\eps(1-\eta_n) \in 
C^\infty_c(\R^{N+1}_+\cup (\Omega\setminus K))$; 
we estimate
\begin{align*}
  \int_{\R^{N+1}_+}& t^{1-2s} |\nabla (u_\eps(1-\eta_n) - u)|^2\,dx\,dt 
  = \int_{\R^{N+1}_+} t^{1-2s} |\nabla u_\eps - \nabla u - \nabla (\eta_n u_\eps)|^2\,dx\,dt \\
  &\leq 2  \int_{\R^{N+1}_+} t^{1-2s} |\nabla u_\eps - \nabla u|^2
    \,dx\,dt+
 2 \int_{\R^{N+1}_+} t^{1-2s} |\nabla (\eta_n u_\eps)|^2\,dx\,dt \\
&\leq 2\eps + 4 \int_{\R^{N+1}_+} t^{1-2s}|u_\eps|^2 |\nabla \eta_n
  |^2\,dx\,dt
 + 4 \int_{\R^{N+1}_+} t^{1-2s} |\eta_n|^2 |\nabla u_\eps|^2\,dx\,dt\\
&\leq 2\eps + \frac4n \sup|u_\eps|^2 + 4 \left(\sup |\nabla u_\eps|^2
  \right)
\int_{\mathop{\rm supp}
u_\eps} t^{1-2s} |\eta_n|^2\,dx\,dt \\
&\leq 2\eps + \frac4n \sup|u_\eps|^2 + \frac4n \left( \frac{2}{N-2s}
  \right)^2
 \sup |\nabla u_\eps|^2 \sup_{z\in\mathop{\rm supp} u_\eps}|z|^2,
\end{align*}
where the last relation relies on \eqref{eq:8}.

 This proves that $u$
  can be approximated in $\mathcal{D}^{1,2}(\R^{N+1}_+;t^{1-2s})$ with
  $C^\infty_c(\R^{N+1}_+\cup (\Omega\setminus K))$-functions, so that
  $u\in \mathcal{D}^{1,2}_{\Omega^c\cup K}(\R^{N+1}_+;t^{1-2s})$. 
\end{proof}

As a direct consequence of Proposition \ref{prop:cap0},
  we obtain that the removal of a zero fractional $s$-capacity set
  leaves the family of eigenvalues of $(-\Delta)^s$ unchanged.

\begin{corollary}
  Let $\Omega\subset\R^N$ be a bounded open set, $K\subset\Omega$
  compact and $s\in (0,\min\{1,N/2\})$.  It holds
  $\lambda_k^s(\Omega)=\lambda_k^s(\Omega\setminus K)$ for every
  $k\in \N_*$ if and only if
  $\mathop{\rm Cap}\nolimits^s_\Omega(K)=0$.
\end{corollary}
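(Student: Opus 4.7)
The plan is to prove both directions using the equivalence already established in Proposition \ref{prop:cap0}, which reduces the problem to relating equality of the sequences of eigenvalues to equality of the function spaces $\mathcal D^{s,2}(\Omega)$ and $\mathcal D^{s,2}(\Omega\setminus K)$. Throughout, write $H_0=\mathcal D^{s,2}(\Omega)$ and $H_1=\mathcal D^{s,2}(\Omega\setminus K)$, so that $H_1\subseteq H_0$ with the same Hilbert inner product inherited from \eqref{eq:21}, and $H_1$ is a closed subspace of $H_0$.

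The ``if'' direction is almost immediate: assuming $\mathop{\rm Cap}\nolimits^s_\Omega(K)=0$, Proposition \ref{prop:cap0} yields $H_0=H_1$. Since the eigenvalues on $\Omega$ and on $\Omega\setminus K$ admit the Courant--Fischer minimax characterization \eqref{eq:minimax} over the very same space (or equivalently, the Rayleigh quotient is taken over the same function space in its intrinsic form without extension), we get $\lambda_k^s(\Omega)=\lambda_k^s(\Omega\setminus K)$ for every $k\in\mathbb N_*$.

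For the ``only if'' direction I would argue by contrapositive. Suppose $\mathop{\rm Cap}\nolimits^s_\Omega(K)>0$; then by Proposition \ref{prop:cap0}, $H_1\subsetneq H_0$ strictly. Let $\{\phi_k\}_{k\in\mathbb N_*}$ be an $L^2(\Omega)$-orthonormal basis of eigenfunctions associated to $\lambda_k^s(\Omega)$. I plan to show by induction on $k$ that, if $\lambda_k^s(\Omega\setminus K)=\lambda_k^s(\Omega)$ for every $k$, then the full eigenspace $E_k$ of $\lambda_k^s(\Omega)$ in $H_0$ is already contained in $H_1$. For the base step, $\lambda_1^s$ equals both $\min_{u\in H_0}\mathcal R(u)$ and $\min_{u\in H_1}\mathcal R(u)$; any minimizer in $H_1\subset H_0$ is automatically a minimizer in $H_0$, hence lies in $E_1$, giving an inclusion of eigenspaces of the restricted problem into $E_1$; since the multiplicities coincide (as the full sequences coincide), the two eigenspaces are equal and $E_1\subset H_1$. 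For the inductive step, consider the minimization of $\mathcal R$ on $H_i\cap(E_1\oplus\cdots\oplus E_{k-1})^{\perp_{L^2}}$, $i=0,1$; since the orthogonal complement is the same on both sides by induction hypothesis, the same argument produces $E_k\subset H_1$.

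Once every $\phi_k$ lies in $H_1$, their finite linear combinations form a dense subset of $H_0$: indeed, any $u\in H_0$ expands as $u=\sum_k c_k\phi_k$ with $\sum_k\lambda_k^s(\Omega)|c_k|^2=\|u\|_{H_0}^2<+\infty$, so partial sums converge to $u$ in the $H_0$-norm. Since $H_1$ is closed in $H_0$, it follows that $H_0=H_1$, contradicting the strict inclusion. Hence some eigenvalue must differ, completing the contrapositive. The only subtle step I anticipate is the inductive matching of eigenspaces, where one has to be careful that a minimizer in the smaller space is indeed a critical point of $\mathcal R$ on the ambient larger space, and that the multiplicity bookkeeping (guaranteed by the full equality of eigenvalue sequences with multiplicity) forces the two eigenspaces to coincide rather than just satisfy an inclusion.
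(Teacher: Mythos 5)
Your proposal is correct and follows essentially the same route as the paper, whose proof is a one-line appeal to Proposition \ref{prop:cap0}, the minimax characterization \eqref{eq:minimax}, and the Spectral Theorem — exactly the three ingredients you use. You have simply supplied the details the paper leaves implicit: the inductive identification of eigenspaces and the completeness of the eigenfunctions in $\mathcal D^{s,2}(\Omega)$ (which holds because the solution operator is compact, self-adjoint and injective), both of which are sound.
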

\begin{proof}
The result follows from Proposition \ref{prop:cap0} combined with
\eqref{eq:minimax} and the Spectral Theorem.
\end{proof}

\begin{remark}\label{rem:heinonen}
In the case $\Omega=\R^N$ and $K\subset\R^N$ compact, it holds
\[
2 \mathop{\rm Cap}\nolimits^s_{\R^N}(K)=\mathop{\rm Cap}\nolimits_{2,|t|^{1-2s}}(K,\R^{N+1}),
\]
where the right hand side of the above expression is the
$(2,|t|^{1-2s})$-capacity of the condenser $(K,\R^{N+1})$, as
introduced in \cite[Chapter 2]{heinonen2012nonlinear}. To see this, it
suffices to consider the function $V_K:=V_{\R^N,K}$ that achieves
$\mathop{\rm Cap}\nolimits^s_{\R^N}(K)$ and its even extension
\[
\tilde V_K(x,t)=\begin{cases}
V_K(x,t),&\text{if }t\geq0, \\
V_K(x,-t),&\text{if }t<0,
\end{cases}
\]
and to notice that
\[
\mathop{\rm Cap}\nolimits^s_{\R^N}(K)
=\frac{1}{2} \int_{\R^{N+1}} |t|^{1-2s} |\nabla\tilde V_K|^2 \,dx\,dt
=\frac{1}{2} \mathop{\rm Cap}\nolimits_{2,|t|^{1-2s}}(K,\R^{N+1}).
\]
We remark that $|t|^{1-2s}$ is a 2-admissible weight (according to the definition given in \cite[Chapter 2]{heinonen2012nonlinear}), since $|t|^{1-2s}$ belongs to the Muckenhoupt class $A_2$.
\end{remark}

Concerning the $s$-fractional $u$-capacity of $K$ in $\Omega$
introduced in Definition \ref{def:U-capacity} and characterized
equivalently in \eqref{eq:16}, we have that, as it happens for
$\mathop{\rm Cap}\nolimits^s_{\Omega}(K)$, the
infimum in \eqref{eq:16}  is achieved by a function
$V_{\Omega,K,u}\in \mathcal{D}^{1,2}_{\Omega^c}(\R^{N+1}_+;t^{1-2s})$
and the infimum in \eqref{eq:U_capacity_def} by $\mathop{\rm Tr}V_{\Omega,K,u}$, so
that
\begin{equation}\label{eq:V_K_U_def}
\mathop{\rm Cap}\nolimits^s_\Omega(K,u)= \frac1{\kappa_s}q(
V_{\Omega,K,u}, V_{\Omega,K,u})
=\|\mathop{\rm Tr}V_{\Omega,K,u}\|_{\mathcal D^{s,2}(\Omega)}^2,
\end{equation}
and $V_{\Omega,K,u}$ is the unique weak solution of
\begin{equation}\label{eq:V_K_U}
\begin{cases}
-\text{div}(t^{1-2s} \nabla V_{\Omega,K,u})=0, \quad&\text{in } \R^{N+1}_+, \\
\lim_{t\to0^+}\left(-t^{1-2s} \partial_t V_{\Omega,K,u} \right)=0,
&\text{in }(\Omega\setminus K)\times\{0\},\\
 V_{\Omega,K,u}=0, &\text{in } (\R^N\setminus\Omega)\times\{0\}, \\
V_{\Omega,K,u}=u, &\text{in } K\times\{0\},
\end{cases}
\end{equation}
in the sense that $V_{\Omega,K,u}\in
\mathcal{D}^{1,2}_{\Omega^c}(\R^{N+1}_+;t^{1-2s})$, $V_{\Omega,K,u} -U \in
\mathcal{D}^{1,2}_{\Omega^c\cup K}(\R^{N+1}_+;t^{1-2s})$
for some function $U\in  \mathcal{D}^{1,2}_{\Omega^c}(\R^{N+1}_+;t^{1-2s})$ 
such that $\mathop{\rm Tr}U=u$, 
and
\begin{equation}\label{eq:V_K_U_weak}
q(V_{\Omega,K,u},\phi )=0 \quad\text{for all } \phi\in \mathcal{D}^{1,2}_{\Omega^c\cup K}(\R^{N+1}_+;t^{1-2s}).
\end{equation}

\section{Continuity of the eigenvalue variation}\label{sec:continuity}

\begin{proof}[Proof of Theorem \ref{thm:continuity}]
 For every $j\in\{1,2,\dots,k\}$, let $\lambda_j^s(\Omega)$ and $U_j\in
 \mathcal{D}^{1,2}_{\Omega^c}(\R^{N+1}_+;t^{1-2s})$ solve \eqref{eq:realization_s} and \eqref{eq:normalization}.
Moreover we can choose the eigenfunctions $U_j$ in
  such a way that 
\begin{equation}\label{eq:orth}
\int_{\Omega}\mathop{\rm Tr}U_j(x) \mathop{\rm Tr}U_\ell(x)\,dx=0 \quad\text{for }j\neq\ell.
\end{equation}
Let us denote $u_j=\mathop{\rm Tr}U_j$ for all $j$. Let 
\[
E=\mathop{\rm span}\{\Phi_j:j=1,2,\dots,k\}
\ \subset  \mathcal{D}^{1,2}_{\Omega^c\cup K}(\R^{N+1}_+;t^{1-2s})
\]
where $\Phi_j=U_j(1-V_{\Omega,K})$ and $V_{\Omega,K}$ is the capacitary potential of $K$ satisfying \eqref{eq:V_K_weak}--\eqref{eq:V_K}.
We denote $\varphi_j=\mathop{\rm Tr}\Phi_j$ for all $j$ and $v_{\Omega,K}=\mathop{\rm Tr}V_{\Omega,K}$.
We observe that, in view of \eqref{eq:normalization}, \eqref{eq:orth}, \eqref{eq:hardy_bdd} and Lemma \ref{l:bk}, we have, for all $j,\ell\in \{1,\ldots,k\}$, 
\begin{align}\label{eq:5}
  \bigg|\int_\Omega \varphi_j(x) \varphi_\ell(x)\,dx-
  \delta_{j\ell}\bigg|&=\bigg|-2 \int_\Omega u_j(x) u_\ell(x)v_{\Omega,K}(x)\,dx+
\int_\Omega u_j(x) u_\ell(x)v_{\Omega,K}^2(x)\,dx \bigg|\\
\notag&\leq \left(\max_{1\leq j\leq k}\|u_j\|_{L^\infty(\Omega)}\right)^{\!2}\left(2\int_\Omega|v_{\Omega,K}(x)|\,dx
+\int_\Omega v_{\Omega,K}^2(x)\,dx\right)\\
\notag&\leq C \left((\mathop{\rm Cap}\nolimits^s_\Omega(K))^{1/2}+\mathop{\rm Cap}\nolimits^s_\Omega(K)\right)
\end{align}
for some constant $C>0$ independent of $K$.
On the other hand
\begin{multline*}
q(\Phi_j,\Phi_\ell) =
\int_{\R^{N+1}_+} t^{1-2s} (1-V_{\Omega,K})^2\nabla U_j\cdot \nabla U_\ell 
  \,dx\,dt +
\int_{\R^{N+1}_+} t^{1-2s} U_jU_\ell|\nabla V_{\Omega,K}|^2\,dx\,dt\\
- \int_{\R^{N+1}_+} t^{1-2s} U_j(1-V_{\Omega,K})\nabla U_\ell\cdot \nabla V_{\Omega,K}
  \,dx\,dt  -
\int_{\R^{N+1}_+} t^{1-2s} U_\ell(1-V_{\Omega,K})\nabla U_j\cdot \nabla V_{\Omega,K}
  \,dx\,dt  .
\end{multline*}
Choosing $\phi=U_\ell(1-V_{\Omega,K})^2$ in \eqref{eq:realization_s_w} we obtain that 
\begin{align*}
  \int_{\R^{N+1}_+} t^{1-2s}(1-V_{\Omega,K})^2\nabla U_j\cdot \nabla U_\ell
  \,dx\,dt&=2
  \int_{\R^{N+1}_+} t^{1-2s}(1-V_{\Omega,K})U_\ell\nabla U_j\cdot \nabla V_{\Omega,K}
  \,dx\,dt\\
&\quad+\kappa_s\lambda_j^s(\Omega)\int_{\Omega}\varphi_j(x) \varphi_\ell(x)\,dx,
\end{align*}
hence, thanks to Lemma \ref{l:bk} and \eqref{eq:5}, for every $j,\ell\in\{1,2,\dots,k\}$,
\begin{align}\label{eq:9}
\bigg|&\int_{\R^{N+1}_+} t^{1-2s} \nabla \Phi_j\cdot \nabla \Phi_\ell
  \,dx\,dt-\kappa_s\lambda_j^s(\Omega)\delta_{j\ell}\bigg|\\
\notag&\quad=\bigg|
  \int_{\R^{N+1}_+} t^{1-2s}(1-V_{\Omega,K})U_\ell\nabla U_j\cdot \nabla V_{\Omega,K}
  \,dx\,dt+\kappa_s\lambda_j^s(\Omega)\left(\int_{\Omega}\varphi_j(x) 
\varphi_\ell(x)\,dx-\delta_{j\ell}\right)\\
\notag&\quad\qquad+
\int_{\R^{N+1}_+} t^{1-2s} U_jU_\ell|\nabla V_{\Omega,K}|^2\,dx\,dt-
\int_{\R^{N+1}_+} t^{1-2s} U_j(1-V_{\Omega,K})\nabla U_\ell\cdot \nabla V_{\Omega,K}
  \,dx\,dt  \bigg|\\
\notag&\quad\leq C \left((\mathop{\rm Cap}\nolimits^s_\Omega(K))^{1/2}+\mathop{\rm Cap}\nolimits^s_\Omega(K)\right)
\end{align}
for some constant $C>0$ independent of $K$. The above estimate implies
there exists $\delta>0$ independent of $K$  such 
that $\Phi_1,\Phi_2,\dots,\Phi_k$ are linearly independent 
provided $\mathop{\rm Cap}\nolimits^s_\Omega(K)<\delta$, so that $E$
is a $k$-dimensional subspace of $\mathcal{D}^{1,2}_{\Omega^c\cup K}(\R^{N+1}_+;t^{1-2s})$ for $\mathop{\rm Cap}\nolimits^s_\Omega(K)<\delta$.

From \eqref{eq:minimax}, the fact that $\lambda_i^s(\Omega)\leq
\lambda_k^s(\Omega)$ for all $i\leq k$, \eqref{eq:5} and \eqref{eq:9} we have that 
\begin{align*}
  \lambda_k^s(\Omega\setminus K)&\leq
                                \max_{\substack{(\alpha_1,\alpha_2,\dots,\alpha_k)\in
                                \R^k\\
\sum_{i=1}^k\alpha_i^2=1}}\mathcal
  R\left(\sum_{i=1}^k\alpha_i\Phi_i\right)\\
&=\max_{\substack{(\alpha_1,\alpha_2,\dots,\alpha_k)\in
                                \R^k\\
\sum_{i=1}^k\alpha_i^2=1}}\frac{\sum_{i,j=1}^k\alpha_i\alpha_j 
q(\Phi_i,\Phi_j)}{\kappa_s \sum_{i,j=1}^k\alpha_i\alpha_j \int_{\Omega} \varphi_i\varphi_j\,dx}\\
&=\max_{\substack{(\alpha_1,\alpha_2,\dots,\alpha_k)\in
                                \R^k\\
\sum_{i=1}^k\alpha_i^2=1}}\frac{(\sum_{i=1}^k\alpha_i^2\kappa_s\lambda_i^s(\Omega))+O\left((\mathop{\rm Cap}\nolimits^s_\Omega(K))^{1/2}\right)}
{\kappa_s\left(1+O\left((\mathop{\rm
  Cap}\nolimits^s_\Omega(K))^{1/2}\right)\right)}\\
&\leq\frac{\kappa_s\lambda_k^s(\Omega)+O\left((\mathop{\rm Cap}\nolimits^s_\Omega(K))^{1/2}\right)}
{\kappa_s\left(1+O\left((\mathop{\rm
  Cap}\nolimits^s_\Omega(K))^{1/2}\right)\right)}=\lambda_k^s(\Omega)+O\left((\mathop{\rm Cap}\nolimits^s_\Omega(K))^{1/2}\right)\\
\end{align*}
as $\mathop{\rm Cap}\nolimits^s_\Omega(K)\to 0$. The proof is thereby complete.
\end{proof}

\section{Asymptotic expansion of the eigenvalues under removal of small capacity sets}\label{sec:expansion}

The aim of this section is to prove Theorem \ref{thm:expansion}.  The
proof is inspired from that of \cite[Theorem 1.4]{AFHL}.  Let us start
with some  preliminary lemmas concerning the capacitary potential
$V_{\Omega,K,u}$ defined in \eqref{eq:V_K_U_def}--\eqref{eq:V_K_U}.

\begin{lemma}\label{lem:trace_o_capacity}
Let $\{K_\varepsilon\}_{\varepsilon>0}$ be a family of compact sets
contained in the open set $\Omega$ concentrating, in the sense of Definition \ref{def:concentrates}, to a compact set $K\subset\Omega$,
with $\mathop{\rm Cap}\nolimits^s_\Omega(K)=0$. For every 
$u\in \mathcal D^{s,2}(\Omega)$ it holds
\begin{equation}
\int_\Omega |\mathop{\rm Tr}V_{\Omega,K_\varepsilon,u}|^2\,dx 
=o(\mathop{\rm Cap}\nolimits^s_\Omega(K_\varepsilon,u))
\qquad\text{as } \varepsilon\to 0.
\end{equation}
\end{lemma}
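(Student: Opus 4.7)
The plan is to argue by contradiction using a suitable normalization and a compactness argument based on the Caffarelli--Silvestre extension. First, observe that if $\mathop{\rm Cap}\nolimits^s_\Omega(K_\varepsilon,u)=0$ for some $\varepsilon>0$ then $V_{\Omega,K_\varepsilon,u}\equiv 0$ by \eqref{eq:V_K_U_def} and the claim is trivial. Hence, up to extraction, we may assume $\mathop{\rm Cap}\nolimits^s_\Omega(K_{\varepsilon_n},u)>0$ for a sequence $\varepsilon_n\to0^+$ along which, by contradiction,
\[
\int_\Omega|\mathop{\rm Tr}V_{\Omega,K_{\varepsilon_n},u}|^2\,dx\geq \delta\,\mathop{\rm Cap}\nolimits^s_\Omega(K_{\varepsilon_n},u)
\]
for some $\delta>0$.

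The next step is to normalize: set $\tilde V_n:=V_{\Omega,K_{\varepsilon_n},u}/\sqrt{\mathop{\rm Cap}\nolimits^s_\Omega(K_{\varepsilon_n},u)}$, so that $q(\tilde V_n,\tilde V_n)=\kappa_s$ by \eqref{eq:V_K_U_def} while $\|\mathop{\rm Tr}\tilde V_n\|_{L^2(\Omega)}^2\geq\delta$. Since $\tilde V_n$ is bounded in $\mathcal D^{1,2}_{\Omega^c}(\R^{N+1}_+;t^{1-2s})$, it converges weakly, along a subsequence, to some $\tilde V_0\in \mathcal D^{1,2}_{\Omega^c}(\R^{N+1}_+;t^{1-2s})$. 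The continuity of the trace \eqref{eq:trace}, combined with the compact embedding $\mathcal D^{s,2}(\Omega)\hookrightarrow L^2(\Omega)$ (which follows from the compactness of the inverse of $(-\Delta)^s$ on the bounded set $\Omega$ recalled in the introduction), yields strong convergence $\mathop{\rm Tr}\tilde V_n\to\mathop{\rm Tr}\tilde V_0$ in $L^2(\Omega)$, so $\|\mathop{\rm Tr}\tilde V_0\|_{L^2(\Omega)}^2\geq\delta>0$.

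The crucial step uses the concentration hypothesis to pass to the limit in the weak equation \eqref{eq:V_K_U_weak}. For any $\phi\in C^\infty_c(\R^{N+1}_+\cup(\Omega\setminus K))$, the support of $\phi|_{\R^N}$ is contained in a compact subset of $\Omega\setminus K$; by Definition \ref{def:concentrates}, for $n$ large $K_{\varepsilon_n}$ is disjoint from this support, so $\phi\in \mathcal D^{1,2}_{\Omega^c\cup K_{\varepsilon_n}}(\R^{N+1}_+;t^{1-2s})$ and hence $q(\tilde V_n,\phi)=0$. Weak convergence gives $q(\tilde V_0,\phi)=0$ for all such $\phi$. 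By definition, these functions are dense in $\mathcal D^{1,2}_{\Omega^c\cup K}(\R^{N+1}_+;t^{1-2s})$, and by Proposition \ref{prop:cap0} the assumption $\mathop{\rm Cap}\nolimits^s_\Omega(K)=0$ gives $\mathcal D^{1,2}_{\Omega^c\cup K}(\R^{N+1}_+;t^{1-2s})=\mathcal D^{1,2}_{\Omega^c}(\R^{N+1}_+;t^{1-2s})$. In particular $\tilde V_0$ itself is an admissible test function, yielding $q(\tilde V_0,\tilde V_0)=0$, so $\nabla\tilde V_0\equiv 0$ in $\R^{N+1}_+$; since $\tilde V_0\in\mathcal D^{1,2}_{\Omega^c}(\R^{N+1}_+;t^{1-2s})$ and $\Omega$ is bounded, the Hardy inequality \eqref{eq:8} forces $\tilde V_0\equiv 0$, contradicting $\|\mathop{\rm Tr}\tilde V_0\|_{L^2(\Omega)}^2\geq\delta$.

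The main obstacle is the passage to the limit in the weak equation: one must exploit the concentration of $K_{\varepsilon_n}$ to $K$ together with the zero fractional capacity of $K$ (via Proposition \ref{prop:cap0}) to enlarge the class of admissible test functions in the limit and conclude that the weak limit satisfies the equation against itself. The rest (normalization, compact trace, Hardy rigidity) is standard.
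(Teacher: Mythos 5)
Your proposal is correct and follows essentially the same route as the paper's proof: argue by contradiction, normalize the capacitary potentials, extract a weak limit, use compactness of the trace to keep the $L^2$ mass in the limit, pass to the limit in the weak equation with test functions in $C^\infty_c(\R^{N+1}_+\cup(\Omega\setminus K))$ (admissible for small $\varepsilon$ by the concentration hypothesis), and invoke Proposition \ref{prop:cap0} to test the limit equation against the limit itself. The only differences are cosmetic (the paper normalizes by $\|\mathop{\rm Tr}V_{\Omega,K_{\varepsilon_n},u}\|_{L^2(\Omega)}$ rather than by $\sqrt{\mathop{\rm Cap}^s_\Omega(K_{\varepsilon_n},u)}$, and your final Hardy step is redundant since $q(\tilde V_0,\tilde V_0)=0$ already means the $\mathcal D^{1,2}$-norm vanishes).
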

\begin{proof}
Let $\mathcal{H}_\varepsilon=\mathcal{D}^{1,2}_{\Omega^c\cup K_\varepsilon}(\R^{N+1}_+;t^{1-2s})$.
Suppose by contradiction that there exist a sequence $\varepsilon_n\to0$, $\varepsilon_n>0$, and a constant $C>0$ such that
\[
\int_\Omega |\mathop{\rm Tr}V_{\Omega,K_{\varepsilon_n},u}|^2\,dx 
\geq C \|V_{\Omega,K_{\varepsilon_n},u}\|^2_{\mathcal{D}^{1,2}_{\Omega^c}(\R^{N+1}_+;t^{1-2s})}
\]
for every $n$. Letting
\[
W_n=\frac{V_{\Omega,K_{\varepsilon_n},u}}{\| \mathop{\rm Tr} V_{K_{\Omega,K_{\varepsilon_n},u}}\|_{L^2(\Omega)}},
\]
we have
\[
\|\mathop{\rm Tr} W_n\|_{L^2(\Omega)}=1 \qquad\text{and}\qquad
\|W_n\|^2_{\mathcal{D}^{1,2}_{\Omega^c}(\R^{N+1}_+;t^{1-2s})} \leq C^{-1}
\]
for every $n$. 
By weak compactness of the unit ball of
$\mathcal{D}^{1,2}_{\Omega^c}(\R^{N+1}_+;t^{1-2s})$ and by compactness
of the trace operator $\mathop{\rm
  Tr}:\mathcal{D}^{1,2}_{\Omega^c}(\R^{N+1}_+;t^{1-2s}) \to
L^2(\Omega)$ (which follows easily by combining the continuity of
  the trace map
  $\mathop{\rm Tr}: \mathcal{D}^{1,2}_{\Omega^c}(\R^{N+1}_+;t^{1-2s})
  \to \mathcal{D}^{s,2}(\Omega)$ and part (iii) of Theorem
  \ref{thm:immersions}), there exist a subsequence $(n_k)_{k\geq1}$ and $W\in \mathcal{D}^{1,2}_{\Omega^c}(\R^{N+1}_+;t^{1-2s})$ such that
\begin{equation}\label{eq:W}
W_{n_k} \rightharpoonup W \quad \text{in }\mathcal{D}^{1,2}_{\Omega^c}(\R^{N+1}_+;t^{1-2s}) \text{ as }k\to+\infty 
\qquad\text{and}\qquad
\|\mathop{\rm Tr} W\|_{L^2(\Omega)}=1.
\end{equation}
Moreover, from \eqref{eq:V_K_U_weak} we deduce that
\[
\int_{\R^{N+1}_+} t^{1-2s} \nabla W_{n_k}\cdot\nabla\phi\,dx\,dt=0
\qquad\text{for every } \phi \in \mathcal{H}_{\varepsilon_{n_k}}.
\]
For every $\phi\in C^\infty_c(\R^{N+1}_+\cup (\Omega\setminus K))$, we have that $\phi\in \mathcal{H}_{\varepsilon}$ for $\varepsilon$ sufficiently small.
Therefore we can pass to the limit as $k\to+\infty$
above and obtain
\[
\int_{\R^{N+1}_+} t^{1-2s} \nabla W\cdot\nabla\phi\,dx\,dt=0
\qquad\text{for every } \phi \in C^\infty_c(\R^{N+1}_+\cup (\Omega\setminus K)).
\]
By density, the latter holds for every $\phi \in \mathcal{D}^{1,2}_{\Omega^c\cup K}(\R^{N+1}_+;t^{1-2s})$. Now, the assumption $\mathop{\rm Cap}\nolimits^s_\Omega(K)=0$ allows to deduce, through Proposition \ref{prop:cap0},
\[
\int_{\R^{N+1}_+} t^{1-2s} \nabla W\cdot\nabla\phi\,dx\,dt=0
\qquad\text{for every } \phi \in \mathcal{D}^{1,2}_{\Omega^c}(\R^{N+1}_+;t^{1-2s}).
\]
Hence we can replace $\phi=W$ in the previous identity thus obtaining that
  $\|W\|^2_{\mathcal{D}^{1,2}(\R^{N+1}_+;t^{1-2s})}=0$ and hence
$W\equiv0$ in $\overline{\R^{N+1}_+}$, thus contradicting \eqref{eq:W}.
\end{proof}

\begin{lemma}\label{lem:cap_cont}
Let $\{K_\varepsilon\}_{\varepsilon>0}$ be a family of compact sets
contained in the open set $\Omega$ concentrating, in the sense of Definition \ref{def:concentrates}, to a compact set
$K\subset\Omega$, with $\mathop{\rm Cap}\nolimits^s_\Omega(K)=0$. For
every $u\in \mathcal{D}^{s,2}(\Omega)$ it holds
\[
\lim_{\varepsilon\to0^+} \mathop{\rm Cap}\nolimits^s_\Omega(K_\varepsilon,u)
=\mathop{\rm Cap}\nolimits^s_\Omega(K,u)=0
\qquad\text{and}\qquad
V_{\Omega,K_\varepsilon,u}\to V_{\Omega,K,u}\equiv 0
\]
strongly in $\mathcal{D}^{1,2}(\R^{N+1}_+;t^{1-2s})$ as $\varepsilon\to0^+$.
\end{lemma}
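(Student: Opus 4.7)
The plan is to first dispatch the easy equalities $\mathop{\rm Cap}\nolimits^s_\Omega(K,u)=0$ and $V_{\Omega,K,u}\equiv 0$, then establish a uniform bound for $\{V_{\Omega,K_\varepsilon,u}\}$ in $\mathcal{D}^{1,2}(\R^{N+1}_+;t^{1-2s})$, identify the only possible weak subsequential limit as zero, and finally upgrade the weak convergence to strong convergence by a standard energy-balance trick.

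First, since $\mathop{\rm Cap}\nolimits^s_\Omega(K)=0$, Proposition \ref{prop:cap0} gives $\mathcal{D}^{1,2}_{\Omega^c}(\R^{N+1}_+;t^{1-2s})=\mathcal{D}^{1,2}_{\Omega^c\cup K}(\R^{N+1}_+;t^{1-2s})$. Choosing $U=\mathcal{H}(u)$ in the equivalent characterization \eqref{eq:16}, the constant function $W=0$ is admissible, since $W-U=-U$ lies in $\mathcal{D}^{1,2}_{\Omega^c}(\R^{N+1}_+;t^{1-2s})=\mathcal{D}^{1,2}_{\Omega^c\cup K}(\R^{N+1}_+;t^{1-2s})$. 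Hence the infimum in \eqref{eq:16} is zero, so $\mathop{\rm Cap}\nolimits^s_\Omega(K,u)=0$ and $V_{\Omega,K,u}\equiv 0$.

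Next, for the uniform bound, I would invoke the very same cut-offs constructed in the proof of Proposition \ref{prop:cap0}: for each $n\in\N_*$ there exists $\eta_n\in C^\infty_c(\R^{N+1}_+\cup\Omega)$ with $\eta_n\equiv 1$ in an open neighborhood $\mathcal N_n$ of $K$ and $q(\eta_n,\eta_n)<1/n$. Fix such an $\eta_n$. Since $\mathcal N_n\cap\Omega$ is an open neighborhood of $K$ in $\Omega$, the concentration assumption (Definition \ref{def:concentrates}) yields $K_\varepsilon\subset\mathcal N_n$ for all $\varepsilon$ sufficiently small. Setting $U=\mathcal{H}(u)$, the function $w_\varepsilon:=\eta_n U$ lies in $\mathcal{D}^{1,2}_{\Omega^c}(\R^{N+1}_+;t^{1-2s})$ and $w_\varepsilon-U=(\eta_n-1)U$ vanishes on $\mathcal N_n\supset K_\varepsilon$, hence belongs to $\mathcal{D}^{1,2}_{\Omega^c\cup K_\varepsilon}(\R^{N+1}_+;t^{1-2s})$ by a standard cut-off approximation. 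Thus $w_\varepsilon$ is admissible in \eqref{eq:16} for $(K_\varepsilon,u)$, and so $\kappa_s\mathop{\rm Cap}\nolimits^s_\Omega(K_\varepsilon,u)=q(V_{\Omega,K_\varepsilon,u},V_{\Omega,K_\varepsilon,u})\leq q(\eta_n U,\eta_n U)$, providing a bound on $\|V_{\Omega,K_\varepsilon,u}\|^2_{\mathcal{D}^{1,2}(\R^{N+1}_+;t^{1-2s})}$ that is uniform in $\varepsilon$.

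With this bound in hand, I would mimic the extraction argument of Lemma \ref{lem:trace_o_capacity}: along a subsequence, $V_{\Omega,K_\varepsilon,u}\rightharpoonup V$ weakly in the closed subspace $\mathcal{D}^{1,2}_{\Omega^c}(\R^{N+1}_+;t^{1-2s})$. For every $\phi\in C^\infty_c(\R^{N+1}_+\cup(\Omega\setminus K))$, the concentration assumption implies $\phi\in\mathcal{D}^{1,2}_{\Omega^c\cup K_\varepsilon}(\R^{N+1}_+;t^{1-2s})$ for $\varepsilon$ small, so \eqref{eq:V_K_U_weak} gives $q(V_{\Omega,K_\varepsilon,u},\phi)=0$; passing to the weak limit yields $q(V,\phi)=0$. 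By density this extends to all $\phi\in\mathcal{D}^{1,2}_{\Omega^c\cup K}(\R^{N+1}_+;t^{1-2s})$, which by Proposition \ref{prop:cap0} coincides with the ambient space $\mathcal{D}^{1,2}_{\Omega^c}(\R^{N+1}_+;t^{1-2s})$; testing against $\phi=V$ itself gives $q(V,V)=0$, whence $V\equiv 0$. This identification of the weak limit is the main delicate point, and it is precisely where the hypothesis $\mathop{\rm Cap}\nolimits^s_\Omega(K)=0$ enters in an essential way.

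Finally, I would upgrade to strong convergence via the identity
\[
q(V_{\Omega,K_\varepsilon,u},V_{\Omega,K_\varepsilon,u})=q(V_{\Omega,K_\varepsilon,u},V_{\Omega,K_\varepsilon,u}-U)+q(V_{\Omega,K_\varepsilon,u},U)=q(V_{\Omega,K_\varepsilon,u},U),
\]
where the first term vanishes because $V_{\Omega,K_\varepsilon,u}-U\in\mathcal{D}^{1,2}_{\Omega^c\cup K_\varepsilon}(\R^{N+1}_+;t^{1-2s})$ can be used as a test function in \eqref{eq:V_K_U_weak}. Since $U$ is a fixed element of $\mathcal{D}^{1,2}(\R^{N+1}_+;t^{1-2s})$ and $V_{\Omega,K_\varepsilon,u}\rightharpoonup 0$ weakly, the bilinear form $q(\cdot,U)$ (which is a continuous linear functional) forces $q(V_{\Omega,K_\varepsilon,u},U)\to 0$, and hence $\|V_{\Omega,K_\varepsilon,u}\|^2_{\mathcal{D}^{1,2}(\R^{N+1}_+;t^{1-2s})}\to 0$, i.e. both strong convergence to zero in $\mathcal{D}^{1,2}(\R^{N+1}_+;t^{1-2s})$ and $\mathop{\rm Cap}\nolimits^s_\Omega(K_\varepsilon,u)\to 0$. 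A Urysohn subsequence argument promotes this to convergence along the entire family $\varepsilon\to 0^+$.
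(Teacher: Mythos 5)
Your proof is correct and follows essentially the same route as the paper: uniform bound, weak subsequential limit identified as zero via Proposition \ref{prop:cap0}, strong convergence by testing with $V_{\Omega,K_\varepsilon,u}-U$, and a Urysohn argument. The only (harmless) difference is that your uniform bound via the cut-offs $\eta_n U$ is more elaborate than necessary, since $W=U$ itself is admissible in \eqref{eq:16} (as $W-U=0$), giving directly $\|V_{\Omega,K_\varepsilon,u}\|^2_{\mathcal{D}^{1,2}(\R^{N+1}_+;t^{1-2s})}\leq \|U\|^2_{\mathcal{D}^{1,2}(\R^{N+1}_+;t^{1-2s})}$.
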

\begin{proof}
Let  $U\in  \mathcal{D}^{1,2}_{\Omega^c}(\R^{N+1}_+;t^{1-2s})$ be
such that $\mathop{\rm Tr}U=u$
and let $V_{\Omega,K_\varepsilon,u}\in \mathcal{D}^{1,2}_{\Omega^c}(\R^{N+1}_+;t^{1-2s})$ achieve $\mathop{\rm Cap}\nolimits^s_\Omega(K_\varepsilon,u)$. Then, by \eqref{eq:V_K_U_weak}, $V_{\Omega,K_\varepsilon,u} -U \in
\mathcal{D}^{1,2}_{\Omega^c\cup K_\varepsilon}(\R^{N+1}_+;t^{1-2s})$ and
\begin{equation}\label{eq:V_K_U_weak_eps}
q(V_{\Omega,K_\varepsilon,u},\phi )=0 \quad\text{for all } \phi\in \mathcal{D}^{1,2}_{\Omega^c\cup K_\varepsilon}(\R^{N+1}_+;t^{1-2s}).
\end{equation}
 As $V_{\Omega,K_\varepsilon,u}$ achieves \eqref{eq:16}, we have
\[
\|V_{\Omega,K_\varepsilon,u}\|^2_{\mathcal{D}^{1,2}(\R^{N+1}_+;t^{1-2s})}
\leq 
\|U\|^2_{\mathcal{D}^{1,2}(\R^{N+1}_+;t^{1-2s})},
\]
so that $\{V_{\Omega,K_\varepsilon,u}\}_{\varepsilon>0}$ is bounded in
$\mathcal{D}^{1,2}_{\Omega^c}(\R^{N+1}_+;t^{1-2s})$. Hence there exist
a sequence $\varepsilon_n\to0^+$ and
$V\in \mathcal{D}^{1,2}_{\Omega^c}(\R^{N+1}_+;t^{1-2s})$ such that
$V_{\Omega,K_{\varepsilon_n},u} \rightharpoonup V$ weakly in
$\mathcal{D}^{1,2}_{\Omega^c}(\R^{N+1}_+;t^{1-2s})$. Let us show that
$V=V_{\Omega,K,u}$. On the one hand,
$V-U \in \mathcal{D}^{1,2}_{\Omega^c\cup K}(\R^{N+1}_+;t^{1-2s})$
thanks to Proposition \ref{prop:cap0} and the assumption
$\mathop{\rm Cap}\nolimits^s_\Omega(K)=0$. On the other hand, passing
to the limit in \eqref{eq:V_K_U_weak_eps} we obtain that $q(V,\phi)=0$ for
every $\phi\in C^\infty_c(\R^{N+1}_+\cup (\Omega\setminus K))$ and so,
by density, for every
$\phi \in \mathcal{D}^{1,2}_{\Omega^c\cup
  K}(\R^{N+1}_+;t^{1-2s})=\mathcal{D}^{1,2}_{\Omega^c}(\R^{N+1}_+;t^{1-2s})$.
Therefore $V=V_{\Omega,K,u}\equiv 0$. In order to prove that the convergence
is strong, take $\phi=V_{\Omega,K_{\varepsilon_n},u}-U$ in
\eqref{eq:V_K_U_weak_eps} and pass to the limit to obtain
\[
\lim_{n\to+\infty}\mathop{\rm Cap}\nolimits^s_\Omega(K_{\varepsilon_n},u)
=\lim_{n\to+\infty} q(V_{\Omega,K_{\varepsilon_n},u},V_{\Omega,K_{\varepsilon_n},u})
=\lim_{n\to+\infty} q(V_{\Omega,K_{\varepsilon_n},u},U)=q(V,U)=0.
\]
We conclude that
$\mathop{\rm Cap}\nolimits^s_\Omega(K_{\varepsilon_n},u)\to 0$ and
that $V_{\Omega,K_{\varepsilon_n},u}\to0$ strongly in
$\mathcal{D}^{1,2}_{\Omega^c}(\R^{N+1}_+;t^{1-2s})$. Since these
limits do not depend on the sequence $\varepsilon_n\to0$, we reach the conclusion.
\end{proof}

Let us introduce the operator $A:
\mathcal{D}^{1,2}_{\Omega^c}(\R^{N+1}_+;t^{1-2s}) 
\to \mathcal{D}^{1,2}_{\Omega^c}(\R^{N+1}_+;t^{1-2s})$ defined by
\begin{equation}\label{eq:A_def}
q(A(U),V)=\int_\Omega \mathop{\rm Tr}U\mathop{\rm Tr V} \,dx
\end{equation}
for every $U,V\in
\mathcal{D}^{1,2}_{\Omega^c}(\R^{N+1}_+;t^{1-2s})$.
It is straightforward to see that $A$ is symmetric, nonnegative, and
compact.  Letting, for $j\in\N_*$,
\begin{equation}\label{eq:mu_def}
\mu_j=\frac{1}{\kappa_s \lambda_j^s(\Omega)},
\end{equation}
the spectrum of $A$ is $\{0\}\cup\{\mu_j:j\in\N_*\}$;
furthermore, since $\textrm{dim ker}A=+\infty$, $0$ has
  infinite multiplicity as an eigenvalue of $A$, whereas the non-zero
  eigenvalues of $A$ have finite multiplicity.

\begin{proof}[Proof of Theorem \ref{thm:expansion}]
Let $U_j=\mathcal H(u_j)$, so that $U_j$  satisfies
\eqref{eq:realization_s} and \eqref{eq:normalization}.
To simplify the notation, in the rest of the proof we write
$V_\varepsilon=V_{\Omega,K_\varepsilon,u_j}$
 and $\mathcal{H}_\varepsilon=\mathcal{D}^{1,2}_{\Omega^c\cup K_\varepsilon}(\R^{N+1}_+;t^{1-2s})$. 
We divide the proof into three steps.

\medskip

\noindent {\bf Step 1}. We claim that
\begin{equation}\label{eq:claim1_expansion}
\lambda_j^s(\Omega\setminus K_\varepsilon)-\lambda_j^s(\Omega)=
o\left(\sqrt{\mathop{\rm Cap}\nolimits^s_\Omega(K_\varepsilon,u_j)}\right)
\qquad\text{as }\varepsilon\to0^+.
\end{equation}
Let 
\begin{equation}\label{eq:psi_eps}
\psi_\varepsilon=U_j-V_\varepsilon \in \mathcal{H}_\varepsilon, 
\end{equation}
so that $\psi_\varepsilon$ is the orthogonal projection of $U_j$ on $\mathcal{H}_\varepsilon$ in the space $\mathcal{D}^{1,2}_{\Omega^c}(\R^{N+1}_+;t^{1-2s})$ endowed with the scalar product $q$, that is
\[
q(\psi_\varepsilon-U_j,\phi)=0 \qquad\text{for every }\phi \in \mathcal{H}_\varepsilon.
\]
For every $\phi \in \mathcal{H}_\varepsilon$ we have, using \eqref{eq:realization_s_w},
\begin{align*}
  q(\psi_\varepsilon,\phi)-\kappa_s\lambda_j^s(\Omega)
  \int_\Omega \mathop{\rm Tr}\psi_\varepsilon\mathop{\rm Tr}\phi \,dx
  &=q(U_j,\phi)-\kappa_s\lambda_j^s(\Omega)
    \int_\Omega \mathop{\rm Tr}\psi_\varepsilon\mathop{\rm Tr}\phi \,dx \\
  &=\kappa_s\lambda_j^s(\Omega)\int_\Omega \mathop{\rm Tr}V_\varepsilon\mathop{\rm Tr}\phi \,dx,
\end{align*}
so that
\begin{equation}\label{eq:Tr_psi_eps}
\int_\Omega \mathop{\rm Tr}\psi_\varepsilon\mathop{\rm Tr}\phi \,dx
=\frac{1}{\kappa_s\lambda_j^s(\Omega)} q(\psi_\varepsilon,\phi)-\int_\Omega \mathop{\rm Tr}V_\varepsilon\mathop{\rm Tr}\phi \,dx \qquad \text{for ever }\phi\in \mathcal{H}_\varepsilon.
\end{equation}
Let $A_\varepsilon:\mathcal{H}_\varepsilon\to\mathcal{H}_\varepsilon$ be defined by
\begin{equation}\label{eq:A_eps_def}
q(A_\varepsilon(U),V)=\int_\Omega \mathop{\rm Tr}U\mathop{\rm Tr V} \,dx
\qquad \text{for every }U,V\in \mathcal{H}_\varepsilon.
\end{equation}
Recalling the definition of $\mu_j$ in \eqref{eq:mu_def}, the spectral
theorem (see for instance \cite[Proposition 8.20]{HelfferBook}) provides
\begin{equation}\label{eq:spectral_cons}
\text{dist}(\mu_j,\sigma(A_\varepsilon)) \leq \frac{\|A_\varepsilon\psi_\varepsilon-\mu_j\psi_\varepsilon\|_{\mathcal{H}_\varepsilon}}{\|\psi_\varepsilon\|_{\mathcal{H}_\varepsilon}},
\end{equation}
where $\sigma(A_\varepsilon)$ is the spectrum of $A_\varepsilon$.

Taking into account  Lemma \ref{lem:cap_cont}, we have
  that  
\[
| q(U_j,V_\varepsilon)|\leq
  \sqrt{q(U_j,U_j)}\sqrt{q(V_\varepsilon,V_\varepsilon)}=
\sqrt{\lambda_j^s(\Omega)\kappa_s}\sqrt{\mathop{\rm Cap}\nolimits^s_\Omega(K_\varepsilon,u_j)}
=o(1)
\]
as $\varepsilon\to 0$, then the denominator in the right hand side of
\eqref{eq:spectral_cons} is easily estimated as follows
\begin{align}\label{eq:donom_spectral}
  \|\psi_\varepsilon\|^2_{\mathcal{H}_\varepsilon}
  &=q(U_j-V_\varepsilon,U_j-V_\varepsilon) =
  q(U_j,U_j)+\mathop{\rm Cap}\nolimits^s_\Omega(K_\varepsilon,u_j) -2
  q(U_j,V_\varepsilon)\\
&\notag=\lambda_j^s(\Omega)\kappa_s+o(1)\quad\text{as }
    \varepsilon\to0^+.
\end{align}
In order to estimate the numerator in the right hand side of
\eqref{eq:spectral_cons}, let
$Z_\varepsilon=A_\varepsilon\psi_\varepsilon-\mu_j\psi_\varepsilon
\in\mathcal{H}_\varepsilon $. Using
\eqref{eq:A_eps_def} and \eqref{eq:Tr_psi_eps}, we have
\[
q(Z_\varepsilon,\phi)+\mu_j q(\psi_\varepsilon,\phi)=q(A_\varepsilon\psi_\varepsilon,\phi)
=\int_\Omega \mathop{\rm Tr}\psi_\varepsilon \mathop{\rm Tr}\phi \,dx
=\mu_j q(\psi_\varepsilon,\phi)-\int_\Omega \mathop{\rm Tr}V_\varepsilon \mathop{\rm Tr}\phi \,dx,
\]
for every $\phi\in \mathcal{H}_\varepsilon$. Choosing
$\phi=Z_\varepsilon \in\mathcal{H}_\varepsilon$ in the previous
expression and using Theorem \ref{thm:immersions} (i) and
\eqref{eq:4}, we obtain
\begin{align}\label{eq:num_spectral}
\|Z_\varepsilon\|^2_{\mathcal{H}_\varepsilon} 
&=-\int_\Omega \mathop{\rm Tr}V_\varepsilon \mathop{\rm Tr}Z_\varepsilon \,dx\leq  
\|\mathop{\rm Tr}
V_\varepsilon\|_{L^2(\Omega)} |\Omega|^{\frac sN}\left(
\int_\Omega  |\mathop{\rm
  Tr}Z_\varepsilon|^{2^*(s)}\right)^{\!\!\frac1{2^*(s)}}\\
&\notag \leq \|\mathop{\rm Tr}
V_\varepsilon\|_{L^2(\Omega)}|\Omega|^{\frac sN}S_{N,s}^{-1}\|\mathop{\rm
  Tr}Z_\varepsilon\|_{\mathcal D^{s,2}(\R^N)}
\leq \|\mathop{\rm Tr}
V_\varepsilon\|_{L^2(\Omega)}|\Omega|^{\frac sN}S_{N,s}^{-1}\kappa_s^{-1/2}\|Z_\varepsilon\|_{\mathcal H_\varepsilon}.
\end{align}
Replacing \eqref{eq:donom_spectral} and \eqref{eq:num_spectral} into \eqref{eq:spectral_cons}, we find that there exists a constant $C$ independent of $\varepsilon$ such that 
\begin{equation}\label{eq:spectral_cons2}
\text{dist}(\mu_j,\sigma(A_\varepsilon)) \leq C 
\|\mathop{\rm Tr} V_\varepsilon\|_{L^2(\Omega)}.
\end{equation}
Now, the assumption that $\lambda_j^s(\Omega)$ is simple and the continuity proved in Theorem \ref{thm:continuity} imply that
\[
\lambda_{j,\varepsilon}:=\lambda_j^s(\Omega\setminus K_\varepsilon)
\qquad \text{is simple for }\varepsilon>0\text{ small enough}.
\]
Denoting as
\begin{equation}\label{eq:mu_eps}
\mu_{j,\varepsilon}=1/(\kappa_s \lambda_{j,\varepsilon})
\end{equation}
the $j$-th eigenvalue of $A_\eps$, by the simplicity of
  $\mu_j$ as an eigenvalue of the operator $A$ introduced in \eqref{eq:A_def}, and by Theorem \ref{thm:continuity} we
  have that 
\[
\text{dist}(\mu_j,\sigma(A_\varepsilon))=|\mu_j-\mu_{j,\varepsilon}|
\quad \text{for }\varepsilon>0\text{ small enough}.
\]
Then relation \eqref{eq:spectral_cons2} provides, for
$\eps$ small enough,
\[
|\lambda_j^s(\Omega)-\lambda_{j,\varepsilon}|
=\kappa_s\lambda_j^s(\Omega)\lambda_{j,\varepsilon} |\mu_j-\mu_{j,\varepsilon}|
\leq C \kappa_s\lambda_j^s(\Omega)\lambda_{j,\varepsilon} \|\mathop{\rm Tr} V_\varepsilon\|_{L^2(\Omega)}.
\]
As $C$ is independent of $\varepsilon$ and
  $\lim_{\eps\to0^+}\lambda_{j,\varepsilon}=\lambda_j^s(\Omega)$,
Lemma \ref{lem:trace_o_capacity} provides the claim.

\medskip

\noindent {\bf Step 2}. We claim that
\begin{equation}\label{eq:claim2_expansion}
\|\psi_\varepsilon-\Pi_\varepsilon\psi_\varepsilon\|_{\mathcal{H}_\varepsilon} 
=o\left(\sqrt{\mathop{\rm Cap}\nolimits^s_\Omega(K_\varepsilon,u_j)}\right)
\qquad\text{as }\varepsilon\to0^+,
\end{equation}
where $\Pi_\varepsilon : \mathcal{D}^{1,2}_{\Omega^c}(\R^{N+1}_+;t^{1-2s})\to \mathcal{H}_\varepsilon$ is defined as
\[
\Pi_\varepsilon W=\left(\int_\Omega \mathop{\rm Tr}W\mathop{\rm Tr}
  U_{j,\varepsilon}\,dx\right)
\, U_{j,\varepsilon}
\quad \text{for any }W\in \mathcal{D}^{1,2}_{\Omega^c}(\R^{N+1}_+;t^{1-2s})\]
and $U_{j,\varepsilon}$ is a normalized eigenfunction associated to
$\lambda_{j,\varepsilon}$, i.e. 
\begin{equation}\label{eq:18}
\begin{cases}
U_{j,\eps}\in\mathcal H_\eps,\\
{\displaystyle{ q(U_{j,\eps},\phi)
=\lambda_{j,\eps}\kappa_s\int_\Omega \mathop{\rm
  Tr}U_{j,\eps}\mathop{\rm Tr}\phi\,dx}}\quad\text{for every
}\phi\in\mathcal H_\eps,\\
\int_{\Omega}|\mathop{\rm Tr}U_{j,\eps}(x)|^2\,dx =1.
\end{cases}
\end{equation}
Let $\tilde{U}_\varepsilon=\psi_\varepsilon-\Pi_\varepsilon\psi_\varepsilon$ and notice that
\begin{equation}\label{eq:19}
\int_\Omega \mathop{\rm Tr}\tilde{U}_\varepsilon \mathop{\rm Tr} U_{j,\varepsilon} \,dx=0.
\end{equation}
Using the fact that $\Pi_\varepsilon\psi_\varepsilon$ is an eigenfunction associated to $\lambda_{j,\varepsilon}$ and relation \eqref{eq:Tr_psi_eps}, we see that the following holds for every $\phi\in \mathcal{H}_\varepsilon$
\begin{align}\label{eq:q_U_tilde}
  &q(\tilde{U}_\varepsilon,\phi)-\kappa_s\lambda_{j,\varepsilon}
    \int_\Omega \mathop{\rm Tr}\tilde{U}_\varepsilon \mathop{\rm Tr}\phi\,dx \\
  \notag &\quad=
           q(\psi_\varepsilon,\phi)-\kappa_s\lambda_{j,\varepsilon}
           \int_\Omega \mathop{\rm Tr}\psi_\varepsilon \mathop{\rm Tr}\phi\,dx
           -\left[q(\Pi_\varepsilon\psi_\varepsilon,\phi)-
           \kappa_s\lambda_{j,\varepsilon} 
           \int_\Omega \mathop{\rm Tr}(\Pi_\varepsilon\psi_\varepsilon)\mathop{\rm Tr}\phi\,dx \right]\\
  \notag&\quad=q(\psi_\varepsilon,\phi)-\kappa_s\lambda_j^s(\Omega)
          \int_\Omega \mathop{\rm Tr}\psi_\varepsilon \mathop{\rm Tr}\phi\,dx 
          +\kappa_s(\lambda_j^s(\Omega)-\lambda_{j,\varepsilon}) 
          \int_\Omega \mathop{\rm Tr}\psi_\varepsilon \mathop{\rm Tr}\phi\,dx \\
  \notag&\quad=\kappa_s\lambda_j^s(\Omega)
          \int_\Omega \mathop{\rm Tr}V_\varepsilon \mathop{\rm Tr}\phi\,dx 
          +\kappa_s(\lambda_j^s(\Omega)-
          \lambda_{j,\varepsilon}) \int_\Omega \mathop{\rm Tr}\psi_\varepsilon \mathop{\rm Tr}\phi\,dx .
\end{align}
Let $\xi_\varepsilon=A_\varepsilon(\tilde{U}_\varepsilon)-\mu_{j,\varepsilon}\tilde{U}_\varepsilon \in \mathcal{H}_\varepsilon$. 
We use the definition of $A_\varepsilon$ in \eqref{eq:A_eps_def},  
that of $\mu_{j,\varepsilon}$ in \eqref{eq:mu_eps} and
relation \eqref{eq:q_U_tilde} evaluated at $\phi=\xi_\varepsilon$
to compute
\begin{multline*}
  \|\xi_\varepsilon\|^2_{\mathcal{H}_\varepsilon} = q(A_\varepsilon
  (\tilde{U}_\varepsilon),\xi_\varepsilon)-\mu_{j,\varepsilon}
  q(\tilde{U}_\varepsilon,\xi_\varepsilon)
  =\int_\Omega \mathop{\rm Tr}\tilde{U}_\varepsilon \mathop{\rm Tr}\xi_\varepsilon \,dx \\
  -\left[ \int_\Omega \mathop{\rm Tr}\tilde{U}_\varepsilon \mathop{\rm
      Tr}\xi_\varepsilon \,dx
    +\frac{\lambda_j^s(\Omega)}{\lambda_{j,\varepsilon}}\int_\Omega
    \mathop{\rm Tr}V_\varepsilon \mathop{\rm Tr}\xi_\varepsilon \,dx
    +\frac{\lambda_j^s(\Omega)-\lambda_{j,\varepsilon}}{\lambda_{j,\varepsilon}}
 \int_\Omega \mathop{\rm Tr}\psi_\varepsilon \mathop{\rm Tr}\xi_\varepsilon \,dx \right]\\
  =-\frac{\lambda_j^s(\Omega)}{\lambda_{j,\varepsilon}}\int_\Omega
  \mathop{\rm Tr}V_\varepsilon \mathop{\rm Tr}\xi_\varepsilon \,dx
  -\frac{\lambda_j^s(\Omega)-\lambda_{j,\varepsilon}}{\lambda_{j,\varepsilon}}
  \int_\Omega \mathop{\rm Tr}\psi_\varepsilon \mathop{\rm
    Tr}\xi_\varepsilon \,dx ,
\end{multline*}
from which, taking into account \eqref{eq:donom_spectral} and \eqref{eq:hardy_bdd},
we deduce that
\[
\|\xi_\varepsilon\|_{\mathcal{H}_\varepsilon} \leq C\left( \|\mathop{\rm Tr} V_\varepsilon\|_{L^2(\Omega)}+|\lambda_j^s(\Omega)-\lambda_{j,\varepsilon}|\right),
\]
for a constant $C$ not depending on $\varepsilon$. Lemma \ref{lem:trace_o_capacity} and relation \eqref{eq:claim1_expansion} provide then
\begin{equation}\label{eq:tildeU}
\|A_\varepsilon(\tilde{U}_\varepsilon)-\mu_{j,\varepsilon}
\tilde{U}_\varepsilon\|_{\mathcal{H}_\varepsilon}=
\|\xi_\varepsilon\|_{\mathcal{H}_\varepsilon}=
o\left(\sqrt{\mathop{\rm Cap}\nolimits^s_\Omega(K_\varepsilon,u_j)}\right)
\qquad\text{as }\varepsilon\to0^+.
\end{equation}
Let
\[
\mathcal{K}_\varepsilon=\mathop{\rm Ker} \Pi_\varepsilon \vert_{\mathcal{H}_\varepsilon}
=\left\{W\in \mathcal{H}_\varepsilon : \, \int_\Omega \mathop{\rm Tr}W \mathop{\rm Tr} U_{j,\varepsilon}\,dx=0\right\}
\]
and note that $\tilde U_\eps\in \mathcal K_\eps$ thanks
  to \eqref{eq:19}.  Moreover, in view of \eqref{eq:A_eps_def} and \eqref{eq:18},
  $A_\eps(U)\in \mathcal{K}_\varepsilon$ for all
  $U\in \mathcal{K}_\varepsilon$,  hence, denoting as $\tilde{A}_\varepsilon$ 
 the restriction of $A_\varepsilon$ to $\mathcal{K}_\varepsilon$, we
have 
$\tilde{A}_\varepsilon :\mathcal{K}_\varepsilon\to
\mathcal{K}_\varepsilon$. As
$\sigma(\tilde{A}_\varepsilon)=\sigma(A_\varepsilon)\setminus\{\mu_{j,\varepsilon}\}$,
there exists $\delta>0$ independent of $\varepsilon$ such that
$\text{dist}(\mu_{j,\varepsilon},\sigma(\tilde{A}_\varepsilon))\geq
\delta$.
We use this inequality, the spectral theorem, and relation
\eqref{eq:tildeU} to obtain
\begin{align*}
\|\psi_\varepsilon-\Pi_\varepsilon\psi_\varepsilon\|_{\mathcal{H}_\varepsilon} =
\|\tilde{U}_\varepsilon\|_{\mathcal{H}_\varepsilon} &\leq 
\frac{1}{\delta} \text{dist}(\mu_{j,\varepsilon},\sigma(\tilde{A}_\varepsilon)) \|\tilde{U}_\varepsilon\|_{\mathcal{H}_\varepsilon}\\
&\leq \frac{1}{\delta} \| \tilde{A}_\varepsilon(\tilde{U}_\varepsilon)-\mu_{j,\varepsilon}\tilde{U}_\varepsilon\|_{\mathcal{H}_\varepsilon} 
=o\left(\sqrt{\mathop{\rm Cap}\nolimits^s_\Omega(K_\varepsilon,u_j)}\right)
\end{align*}
as $\eps\to0^+$, thus proving \eqref{eq:claim2_expansion}.

\medskip

\noindent {\bf Step 3}. From the definition of $\psi_\varepsilon$ \eqref{eq:psi_eps},
\eqref{eq:normalization}, Lemma \ref{lem:trace_o_capacity},
\eqref{eq:claim2_expansion} and \eqref{eq:hardy_bdd}, we have
\begin{align}\label{eq:tracePipsi}
\|\mathop{\rm Tr}(\Pi_\varepsilon\psi_\varepsilon)\|_{L^2(\Omega)}
&=\left(\int_\Omega \left|\mathop{\rm
  Tr}(\Pi_\varepsilon\psi_\varepsilon-\psi_\varepsilon)+u_j-
\mathop{\rm Tr}V_\eps\right|^2\,dx
\right)^{\!1/2}\\
\notag&=\left(1+ o\left(\sqrt{\mathop{\rm Cap}\nolimits^s_\Omega(K_\varepsilon,u_j)}\right)\right)^{\!1/2}\\
\notag&=1+o\left(\sqrt{\mathop{\rm Cap}\nolimits^s_\Omega(K_\varepsilon,u_j)}\right)
\qquad\text{as }\varepsilon\to0^+.
\end{align}
Let
\[
\Psi_\varepsilon=\frac{\Pi_\varepsilon\psi_\varepsilon}{\|\mathop{\rm Tr}(\Pi_\varepsilon\psi_\varepsilon)\|_{L^2(\Omega)}} \in \mathcal{H}_\varepsilon.
\]
Noticing that
\[
\Psi_\varepsilon-\psi_\varepsilon=
\frac{\Pi_\varepsilon\psi_\varepsilon-{\psi_\eps}+(1-\|\mathop{\rm Tr}(\Pi_\varepsilon\psi_\varepsilon)\|_{L^2(\Omega)})\psi_\varepsilon}{\|\mathop{\rm Tr}(\Pi_\varepsilon\psi_\varepsilon)\|_{L^2(\Omega)}}
\]
and using \eqref{eq:tracePipsi}, \eqref{eq:claim2_expansion} and \eqref{eq:hardy_bdd}, we deduce that
\begin{equation}\label{eq:trace_est1}
\|\mathop{\rm Tr}(\Psi_\varepsilon-\psi_\varepsilon)\|_{L^2(\Omega)}
=o\left(\sqrt{\mathop{\rm Cap}\nolimits^s_\Omega(K_\varepsilon,u_j)}\right)
\qquad\text{as }\varepsilon\to0^+.
\end{equation}
Similarly,
\begin{equation}\label{eq:trace_est2}
\|\mathop{\rm Tr}(\Psi_\varepsilon-U_j)\|_{L^2(\Omega)}
=\|\mathop{\rm Tr}(\Psi_\varepsilon-\psi_\varepsilon-V_\varepsilon)\|_{L^2(\Omega)}
=o\left(\sqrt{\mathop{\rm Cap}\nolimits^s_\Omega(K_\varepsilon,u_j)}\right)
\qquad\text{as }\varepsilon\to0^+.
\end{equation}
We also remark, using the equation satisfied by $V_\varepsilon$ (see \eqref{eq:V_K_U_weak}), the fact that $\psi_\varepsilon\in\mathcal{H}_\varepsilon$ and the equation satisfied by $U_j$, that
\begin{equation}\label{eq:cap_equiv}
  \mathop{\rm Cap}\nolimits^s_\Omega(K_\varepsilon,u_j)=
{\frac1{\kappa_s}}q(V_\varepsilon,V_\varepsilon)
  ={\frac1{\kappa_s}}q(V_\varepsilon,U_j-\psi_\varepsilon)=
{\frac1{\kappa_s}}q(V_\varepsilon,U_j)
  =\lambda_j^s(\Omega)\int_\Omega u_j \mathop{\rm Tr}V_\varepsilon \,dx.
\end{equation}
Noticing that $\Psi_\varepsilon$ is an eigenfunction associated to $\lambda_{j,\varepsilon}$, relation \eqref{eq:Tr_psi_eps} with $\phi=\Psi_\varepsilon$ provides
\[
(\lambda_{j,\varepsilon}-\lambda_j^s(\Omega))\int_\Omega \mathop{\rm Tr}\Psi_\varepsilon\mathop{\rm Tr}\psi_\varepsilon \,dx =
\lambda_j^s(\Omega)\int_\Omega \mathop{\rm Tr}\Psi_\varepsilon\mathop{\rm Tr}V_\varepsilon \,dx.
\]
Therefore, by \eqref{eq:trace_est2}, \eqref{eq:cap_equiv} and Lemma
\ref{lem:trace_o_capacity}, we have
\begin{align*}
(\lambda_{j,\varepsilon}-\lambda_j^s(\Omega))\int_\Omega \mathop{\rm Tr}\Psi_\varepsilon\mathop{\rm Tr}\psi_\varepsilon \,dx 
&=\lambda_j^s(\Omega) \int_\Omega u_j\mathop{\rm
  Tr}V_\varepsilon\,dx+
\lambda_j^s(\Omega) \int_\Omega \mathop{\rm Tr}(\Psi_\varepsilon- U_j) \mathop{\rm Tr}V_\varepsilon\,dx\\
&=\mathop{\rm Cap}\nolimits^s_\Omega(K_\varepsilon,u_j)
+o(\mathop{\rm Cap}\nolimits^s_\Omega(K_\varepsilon,u_j))
\end{align*}
as $\varepsilon\to0^+$. As, by \eqref{eq:trace_est1},
\[
\int_\Omega \mathop{\rm Tr}\Psi_\varepsilon\mathop{\rm Tr}\psi_\varepsilon \,dx
=\int_\Omega |\mathop{\rm Tr}\Psi_\varepsilon|^2 \,dx
+\int_\Omega \mathop{\rm Tr}\Psi_\varepsilon\mathop{\rm Tr}(\psi_\varepsilon-\Psi_\varepsilon) \,dx
=1+o\left(\sqrt{\mathop{\rm Cap}\nolimits^s_\Omega(K_\varepsilon,u_j)}\right),
\]
we have concluded the proof.
\end{proof}

\section{Asymptotics of capacities for scaling of a given set}\label{sec:scaling}

In this section we will assume that $0\in\Omega$.
In order to prove Theorem \ref{thm:cap_K_eps}, we first establish the following preliminary result.

\begin{lemma}\label{lem:cap_f_n}
  Let $K\subset\Omega$ be compact and 
 $\Omega'$ be an open set such that
  $K\subset\Omega'\Subset\Omega$. Let $f\in H^s_{\rm loc}(\Omega)$ and 
  $(f_n)_{n\geq1}\subset H^s_{\rm loc}(\Omega)$ be such that $f_n\to f$
  as $n\to+\infty$ in $H^s(\Omega')$. Then
\[
 V_{\Omega,K,f_n}\to V_{\Omega,K,f}\quad\text{in }\mathcal{D}^{1,2}(\R^{N+1};t^{1-2s})
\]
and
\[
\lim_{n\to+\infty} \mathop{\rm Cap}\nolimits^s_\Omega(K,f_n)=
\mathop{\rm Cap}\nolimits^s_\Omega(K,f).
\]
\end{lemma}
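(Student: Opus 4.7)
The plan is to combine the weak characterization \eqref{eq:V_K_U_weak} of the capacitary potentials with an elementary orthogonality identity. Since $K\subset\Omega'\Subset\Omega$, fix once and for all a cutoff $\zeta_K\in C^\infty_c(\Omega')$ with $\zeta_K\equiv 1$ in a neighborhood of $K$, and set $U_n:=\mathcal H(\zeta_K f_n)$, $U:=\mathcal H(\zeta_K f)$, $V_n:=V_{\Omega,K,f_n}$, and $V:=V_{\Omega,K,f}$. By the very definition of the fractional $u$-capacity, both $V_n-U_n$ and $V-U$ lie in $\mathcal{D}^{1,2}_{\Omega^c\cup K}(\R^{N+1}_+;t^{1-2s})$, so the same holds for $\phi:=(V_n-V)-(U_n-U)$; hence $\phi$ is admissible as a test function in \eqref{eq:V_K_U_weak} written for both $V_n$ and $V$. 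Subtracting the resulting two identities yields $q(V_n-V,V_n-V)=q(V_n-V,U_n-U)$, and Cauchy--Schwarz together with the isometry \eqref{eq:4} gives
\[
\|V_n-V\|_{\mathcal{D}^{1,2}(\R^{N+1}_+;t^{1-2s})}\le \|U_n-U\|_{\mathcal{D}^{1,2}(\R^{N+1}_+;t^{1-2s})}=\sqrt{\kappa_s}\,\|\zeta_K(f_n-f)\|_{\mathcal D^{s,2}(\R^N)}.
\]

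Convergence of $V_n$ to $V$ in $\mathcal{D}^{1,2}(\R^{N+1}_+;t^{1-2s})$ is therefore reduced to showing that $\|\zeta_K(f_n-f)\|_{\mathcal D^{s,2}(\R^N)}\to 0$, after which the convergence of the capacities follows automatically from the identity $\mathop{\rm Cap}\nolimits^s_\Omega(K,g)=\kappa_s^{-1}\|V_{\Omega,K,g}\|^2_{\mathcal{D}^{1,2}(\R^{N+1}_+;t^{1-2s})}$ of \eqref{eq:V_K_U_def} and continuity of the norm.

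The remaining point is a standard fractional multiplier estimate: for every $g\in H^s(\Omega')$,
\[
\|\zeta_K g\|_{H^s(\R^N)}\le C\,\|g\|_{H^s(\Omega')},
\]
with $C$ depending only on $\zeta_K$, $\Omega'$, $N$, $s$. I would prove it by splitting the Gagliardo double integral for $\zeta_K g$ (extended by zero outside $\Omega'$) over the regions $\Omega'\times\Omega'$, the two mixed products, and $(\R^N\setminus\Omega')\times(\R^N\setminus\Omega')$ (the last contributing zero), and inserting the pointwise decomposition
\[
\zeta_K(x)g(x)-\zeta_K(y)g(y)=\zeta_K(x)\bigl(g(x)-g(y)\bigr)+g(y)\bigl(\zeta_K(x)-\zeta_K(y)\bigr).
\]
On $\Omega'\times\Omega'$ the first piece is controlled by $\|\zeta_K\|_\infty^2[g]_{H^s(\Omega')}^2$ and the second by an $L^2(\Omega')$-integral via the Lipschitz bound on $\zeta_K$ (the kernel $|x-y|^{2-N-2s}$ being locally integrable since $s<1$); on the mixed regions one exploits $\mathop{\rm supp}\zeta_K\Subset\Omega'$ to get a uniform positive lower bound on $|x-y|$, bounding the contribution by $\|g\|_{L^2(\Omega')}^2$. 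Applying this with $g=f_n-f\to 0$ in $H^s(\Omega')$, and recalling that $\|\cdot\|_{\mathcal D^{s,2}(\R^N)}$ is a fixed multiple of the Gagliardo seminorm, finishes the proof. The main (mild) obstacle is precisely this unpacking of the multiplier bound; the structural core of the argument is the orthogonality identity coming from \eqref{eq:V_K_U_weak}.
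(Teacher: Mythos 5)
Your argument is correct and is essentially the paper's proof: both rest on the orthogonality relation $q(V_n-V,\phi)=0$ for $\phi\in\mathcal{D}^{1,2}_{\Omega^c\cup K}(\R^{N+1}_+;t^{1-2s})$ applied with $\phi=(V_n-V)-(U_n-U)$, followed by Cauchy--Schwarz and the isometry \eqref{eq:4} to reduce everything to $\zeta_K f_n\to\zeta_K f$ in $\mathcal D^{s,2}(\R^N)$. The only difference is that you spell out the multiplier estimate $\|\zeta_K g\|_{H^s(\R^N)}\le C\|g\|_{H^s(\Omega')}$ (correctly), which the paper treats as immediate.
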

\begin{proof}
Let $\tilde\eta_K\in C^\infty(\R^{N+1}_+\cup\Omega')$
  be such that $\tilde\eta_K\equiv 1$ in a neighborhood of $K$.
Therefore $\tilde\eta_K f_n\to \tilde\eta_K f$ in $\mathcal
D^{s,2}(\Omega')$ and, consequently, $\mathcal H(\tilde\eta_K f_n)\to \mathcal
H(\tilde\eta_K f)$ in
$\mathcal{D}^{1,2}(\R^{N+1};t^{1-2s})$, where $\mathcal{H}$ is the extension operator introduced in \eqref{eq:13}. 

Furthermore both 
$V_{\Omega,K,f_n}-\mathcal H(\tilde\eta_K f_n)$
 and $V_{\Omega,K,f}-\mathcal H(\tilde\eta_K f)$ belong to $\mathcal{D}^{1,2}_{\Omega^c\cup K}(\R^{N+1};t^{1-2s})$. Hence
\[
q(V_{\Omega,K,f_n}-V_{\Omega,K,f},V_{\Omega,K,f_n}-\mathcal H(\tilde\eta_K f_n))=
q(V_{\Omega,K,f_n}-V_{\Omega,K,f},V_{\Omega,K,f}-\mathcal H(\tilde\eta_K f))=0,
\]
so that, using the H\"older inequality,
\begin{multline*}
\| V_{\Omega,K,f_n}-V_{\Omega,K,f}\|^2_{\mathcal{D}^{1,2}(\R^{N+1};t^{1-2s})}
=q(V_{\Omega,K,f_n}-V_{\Omega,K,f}, \mathcal H(\tilde\eta_K f_n)-\mathcal H(\tilde\eta_K f))\\
\leq \| V_{\Omega,K,f_n}-V_{\Omega,K,f}\|_{\mathcal{D}^{1,2}(\R^{N+1};t^{1-2s})}
\|\mathcal H(\tilde\eta_K f_n)-\mathcal H(\tilde\eta_K f)\|_{\mathcal{D}^{1,2}(\R^{N+1};t^{1-2s})}.
\end{multline*}
Then 
\[
\lim_{n\to+\infty} \|V_{\Omega,K,f_n}-V_{\Omega,K,f}\|_{\mathcal{D}^{1,2}(\R^{N+1};t^{1-2s})}=0,
\]
concluding the proof.
\end{proof}

\begin{proof}[Proof of Theorem \ref{thm:cap_K_eps}]
For every $\varepsilon>0$, let $V_{\Omega,K_\varepsilon,u_j}$ be the function that achieves $\mathop{\rm Cap}\nolimits^s_\Omega(K_\varepsilon,u_j)$ as in \eqref{eq:V_K_U_def} and let
\[
\tilde V_\varepsilon(z)=\varepsilon^{-\gamma_s}
V_{\Omega,K_\varepsilon,u_j} (\varepsilon z), \quad
 z\in\R^{N+1}_+.
\]
Let $U_j=\mathcal H(u_j) \in
 \mathcal{D}^{1,2}_{\Omega^c}(\R^{N+1}_+;t^{1-2s})$ be the extension
 of $u_j$ as in \eqref{eq:13} and define $\tilde
 U_\varepsilon(z):=\varepsilon^{-\gamma_s} U_j(\varepsilon z)$ as in
 Section \ref{sec:local-asympt-behav}.

We notice that $\tilde V_\varepsilon \in \mathcal{D}^{1,2}_{(\Omega/\varepsilon)^c}(\R^{N+1}_+;t^{1-2s})$, $\tilde V_\varepsilon-\tilde U_\varepsilon \in \mathcal{D}^{1,2}_{(\Omega/\varepsilon)^c\cup K}(\R^{N+1}_+;t^{1-2s})$ and
\begin{equation}\label{eq:tilde_V}
q(\tilde V_\varepsilon,\phi)=0 \quad\text{for all } \phi \in 
\mathcal{D}^{1,2}_{(\Omega/\varepsilon)^c\cup K}(\R^{N+1}_+;t^{1-2s}).
\end{equation}
In particular,
\begin{equation}\label{eq:cap_K_eps1}
\|\tilde V_\varepsilon \|^2_{\mathcal{D}^{1,2}(\R^{N+1};t^{1-2s})}
=\kappa_s \mathop{\rm Cap}\nolimits^s_{\Omega/\varepsilon}(K,\tilde u_\varepsilon),
\end{equation}
where $\tilde u_\varepsilon=\mathop{\rm Tr} \tilde U_\varepsilon$.

Let $r_0>0$ be such that
  $K\subset B_{r_0}'=\{x\in\R^N: |x|<r_0\}$.  For $\varepsilon$
sufficiently small, we have that
\[
B_{r_0}' \subset \frac{\Omega}{\varepsilon},
\]
so that $\mathcal{D}^{1,2}_{(B_{r_0}')^c\cup K} \subseteq \mathcal{D}^{1,2}_{(\Omega/\varepsilon)^c\cup K}$ and, in turn, 
\begin{equation}\label{eq:cap_K_eps2}
\mathop{\rm Cap}\nolimits^s_{\Omega/\varepsilon}(K,\tilde u_\varepsilon)
\leq \mathop{\rm Cap}\nolimits^s_{B_{r_0}'}(K,\tilde u_\varepsilon)
\to \mathop{\rm Cap}\nolimits^s_{B_{r_0}'}(K,\hat \psi)
\end{equation}
as $\varepsilon\to 0^+$, where in the last step we used \eqref{eq:17}
and Lemma \ref{lem:cap_f_n}. Combining \eqref{eq:cap_K_eps1} and
\eqref{eq:cap_K_eps2}, we deduce that the family
$\{\tilde V_\varepsilon\}_{\varepsilon>0}$ is bounded in the reflexive
space $\mathcal{D}^{1,2}(\R^{N+1}_+;t^{1-2s})$. Then there exist a sequence
$\varepsilon_n\to0^+$ and
$\tilde V\in \mathcal{D}^{1,2}(\R^{N+1}_+;t^{1-2s})$ such that
\begin{equation}\label{eq:weak_conv}
\tilde V_{\varepsilon_n}\rightharpoonup \tilde V \quad\text{weakly in }\mathcal{D}^{1,2}(\R^{N+1}_+;t^{1-2s})
\end{equation}
as $n\to+\infty$.

Let $\tilde\eta_K\in C^\infty_c(B_R^+)$ for some $R>0$, be such that
$\tilde\eta_k=1$ on a neighborhood of $K$. 
Then
$\tilde V_{\varepsilon_n}-\tilde\eta_K\tilde U_{\varepsilon_n} \in
\mathcal{D}^{1,2}_K(\R^{N+1}_+;t^{1-2s})
=\{U\in \mathcal{D}^{1,2}(\R^{N+1}_+;t^{1-2s}): \, 
\mathop{\rm Tr}U\in \mathcal D^{s,2}(\R^N\setminus K)\}$.
Moreover, by \eqref{eq:12} we have
that 
\begin{equation}\label{eq:20}
\tilde\eta_K\tilde U_\varepsilon\to 
\tilde\eta_K\tilde\psi \quad\text{in }\mathcal{D}^{1,2}(\R^{N+1}_+;t^{1-2s}).
\end{equation}
Since $\mathcal{D}^{1,2}_K(\R^{N+1}_+;t^{1-2s})$ is  closed in $\mathcal{D}^{1,2}(\R^{N+1}_+;t^{1-2s})$ (in the strong topology
and then, being a subspace, in the weak topology), 
 by \eqref{eq:weak_conv} we conclude that
$\tilde V-\tilde\eta_k\tilde\psi \in
\mathcal{D}^{1,2}_K(\R^{N+1}_+;t^{1-2s})$.
Moreover, relations \eqref{eq:tilde_V} and \eqref{eq:weak_conv}
provide
\[
q(\tilde V,\phi)=0 \quad \text{for all } \phi\in
C^\infty_c(\R^{N+1}_+\cup (\R^N\setminus K)),
\]
so that, by density,
\begin{equation*}
q(\tilde V,\phi)=0 \quad\text{for all } \phi \in \mathcal{D}^{1,2}_K(\R^{N+1}_+;t^{1-2s}).
\end{equation*}
In particular, 
\begin{equation}\label{eq:cap_K_eps3}
\|\tilde V\|^2_{\mathcal{D}^{1,2}(\R^{N+1}_+;t^{1-2s})}=
\kappa_s\mathop{\rm Cap}\nolimits^s_{\R^N}(K,\hat \psi)
=q(\tilde V,\tilde\eta_K\tilde\psi).
\end{equation}
Similarly, since $\tilde V_\varepsilon-\tilde\eta_K\tilde U_\varepsilon \in 
\mathcal{D}^{1,2}_{(\Omega/\varepsilon)^c\cup K}(\R^{N+1}_+;t^{1-2s})$
for $\varepsilon>0$ sufficiently small, using also relations \eqref{eq:tilde_V}, \eqref{eq:weak_conv}, \eqref{eq:20} and \eqref{eq:cap_K_eps3}, we obtain
\begin{equation}\label{eq:cap_K_eps4}
\|\tilde V_{\varepsilon_n}\|^2_{\mathcal{D}^{1,2}(\R^{N+1}_+;t^{1-2s})}
=q(\tilde V_{\varepsilon_n},\tilde\eta_K\tilde U_{\varepsilon_n})
\to q(\tilde V,\tilde \eta_K\tilde \psi)
=\kappa_s\mathop{\rm Cap}\nolimits^s_{\R^N}(K,\hat \psi),
\end{equation}
as $n\to+\infty$.
By the Urysohn's subsequence principle we conclude that
  the above convergence holds as $\eps\to0^+$ and not only along the
  sequence $\eps_n$.
To conclude the proof it suffices to notice that, by a change of variables,
\[
\mathop{\rm Cap}\nolimits^s_\Omega(K_\varepsilon,u_j)=
\frac1{\kappa_s}\|V_{\Omega,K_\varepsilon,
  u_j}\|^2_{\mathcal{D}^{1,2}(\R^{N+1}_+;t^{1-2s})}
=\frac1{\kappa_s}
\varepsilon^{N+2(\gamma_s-s)} \|\tilde V_\varepsilon\|^2_{\mathcal{D}^{1,2}(\R^{N+1}_+;t^{1-2s})}
\]
and to replace \eqref{eq:cap_K_eps4} into the previous expression.
\end{proof}

\begin{proof}[Proof of Theorem \ref{thm:main}]
The family of sets $\{\eps K\}_{\varepsilon>0}$ concentrates to the
compact set $\{0\}$, which satisfies $\mathop{\rm
Cap}\nolimits^s_\Omega(\{0\})=0$
by Example \ref{ex:singleton},  so that Theorem \ref{thm:expansion}
applies in our situation. By combining it with Theorem
\ref{thm:cap_K_eps}, we obtain the stated result.
\end{proof}

\begin{proof}[Proof of Corollary \ref{cor:sharp_asymp}]
Let $V_K$ be the function that achieves the infimum in
  \eqref{eq:16} with $u=\hat\psi$ and $\Omega=\R^N$, so that $\mathop{\rm Cap}\nolimits^s_{\R^N}(K,\hat\psi)=\frac1{\kappa_s}q(V_K,V_K)$. The Hardy-trace inequality \eqref{eq:half_space_hardy} provides
\begin{align*}
\mathop{\rm Cap}\nolimits^s_{\R^N}(K,\hat\psi)=\frac1{\kappa_s}q(V_K,V_K)
&\geq \Lambda_{N,s}\int_{\R^N} \frac{|\mathop{\rm Tr}V_K|^2}{|x|^{2s}}\,dx \\
&\geq \Lambda_{N,s}\int_{K} \frac{|\mathop{\rm Tr}V_K|^2}{|x|^{2s}}\,dx
= \Lambda_{N,s}\int_{K} |x|^{-2s} |\hat \psi(x)|^2 \,dx.
\end{align*}
If, by contradiction, $\mathop{\rm
    Cap}\nolimits^s_{\R^N}(K,\hat\psi)=0$ the above inequality would imply $\hat
  \psi=0$ a.e. in $K$. Since  the $N$-dimensional Lebesgue measure of
  $K$ is strictly positive
and $\hat\psi$ weakly solves $(-\Delta)^s\hat\psi=0$ in $\R^N$, the
Unique Continuation Principle from sets of positive measure proved in
\cite[Theorem 1.4]{FallFelli14}
would
imply that $\hat\psi\equiv 0$ in $\R^N$, giving rise to a
contradiction in view of Remark \ref{rem:hatpsinonzero}.
\end{proof}

\appendix 
\section{Boundedness of eigenfunctions}\label{sec:app_A}

To prove boundedness of eigenfunctions we need the following
Sobolev-trace inequality which follows from combination of Theorem
\ref{thm:immersions} (i) and continuity of the trace map
\eqref{eq:trace} (see also \cite[Theorem 2.1]{brandle2013concave}): there exists a positive
constant $\tau_{N,s}>0$ such that 
  \begin{equation}\label{eq:half_space_Sobolev}
   \tau_{N,s}  \|\mathop{\rm Tr}W\|_{L^{2^*\!(s)}(\R^N)}^2
        \leq \int_{\R^{N+1}_+}t^{1-2s}|\nabla
   W|^2\,dt\,dx,\quad\text{for all }W\in{\mathcal
     D}^{1,2}({\R^{N+1}_+};t^{1-2s}),
 \end{equation}
where $2^*\!(s)$ is defined in \eqref{eq:2*s}.
In the following lemma we prove that the extensions of eigenfunctions
of \eqref{eq:eigenvalue_main} are bounded in $\R^{N+1}_+$.
\begin{lemma}\label{l:bk}
Let $\Omega\subset\R^N$, $N\geq 1$, be a bounded open
set and $s\in (0,\min\{1,N/2\})$. 
Let $\alpha\in\R$ and $W\in \mathcal{D}^{1,2}_{\Omega^c}(\R^{N+1}_+;t^{1-2s})$ be a weak solution
to 
\begin{equation}\label{eq:7}
\begin{cases}
-\mathop{\rm div}(t^{1-2s} \nabla W)=0, &\text{in } \R^{N+1}_+, \\
\lim_{t\to0^+}\left(-t^{1-2s} \partial_t W \right)=\alpha W, &\text{in } \Omega\times\{0\}, \\
W=0, &\text{in } (\R^N\setminus \Omega)\times\{0\},
\end{cases}
\end{equation}
in the sense that 
\begin{equation}\label{eq:1}
\int_{\R^{N+1}_+} t^{1-2s}\nabla W\cdot\nabla\phi \,dx\,dt=\alpha\int_\Omega \mathop{\rm Tr}W\mathop{\rm Tr}\phi \,dx 
\end{equation}
for every $\phi \in\mathcal{D}^{1,2}_{\Omega^c}(\R^{N+1}_+;t^{1-2s})$.
Then $W\in L^\infty(\R^{N+1}_+)$ and 
$\mathop{\rm Tr}W\in L^\infty(\Omega)$.
\end{lemma}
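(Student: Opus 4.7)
The plan is to run a Moser-type iteration adapted to the degenerate elliptic setting with Muckenhoupt weight $t^{1-2s}$ and the mixed Dirichlet-Robin boundary conditions, first to establish $\mathop{\rm Tr} W \in L^\infty(\Omega)$, and then to pass from the trace bound to the bulk bound via the Caffarelli-Silvestre Poisson representation.

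For the Caccioppoli-type estimate, for every $L>0$ and $\beta \geq 0$ I would plug the admissible test function
\[
\phi := W \cdot \min(|W|,L)^{2\beta} \in \mathcal{D}^{1,2}_{\Omega^c}(\R^{N+1}_+;t^{1-2s})
\]
into \eqref{eq:1}. A chain-rule computation (splitting according to $\{|W|<L\}$ and $\{|W|\geq L\}$) shows that $\nabla W \cdot \nabla \phi \geq \min(|W|,L)^{2\beta}|\nabla W|^2$ pointwise, so that setting $v := \min(|W|,L)$ one has
\[
\min(|W|,L)^{2\beta} |\nabla W|^2 \geq v^{2\beta}|\nabla v|^2 = \frac{1}{(\beta+1)^2}\bigl|\nabla v^{\beta+1}\bigr|^2.
\]
Since $\mathop{\rm Tr}(W\cdot \phi) = (\mathop{\rm Tr} W)^2 \min(|\mathop{\rm Tr} W|,L)^{2\beta} \leq |\mathop{\rm Tr} W|^{2(\beta+1)}$, and $\mathop{\rm Tr} W$ is supported in $\Omega$, \eqref{eq:1} yields
\[
\int_{\R^{N+1}_+} t^{1-2s}\bigl|\nabla v^{\beta+1}\bigr|^2 \leq |\alpha|(\beta+1)^2 \int_\Omega |\mathop{\rm Tr} W|^{2(\beta+1)}.
\]

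Next I would apply the Sobolev-trace inequality \eqref{eq:half_space_Sobolev} to $v^{\beta+1} \in \mathcal{D}^{1,2}(\R^{N+1}_+;t^{1-2s})$ and let $L \to \infty$ via monotone convergence, obtaining
\[
\|\mathop{\rm Tr} W\|_{L^{2^*(s)(\beta+1)}(\Omega)} \leq \bigl[C(\beta+1)^2\bigr]^{\frac{1}{2(\beta+1)}} \|\mathop{\rm Tr} W\|_{L^{2(\beta+1)}(\Omega)}
\]
whenever the right-hand side is finite. Setting $\chi := 2^*(s)/2 > 1$ and $p_n := 2\chi^n$, this is the recursion $\|\mathop{\rm Tr} W\|_{L^{p_{n+1}}} \leq [C(p_n/2)^2]^{1/p_n}\|\mathop{\rm Tr} W\|_{L^{p_n}}$. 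Starting from $p_0=2$ (legitimate because $\mathop{\rm Tr} W \in L^{2^*(s)}(\R^N) \subset L^2(\Omega)$ by \eqref{eq:half_space_Sobolev} and boundedness of $\Omega$), I iterate: the product $\prod_k [C(p_k/2)^2]^{1/p_k}$ converges since $\sum_k 1/p_k$ and $\sum_k k/p_k$ both converge (geometric growth of $p_k$). Passing $n \to \infty$ gives $\|\mathop{\rm Tr} W\|_{L^\infty(\Omega)} \leq C' \|W\|_{\mathcal{D}^{1,2}(\R^{N+1}_+;t^{1-2s})}$.

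Finally, selecting test functions in \eqref{eq:1} with support away from $\R^N\times\{0\}$ shows that $W$ is $t^{1-2s}$-harmonic in $\R^{N+1}_+$. Since $W \in \mathcal{D}^{1,2}(\R^{N+1}_+;t^{1-2s})$ and its trace $u := \mathop{\rm Tr} W$ (extended by zero on $\R^N \setminus \Omega$) is now in $L^\infty(\R^N)\cap \mathcal{D}^{s,2}(\R^N)$, uniqueness in the Caffarelli-Silvestre extension theorem forces $W = \mathcal{H}(u)$, which admits the Poisson representation
\[
W(x,t) = c_{N,s}\int_{\R^N} \frac{t^{2s}}{(|x-y|^2+t^2)^{(N+2s)/2}}\,u(y)\,dy.
\]
The non-negative Poisson kernel integrates to $1$ in $y$ for every $(x,t)\in \R^{N+1}_+$, whence $\|W\|_{L^\infty(\R^{N+1}_+)} \leq \|u\|_{L^\infty(\R^N)}$, completing the proof. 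The main technical obstacle is the careful bookkeeping in the Moser iteration, in particular justifying the chain rule for the non-smooth truncation $\min(|W|,L)^{\beta+1}$, ensuring admissibility of $\phi$ in the weighted Sobolev class $\mathcal{D}^{1,2}_{\Omega^c}(\R^{N+1}_+;t^{1-2s})$ at each step, and verifying that the monotone limit $L\to\infty$ and the telescoping product both converge; all of these are classical but must be adapted to the Muckenhoupt-weighted setting.
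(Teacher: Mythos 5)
Your argument is correct, and for the second half (the bound on $W$ in the bulk) it coincides with the paper's: both pass through the identification of $W$ with the Caffarelli--Silvestre extension of its trace and the fact that the Poisson kernel $t^{2s}(|x-y|^2+t^2)^{-(N+2s)/2}$ has $(x,t)$-independent integral. The difference is in the first half: the paper does not prove $\mathop{\rm Tr}W\in L^\infty(\Omega)$ at all, but quotes it from \cite[Theorem 3.1, Remark 3.2]{BP} and \cite{FranzinaPalatucci14}, where the Moser/De Giorgi iteration is run directly on the nonlocal quadratic form; you instead run the iteration upstairs in the weighted space $\mathcal{D}^{1,2}_{\Omega^c}(\R^{N+1}_+;t^{1-2s})$, using the test function $W\min(|W|,L)^{2\beta}$ and the trace--Sobolev inequality \eqref{eq:half_space_Sobolev} in place of the fractional Sobolev inequality. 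This buys a self-contained proof within the extension framework already set up in the paper, at the cost of the technical points you flag, of which the only one deserving explicit mention is admissibility of the test function: you need $F(W)\in\mathcal{D}^{1,2}_{\Omega^c}(\R^{N+1}_+;t^{1-2s})$ for Lipschitz $F$ with $F(0)=0$, which for a general bounded open $\Omega$ (not assumed Lipschitz) is most safely justified by approximating $W$ with functions in $C^\infty_c(\R^{N+1}_+\cup\Omega)$ and composing, rather than by arguing on the trace side, since for rough $\Omega$ the identification of $\mathcal D^{s,2}(\Omega)$ with the space of $H^s(\R^N)$-functions vanishing outside $\Omega$ may fail; the characterization \eqref{eq:15} then lets you conclude. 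Likewise, your appeal to "uniqueness in the extension theorem" is cleanest if phrased as: every zero-trace $\phi\in\mathcal{D}^{1,2}(\R^{N+1}_+;t^{1-2s})$ lies in $\mathcal{D}^{1,2}_{\Omega^c}(\R^{N+1}_+;t^{1-2s})$ by \eqref{eq:15}, so \eqref{eq:1} gives $q(W,\phi)=0$ for all such $\phi$, which together with $\mathop{\rm Tr}W=u$ forces $W=\mathcal H(u)$ and hence the Poisson representation of \cite{CaffarelliSilvestre07}. With these points made explicit the proof is complete; the iteration itself (starting from $\mathop{\rm Tr}W\in L^2(\Omega)$ via \eqref{eq:hardy_bdd}, with exponent ratio $2^*(s)/2>1$ and summable logarithms of the constants) is standard and correct.
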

\begin{proof}
The fact that $\mathop{\rm Tr}W\in L^\infty(\Omega)$ can be found in
\cite[Theorem 3.1, Remark 3.2]{BP}, see also
\cite{FranzinaPalatucci14}. Let us prove the statement about its
extension.
From the Poisson formula for problem \eqref{eq:7} given in
\cite{CaffarelliSilvestre07} we have that, for some constant
$C_{N,s}$,  
\[
W(x,t)=C_{N,s}\int_{\R^N}\frac{t^{2s}}{(|x-\xi|^2+t^2)^{\frac{N+2s}{2}}}\mathop{\rm
Tr}(W)(\xi)\,d\xi\quad\text{for all }(x,t)\in\R^{N+1}_+,
\]
hence
\begin{align*}
|W(x,t)|&\leq \|\mathop{\rm Tr}W\|_{L^\infty(\Omega)}
|C_{N,s}|\int_{\R^N}\frac{t^{2s}}{(|x-\xi|^2+t^2)^{\frac{N+2s}{2}}}\,d\xi\\
&=\|\mathop{\rm Tr}W\|_{L^\infty(\Omega)}
|C_{N,s}|\int_{\R^N}\frac{t^{2s}}{(|\xi|^2+t^2)^{\frac{N+2s}{2}}}\,d\xi\\
&=\|\mathop{\rm Tr}W\|_{L^\infty(\Omega)}
|C_{N,s}|\int_{\R^N}\frac{d\xi'}{(|\xi'|^2+1)^{\frac{N+2s}{2}}}\,d\xi'
\end{align*}
for all $(x,t)\in\R^{N+1}_+$, thus implying that $W\in L^\infty(\R^{N+1}_+)$ and completing the proof.
\end{proof}

\section{Fractional convergence of sets in the sense of Mosco}\label{sec:app_B}

We give the following definition which is the analogue of the standard sets convergence 
in the sense of Mosco (\cite{Mosco69}). 

\begin{definition}\label{def:fracmosco}
Let $\Omega\subset\R^N$ be a bounded open set.
Let $\{K_\varepsilon\}_{\varepsilon>0}$ be a family of compact sets contained in $\Omega$. 
We say that {\em $\{\Omega\setminus K_\eps\}_{\varepsilon>0}$ converges to $\Omega\setminus K$ in the fractional
  sense of Mosco} if the following two properties hold:
  \begin{enumerate}[(i)]
  \item the weak limit points in $\mathcal D^{s,2}(\R^N)$ of every
    family of functions
    $u_\eps\in \mathcal D^{s,2}(\Omega\setminus
    K_\eps)$
    belong to $\mathcal D^{s,2} (\Omega\setminus K)$;
  \item for every
    $u\in \mathcal D^{s,2} (\Omega\setminus K)$, there
    exists a family of functions
    $u_\eps\in \mathcal D^{s,2} (\Omega\setminus K_\eps)$ such that $u_\eps\to u$
    in $\mathcal D^{s,2}(\R^N)$.
\end{enumerate}
\end{definition}

In this appendix we prove that the notion of
  concentration introduced in Definition \ref{def:concentrates}
implies the convergence of
$\Omega\setminus K_\eps$ to $\Omega\setminus K$ in the fractional
sense of Mosco if
$\mathop{\rm Cap}\nolimits^s_\Omega(K)=0$. 

\begin{lemma}
  Let $\Omega\subset\R^N$ be a bounded open set and $K\subset\Omega$
  be a compact set with $\mathop{\rm Cap}\nolimits^s_\Omega(K)=0$.
  Let $\{K_\varepsilon\}_{\varepsilon>0}$ be a family of compact sets
  contained in $\Omega$ concentrating to $K$ 
 in the sense of Definition \ref{def:concentrates}. Then
  $\Omega\setminus K_\eps$ converges to $\Omega\setminus K$ in the
  fractional sense of Mosco as $\eps\to0^+$.
\end{lemma}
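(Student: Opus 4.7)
The plan is to verify the two conditions (i) and (ii) of Definition \ref{def:fracmosco} separately, treating (i) as a direct consequence of Proposition \ref{prop:cap0} and reserving the real work for (ii).

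For (i), I would observe that, since each $K_\eps \subset \Omega$, one has the inclusions $\mathcal D^{s,2}(\Omega\setminus K_\eps) \subset \mathcal D^{s,2}(\Omega)$ and, being closed, $\mathcal D^{s,2}(\Omega)$ is weakly closed in $\mathcal D^{s,2}(\R^N)$. Hence any weak limit of a family $u_\eps \in \mathcal D^{s,2}(\Omega\setminus K_\eps)$ must lie in $\mathcal D^{s,2}(\Omega)$. The hypothesis $\mathop{\rm Cap}\nolimits^s_\Omega(K) = 0$ combined with the equivalence (i)$\Leftrightarrow$(iii) of Proposition \ref{prop:cap0} yields $\mathcal D^{s,2}(\Omega) = \mathcal D^{s,2}(\Omega\setminus K)$, as required.

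The substantive part is (ii). First I would establish the auxiliary fact
\[
\lim_{\eps \to 0^+} \mathop{\rm Cap}\nolimits^s_\Omega(K_\eps) = 0.
\]
Indeed, for every $\delta > 0$, the hypothesis $\mathop{\rm Cap}\nolimits^s_\Omega(K)=0$ yields $\zeta \in C^\infty_c(\Omega)$ with $\zeta \equiv 1$ on an open neighborhood $\omega$ of $K$ and $\|\zeta\|^2_{\mathcal D^{s,2}(\Omega)} < \delta$; by Definition \ref{def:concentrates}, $K_\eps \subset \omega$ for $\eps$ small, so $\zeta$ is an admissible test function for $\mathop{\rm Cap}\nolimits^s_\Omega(K_\eps)$, giving $\limsup_{\eps\to 0^+} \mathop{\rm Cap}\nolimits^s_\Omega(K_\eps) \leq \delta$.

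Next, given $u \in \mathcal D^{s,2}(\Omega\setminus K) = \mathcal D^{s,2}(\Omega)$, I would pass to the Caffarelli--Silvestre extension $U = \mathcal H(u) \in \mathcal{D}^{1,2}_{\Omega^c}(\R^{N+1}_+;t^{1-2s})$, approximate $U$ by $U_n \in C^\infty_c(\R^{N+1}_+\cup\Omega)$ in the $\mathcal D^{1,2}$-norm (density), and choose cutoffs $\eta_\eps \in C^\infty_c(\R^{N+1}_+\cup\Omega)$ with $\eta_\eps \equiv 1$ in a neighborhood of $K_\eps$ and $q(\eta_\eps,\eta_\eps) \to 0$ (possible thanks to the decay of $\mathop{\rm Cap}\nolimits^s_\Omega(K_\eps)$). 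Setting
\[
W_{n,\eps} := U_n(1 - \eta_\eps) \in C^\infty_c(\R^{N+1}_+ \cup (\Omega \setminus K_\eps)),
\]
I obtain a candidate whose trace $\mathop{\rm Tr} W_{n,\eps} \in C^\infty_c(\Omega\setminus K_\eps) \subset \mathcal D^{s,2}(\Omega\setminus K_\eps)$. The same product estimate used in the proof of Proposition \ref{prop:cap0} --- exploiting $\|U_n\|_\infty, \|\nabla U_n\|_\infty < \infty$, the compact support of $U_n$, and the Hardy inequality \eqref{eq:8} --- gives $\|W_{n,\eps} - U_n\|_{\mathcal D^{1,2}(\R^{N+1}_+;t^{1-2s})} \to 0$ as $\eps \to 0^+$ for each fixed $n$.

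A standard diagonal extraction then produces, from the two-parameter family $W_{n,\eps}$, a one-parameter family $u_\eps := \mathop{\rm Tr} W_{n(\eps),\eps} \in \mathcal D^{s,2}(\Omega\setminus K_\eps)$ converging to $u$ in $\mathcal D^{s,2}(\R^N)$, by continuity of the trace map \eqref{eq:trace}. The main (mild) obstacle is arranging this diagonal simultaneously with the smooth approximation of $U$; once the estimate from Proposition \ref{prop:cap0} is invoked, the argument is routine.
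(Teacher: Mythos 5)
Your proposal is correct and follows essentially the same route as the paper: part (i) via the weak closedness of $\mathcal D^{s,2}(\Omega)$ together with Proposition \ref{prop:cap0}, and part (ii) via the Caffarelli--Silvestre extension, multiplication by small-energy cutoffs equal to $1$ near $K_\eps$ (which exist because $\mathop{\rm Cap}\nolimits^s_\Omega(K)=0$ and $K_\eps$ concentrates to $K$), the product estimate based on the Hardy inequality \eqref{eq:8}, and a diagonal extraction. The only cosmetic difference is that you isolate $\mathop{\rm Cap}\nolimits^s_\Omega(K_\eps)\to0$ as an explicit intermediate step, whereas the paper builds the cutoffs $\eta_n$ directly so that each works for all $\eps<\eps_n$.
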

\begin{proof}
  We first prove that condition (i) in Definition \ref{def:fracmosco}
  is satisfied. Let us consider a family 
  $\{u_\eps\}_{\eps>0}\subset \mathcal
    D^{s,2}(\Omega\setminus K_\eps)$
   such that $u_\eps \rightharpoonup u $ in
  $\mathcal D^{s,2}(\R^N)$. We need to show that
  $u\in \mathcal D^{s,2}(\Omega\setminus K)$. Obviously
  $\{u_\eps\}_{\eps>0}\subset \mathcal
    D^{s,2}(\Omega)$
  and $\mathcal D^{s,2}(\Omega)$ is a closed subspace
  of $\mathcal D^{s,2}(\R^N)$.  Then $u\in \mathcal D^{s,2}(\Omega)$ since this
  space is closed in the weak topology. Furthermore, being
  $\mathop{\rm Cap}\nolimits^s_\Omega(K)=0$, Proposition
  \ref{prop:cap0} provides $\mathcal D^{s,2}(\Omega)=\mathcal D^{s,2}(\Omega\setminus K)$.
 
  We now address item (ii) in Definition \ref{def:fracmosco}. Let
  $u\in \mathcal D^{s,2}(\Omega \setminus K)$ and $U=\mathcal{H}(u)$ be its
  Caffarelli-Silvestre extension as in \eqref{eq:13}. We need to exhibit a sequence
  $u_\eps$ in $\mathcal D^{s,2}(\Omega\setminus K_\eps)$ which converges to $u$
  in $\mathcal D^{s,2}(\R^N)$. We note that for every $\delta>0$ there
  exists $\varphi_\delta\in C^\infty_c(\R^{N+1}_+\cup\Omega)$ such
  that
 \[
  \nor{\varphi_\delta - U }_{\mathcal D^{1,2}(\R^{N+1}_+; t^{1-2s})} < \delta.
 \]
Since by assumption $\mathop{\rm Cap}\nolimits^s_\Omega(K)=0$, then for every $n\in\N$ there exists $\eps_n>0$ 
and $\eta_n\in C^\infty_c(\overline{\R^{N+1}_+})$ such that 
$\{\eps_n\}$ is strictly decreasing to zero,
$\eta_n\equiv 0 $ in $\R^N\setminus \Omega$, 
$\eta_n\equiv 1 $ in a neighborhood of $K_{\eps}$ for all $\eps\in(0,\eps_n)$ and 
\[
 \int_{\R^{N+1}_+} t^{1-2s} |\nabla \eta_n|^2\,dx\,dt < \frac1n. 
\]
Let us define $W_{n} := \varphi_{\delta}(1-\eta_n)$. We note that
$W_n \in \mathcal D^{1,2}_{\Omega^c \cup K_\eps}(\R^{N+1}_+;t^{1-2s})$
for all $\eps\in(0,\eps_n)$.
Then, using \eqref{eq:8} 
we obtain
\begin{align*}
  \int_{\R^{N+1}_+} t^{1-2s}& |\nabla W_n- \nabla \varphi_{\delta}|^2 \,dx\,dt
  = \int_{\R^{N+1}_+} t^{1-2s} |\nabla ( \varphi_{\delta}\eta_n)|^2\,dx\,dt \\
  &\leq 2\int_{\R^{N+1}_+} t^{1-2s} |\varphi_{\delta}|^2|\nabla \eta_n
    |^2\,dx\,dt + 
    2\int_{\R^{N+1}_+} t^{1-2s}|\eta_n|^2 |\nabla \varphi_{\delta} |^2\,dx\,dt\\
  &\leq \frac{2\sup|\varphi_\delta|^2}{n}+
    2\left(\sup|\nabla\varphi_\delta|^2\right)\left(\sup_{z\in\mathop{\rm supp
    \varphi_\delta}}|z|^2\right)
    \int_{\R^{N+1}_+} t^{1-2s}\frac{|\eta_n|^2}{|x|^2+t^2}\,dx\,dt\\
 &\leq \frac{2\sup|\varphi_\delta|^2}{n}+\frac{8}{n(N-2s)^2}
    \left(\sup|\nabla\varphi_\delta|^2\right)\left(\sup_{z\in\mathop{\rm supp
    \varphi_\delta}}|z|^2\right).
\end{align*}
Hence there exists $n_\delta$ such that  
\[
\| W_n- \varphi_{\delta} \|_{\mathcal D^{1,2}(\R^{N+1}_+;
  t^{1-2s})} < \delta\quad\text{for all }n\geq n_\delta.
\]
For all $\eps\in(0,\eps_1)$ we let $U_\eps:=W_n$ where $n$ is such
that $\eps_{n+1}\leq\eps<\eps_n$. The above argument then yields that
$U_\eps\in \mathcal D^{1,2}_{\Omega^c \cup
  K_\eps}(\R^{N+1}_+;t^{1-2s})$ and
\[
\| U_\eps- \varphi_{\delta} \|_{\mathcal D^{1,2}(\R^{N+1}_+;
  t^{1-2s})} < \delta\quad\text{for all }\eps\in(0,\eps_{n_\delta}).
\]
Hence 
$
\| U_\eps- U\|_{\mathcal D^{1,2}(\R^{N+1}_+;
  t^{1-2s})}\leq 
\| U_\eps- \varphi_{\delta} \|_{\mathcal D^{1,2}(\R^{N+1}_+;
  t^{1-2s})}
+\| \varphi_{\delta} -U\|_{\mathcal D^{1,2}(\R^{N+1}_+;
  t^{1-2s})}
 < 2\delta$ for all $\eps\in(0,\eps_{n_\delta})$.

 We conclude that
 $U_\eps\to U$ in $\mathcal D^{1,2}(\R^{N+1}_+;
  t^{1-2s})$ and therefore $u_\eps=\mathop{\rm Tr}U_\eps\in
  \mathcal D^{s,2}(\Omega\setminus K_\eps)$ converges to $u =\mathop{\rm Tr}U$ in
 $\mathcal D^{s,2}(\R^N)$ by continuity of the trace map \eqref{eq:trace}.
\end{proof}

\def\cprime{$'$}


\end{document}